\def\n{{\mathbf n}}
\def\bfn{{\boldsymbol n}}
\newtheorem{thm}{Theorem}
\newtheorem{lem}{Lemma}
\newtheorem{conj}{Conjecture}
\newtheorem{prop}{Proposition}
\numberwithin{equation}{section} \numberwithin{thm}{section}
\numberwithin{lem}{section} \numberwithin{problem}{section}
\numberwithin{cor}{section}
\newcommand{\norm}[1]{\left\Vert#1\right\Vert}
\begin{document}
\title{Mixed moments of the Riemann zeta function}
\author[Javier Pliego]{Javier Pliego}
\address{Department of Mathematics, KTH Royal Institute of Technology, Lindstedtsv\"agen 25,
10044 Stockholm, Sweden}

\email{javierpg@kth.se}
\subjclass[2010]{Primary 11M06; Secondary 11D75, 11J86}
\keywords{Riemann zeta function, moments of zeta, $abc$ conjecture, linear forms in logarithms}

\begin{abstract} We analyse a collection of mixed moments of the Riemann zeta function and establish the validity of asymptotic formulae. Such examinations are performed both unconditionally and under the assumption of a weaker version of the $abc$ conjecture.
\end{abstract}
\maketitle

\section{Introduction}
Investigations appertaining to the evaluation of moments of $L$-functions date back to Hardy-Littlewood \cite{Hard}, wherein an asymptotic formula for the second moment of the Riemann zeta function was established, it being followed by an analogous counterpart for the fourth moment due to Ingham \cite{Ing}. We thus define 
$$M_{k}(T)=\int_{0}^{T}\lvert \zeta(1/2+it)\rvert^{2k}dt$$ for $k\in\mathbb{N}$ and remark that subsequent work of numerous authors (see \cite{Bou6,Heath,Ivi,Zav}) sharpening the above results have led to formulae of the shape
\begin{equation}\label{Mkk}M_{k}(T)=TP_{k^{2}}(\log T)+O(T^{1-\delta})\ \ \ \ \ \ \ \ \ \ \end{equation}
for $k=1,2$, wherein $P_{k^{2}}(x)$ is a degree-$k^{2}$ polynomial and $0<\delta<1$ is a fixed constant. 

The extensive examinations concerning the asymptotic evaluation of higher moments have been on the contrary only conjectural, those being initiatied by Conrey-Ghosh \cite{Conr2} and Conrey-Gonek \cite{Conr3} in their papers analysing both the sixth and the eighth moment respectively. These were independently culminated with the incorporation of Random Matrix theory to the scene by Keating-Snaith \cite{Kea}, which delivered $M_{k}(T)\sim c_{k}T(\log T)^{k^{2}}$ for explicit constants $c_{k}$, such an especulation having been further refined in the work of Conrey et al. \cite{Conr} and taking the form (\ref{Mkk}) with $\delta=1/2-\varepsilon$ for $k\geq 3$. It also seems desirable to mention Soundararajan's \cite{Soun2} upper bound $M_{k}(T)\ll T(\log T)^{k^{2}+\varepsilon}$ conditional on the Riemann Hypothesis for all $k>0$, the $\varepsilon$ in the exponent ultimately being removed in a subsequent paper of Harper \cite{Harp}.

Little attention has been paid instead to the problem of analysing for $a,b,c \in\mathbb{R}^{+}$ and $T>1$ the integral
\begin{equation}\label{IABC}I_{a,b,c}(T)=\int_{0}^{T}\zeta(1/2+ait)\zeta(1/2-bit)\zeta(1/2-cit)dt.\end{equation} 
Investigating the above is partially motivated by the desire to examine a broader set of examples in search of similar phenomena to that occuring in a recent paper of Conrey-Keating (see \cite{Conr5}) in connection with the arithmetic stratification of subvarieties examined by Manin \cite{Man}. In the instance when the coefficients are integers then the error terms obtained may be substantially sharpened subjected to the validity of a weak version of the $abc$ conjecture (see \cite{mas}) that shall be presented shortly, the detailed discussion of which being deferred to a later point in the memoir.
\begin{conj}\label{conj11}
Let $\varepsilon>0$ and $a,b,c\in\mathbb{N}$ be fixed and $n_{1},n_{2},n_{3}\in\mathbb{N}$. Denote $$D=n_{1}^{a}-n_{2}^{b}n_{3}^{c}.$$ Then, if $D\neq 0$ one has the lower bound
$$\lvert D\rvert\gg n_{1}^{a-1-\varepsilon}n_{2}^{-1}n_{3}^{-1},$$ the implicit constant only depending on $\varepsilon$.
\end{conj}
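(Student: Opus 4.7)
\emph{Proof plan.} The natural route is to deduce the stated inequality from the full $abc$ conjecture of Masser--Oesterl\'e, which would justify describing Conjecture \ref{conj11} as a ``weak'' variant of it. One rewrites $n_1^a = n_2^b n_3^c + D$ as an additive $abc$-relation $A + B = C$ with $A = n_2^b n_3^c$, $B = D$ and $C = n_1^a$. Letting $d = \gcd(A, B)$, which automatically divides $C$, the reduced triple $(A/d, B/d, C/d)$ is pairwise coprime, and so the $abc$ conjecture applies directly to it.

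Using the elementary identity $\operatorname{rad}(n^k) = \operatorname{rad}(n)$ together with $\operatorname{rad}(X/d) \leq \operatorname{rad}(X)$ one has $\operatorname{rad}(A/d) \leq n_2 n_3$, $\operatorname{rad}(C/d) \leq n_1$ and $\operatorname{rad}(B/d) \leq |D|/d$, so that on invoking $abc$ with parameter $\eta > 0$ one obtains
$$\frac{n_1^a}{d} \ll_\eta \Bigl(\frac{n_1 n_2 n_3 |D|}{d}\Bigr)^{1+\eta}.$$
Since $d \geq 1$, the factor $d^{-\eta}$ may be discarded to produce $n_1^a \ll_\eta (n_1 n_2 n_3 |D|)^{1+\eta}$, and rearranging gives
$$|D| \gg_\eta n_1^{(a - 1 - \eta)/(1+\eta)} (n_2 n_3)^{-1}.$$
Choosing $\eta$ sufficiently small in terms of $\varepsilon$ so that $(a - 1 - \eta)/(1 + \eta) \geq a - 1 - \varepsilon$ would then deliver the conjectured lower bound.

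The genuine obstacle is of course that the $abc$ conjecture itself is open, which is why the statement has to be retained as a hypothesis. The only unconditional tool of the same arithmetic flavour is Baker's theory of linear forms in logarithms applied to the form $\Lambda = a \log n_1 - b \log n_2 - c \log n_3$; Matveev's explicit version furnishes only $|\Lambda| \gg \exp(-C(a, b, c) \log n_1 \log n_2 \log n_3)$, which, after translating via $|\Lambda| \asymp |D|/(n_2^b n_3^c)$ when $|D| \leq n_1^a/2$, yields a sub-polynomial lower bound for $|D|$ in the $n_i$ and falls dramatically short of what the statement demands. Bridging the gap between these two regimes is essentially tantamount to proving the $abc$ conjecture in this particular configuration, and no presently available technique in Diophantine approximation appears capable of doing so.
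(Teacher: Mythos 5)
Your argument is essentially the paper's: both clear the common gcd to produce a coprime triple (your $d=\gcd(A,B)$ coincides with the paper's $\lambda=\gcd(|D|,n_1^a,n_2^bn_3^c)$ since any prime power dividing two of $A,B,C$ divides the third), invoke the $abc$ conjecture, use $\operatorname{rad}(n^k)=\operatorname{rad}(n)$ to bound the radical of the product by $n_1n_2n_3|D|/d$, and then rescale the $abc$-parameter into $\varepsilon$. The proposal is correct and matches the paper's proof.
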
 Equipped with the above considerations we announce some of the main results of the present memoir.
\begin{thm}\label{thm601}
Let $a,b,c\in\mathbb{N}$ with the property that $(a,b,c)=1$ and $a\geq 2$. Then, whenever $a<c\leq b$ one has for large $T$ the asymptotic relation
\begin{equation*}I_{a,b,c}(T)=\sigma_{a,b,c}T+E_{a,b,c}(T),\end{equation*} where $\sigma_{a,b,c}>1$ is a computable constant defined in (\ref{sigmaa}) and
\begin{equation}\label{jodo}E_{a,b,c}(T)\ll T^{1-1/2a+1/2c}(\log T)^{\eta},\end{equation} wherein $\eta=1$ if $b=c$ and $\eta=0$ else. If Conjecture \ref{conj11} holds, the above error term may be refined to
\begin{align*}E_{a,b,c}(T)\ll&
T^{1/2+a/(a+c)+\varepsilon}+T^{3/4}(\log T).\end{align*}

\end{thm}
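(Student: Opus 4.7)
The strategy is to apply an approximate functional equation (AFE) to each of the three zeta factors in the integrand of (\ref{IABC}). With cut-offs of the shape $X_{\alpha}\asymp (\alpha t/2\pi)^{1/2}$ for $\alpha\in\{a,b,c\}$, one expresses each factor as a truncated direct Dirichlet series plus a truncated dual Dirichlet series carrying a $\chi$-factor, modulo a small remainder. Expanding the product yields $2^{3}=8$ pieces; the principal one (carrying no $\chi$-factor) produces
\[
\sum_{\substack{n_{1}\leq X_{a}\\ n_{2}\leq X_{b},\, n_{3}\leq X_{c}}} \frac{1}{\sqrt{n_{1}n_{2}n_{3}}}\int_{0}^{T}\left(\frac{n_{2}^{b}n_{3}^{c}}{n_{1}^{a}}\right)^{it}dt,
\]
whose diagonal contribution $n_{1}^{a}=n_{2}^{b}n_{3}^{c}$ delivers the main term $\sigma_{a,b,c}T$. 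Convergence of the diagonal series under the hypotheses $a\geq 2$, $a<c\leq b$, $(a,b,c)=1$ is checked by a standard prime-factorisation analysis of the solutions to the underlying Diophantine equation.

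For the off-diagonal terms of the principal piece I would use the bound $\bigl|\int_{0}^{T}f^{it}dt\bigr|\ll|\log f|^{-1}$ with $f=n_{2}^{b}n_{3}^{c}/n_{1}^{a}$. Setting $D=n_{1}^{a}-n_{2}^{b}n_{3}^{c}$, one has $|\log f|\gg|D|/n_{1}^{a}$ whenever $|D|\leq n_{1}^{a}/2$, while the complementary range $|D|>n_{1}^{a}/2$ (where $|\log f|$ is of constant size) contributes at most $O(T^{3/4}\log T)$ by the trivial box estimate on $\sum(n_{1}n_{2}n_{3})^{-1/2}$. For $|D|\leq n_{1}^{a}/2$ I would parametrise by $(n_{2},n_{3})$ and sum over integers $n_{1}$ close to $M=(n_{2}^{b}n_{3}^{c})^{1/a}$. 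Unconditionally one invokes $|D|\geq 1$; a dyadic decomposition over the scale of $M\leq X_{a}$ gives a sum over $(n_{2},n_{3})$ which converges when $b>c$ and is harmonic when $b=c$, producing (\ref{jodo}) after aggregation. Under Conjecture \ref{conj11}, the sharper lower bound $|D|\gg n_{1}^{a-1-\varepsilon}/(n_{2}n_{3})$ replaces $|D|\geq 1$, and an analogous summation yields the exponent $1/2+a/(a+c)+\varepsilon$ in the conditional error.

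The remaining seven pieces, each carrying at least one $\chi$-factor, are treated by the saddle-point method combined with the Stirling-type asymptotic $\chi(1/2+i\alpha t)\sim(\alpha t/2\pi)^{-i\alpha t}e^{i\alpha t+i\pi/4}$. For each piece one locates the unique stationary point (when it lies in $[0,T]$) of the combined phase, and standard stationary phase delivers a contribution of order $T^{1/2}$ per triple weighted by $(n_{1}n_{2}n_{3})^{-1/2}$ and an oscillatory exponential; summing over triples whose stationary point belongs to $[0,T]$ produces bounds absorbed into (\ref{jodo}) and its conditional refinement.

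The principal difficulty is the off-diagonal bookkeeping in the main piece: the exponent $1-1/(2a)+1/(2c)$ emerges from a delicate balance between the contribution $M^{a-1/2}/\sqrt{n_{2}n_{3}}$ produced by the integer $n_{1}$ adjacent to $M$ (saturating $|D|\geq 1$) and the cut-off constraint $n_{2}^{b}n_{3}^{c}\leq T^{a/2}$, with the asymmetry $a<c\leq b$ determining which of the two tail sums dominates and accounting for the logarithmic factor when $b=c$. A parallel challenge is to verify that the seven dual pieces — notably those dualising only a single factor, which do possess genuine stationary points — are dominated by this off-diagonal range, so that no piece exceeds the announced error.
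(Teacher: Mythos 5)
Your outline captures the broad architecture of the paper's proof — approximate functional equation, $2^3$ pieces, diagonal versus off-diagonal, stationary phase for the dual pieces — but it misses a structural cancellation that is load-bearing, especially for the conditional error term.

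When you write that the diagonal of the principal piece ``delivers the main term $\sigma_{a,b,c}T$,'' you are implicitly replacing $\int_{N_{\mathbf n}}^{T}dt=T-N_{\mathbf n}$ by $T$ and extending the truncated sum to all of $\mathbb{N}^{3}$. The discrepancy is not negligible: it produces two correction sums, in the paper's notation $J_{3}(T)=T\sum_{n_1^a=n_2^bn_3^c,\ \max>T_1}P_{\mathbf n}^{-1/2}$ (the tail of the series) and $J_{4}(T)=\sum_{\mathbf n\in\mathcal B_{a,b,c},\ n_1^a=n_2^bn_3^c}N_{\mathbf n}P_{\mathbf n}^{-1/2}$ (the boundary weight), so that the diagonal actually contributes $\sigma_{a,b,c}T-J_{3}(T)-J_{4}(T)$. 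You do not account for these terms, and they do not vanish from the scene by a direct estimate at the precision required.

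The paper's resolution is that the single dual piece with a genuine stationary point in $[N_{\mathbf n},T]$ — namely the piece $I_5$ obtained by dualising only the $\zeta(1/2+ait)$ factor, whose phase $F_5'(t)=-a\log t+\log(n_1^an_2^bn_3^c)+\text{const}$ vanishes at $c_{\mathbf n}=2\pi n_1n_2^{b/a}n_3^{c/a}/a$ — is not merely bounded but evaluated asymptotically by the Graham--Kolesnik stationary phase lemma, and its resulting main term splits as $K_1(T)+M_1(T)$ according to whether $n_2^{b/a}n_3^{c/a}$ is an integer. The integer part satisfies $K_1(T)=J_3(T)+J_4(T)+O(\cdots)$, so upon adding $I_1+I_5$ the $J_3+J_4$ terms cancel \emph{identically}, leaving only $M_1(T)$ plus the off-diagonal sum $J_{2,2}(T)$. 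Your proposal instead asserts that all seven dual pieces are ``absorbed into'' the error. For the unconditional bound this happens to be arithmetically defensible (one can check $3/4+a/4c\leq 1-1/2a+1/2c$ when $a\geq 2,\ a<c$), provided one separately shows $J_3,J_4$ are small enough, which you have not done. But for the conditional bound $T^{1/2+a/(a+c)+\varepsilon}+T^{3/4}\log T$ it is simply false: for instance $a=2,\ c=3$ gives $3/4+a/4c=11/12>1/2+a/(a+c)=9/10$, so a direct bound $I_5\ll T^{3/4+a/4c}$ would already exceed the claimed error. The conditional refinement requires the cancellation to eliminate the contribution at scale $T^{3/4+a/4c}$, leaving only $M_1(T)$, which is then estimated using Conjecture~\ref{conj11} via the inequality $\|n_2^{b/a}n_3^{c/a}\|^{-1}\ll(n_2n_3)^{1+\varepsilon}$.

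A secondary imprecision: treating \emph{all} seven dual pieces by the saddle-point method is overkill, and in fact the phases of the pieces other than $I_5$ are monotone on the relevant range (e.g.\ for $I_2$, since $r\in\{b,c\}>a=2\theta a$ with $\theta=1/2$, one has $F_2'\geq 0$ on $[N_{\mathbf n,\theta},T]$); Titchmarsh's first- and second-derivative tests suffice there and yield $O(T^{3/4})$ directly. This is not an error, but it obscures why $I_5$ is special and why its asymptotic evaluation — rather than a bound — is indispensable.
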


It seems worth noting that the instance $(a,b,c)>1$ may be easily reduced to the above setting via a change of variables. We also remark that \cite{Bet2} yields $M_{3/2}(T)\asymp T(\log T)^{9/4}$ unconditionally, it thereby transpiring that in the context underlying Theorem \ref{thm601} further cancellation is exhibited. The relative simplicity of the off-diagonal analysis when $a=1$ enables one to refine the above result in this context and derive an analogous unconditional asymptotic formula.
\begin{thm}\label{prop3kr}
Let $b,c\in\mathbb{N}$ satisfying $1<c\leq b$. Then one has
$$I_{1,b,c}(T)=\zeta\big((1+b)/2\big)\zeta\big((1+c)/2\big)T+O\big(T^{3/4}(\log T)^{2}+T^{1/2+1/2b+\varepsilon}\big).$$ 
\end{thm}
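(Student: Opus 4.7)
The plan is to approximate each of the three zeta factors by Dirichlet polynomials via the approximate functional equation and Euler--Maclaurin summation, and then to evaluate the resulting triple Dirichlet sum by separating the diagonal $n=m^b k^c$ from the off-diagonal. I apply Euler--Maclaurin with $N=T$ to $\zeta(1/2+it)$, producing a Dirichlet polynomial of length $T$ plus a polar term of size $T^{1/2}/|t|$. For $\zeta(1/2-bit)$ and $\zeta(1/2-cit)$ I use the AFE with main polynomial lengths $T^{1/b}$ and $T^{1/c}$, dual lengths $T^{1-1/b}$ and $T^{1-1/c}$ (respecting $XY\asymp t/(2\pi)$), and error $O(t^{-1/4})$. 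The choice of $T^{1/b}$ is deliberate: the tail $\sum_{m>T^{1/b}}m^{-(1+b)/2}\ll T^{-(b-1)/(2b)}$, once multiplied by $T$, produces exactly $T^{1/2+1/(2b)}$, matching the error term announced.

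After expansion the principal contribution is the triple \emph{main/main/main} term
\[
\sum_{n\leq T,\,m\leq T^{1/b},\,k\leq T^{1/c}} \frac{1}{\sqrt{nmk}}\int_0^T e^{it(b\log m+c\log k-\log n)}\,dt.
\]
Its diagonal $n=m^b k^c$ yields $T\sum m^{-(1+b)/2}k^{-(1+c)/2}$ over admissible $(m,k)$, and completion to $\zeta((1+b)/2)\zeta((1+c)/2)$ introduces tails $\ll T^{1/2+1/(2b)}+T^{1/2+1/(2c)}$, the second being $\leq T^{3/4}$ since $c\geq 2$. For the off-diagonal, the crucial input specific to $a=1$ is that both $n$ and $m^b k^c$ are positive integers, so $n\neq m^b k^c$ forces $|n-m^b k^c|\geq 1$ and hence $|\log(m^b k^c/n)|\geq 1/\max(n,m^b k^c)$. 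Combining with $\left|\int_0^T e^{it\alpha}\,dt\right|\ll\min(T,1/|\alpha|)$ and splitting according to the relative sizes of $n$ and $M=m^b k^c$, a case analysis together with the weighted count $\sum_{m^b k^c\leq T} m^{(b-1)/2}k^{(c-1)/2}\ll T^{1/2+1/(2c)}$ (with an extra $\log T$ factor when $b=c$) yields the off-diagonal bound $O(T^{3/4}(\log T)^2)$.

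The main obstacle is to control the contributions of the reflected AFE pieces (whose $\chi$-factors carry phases of the form $bt\log(bt/2\pi e)$ and $ct\log(ct/2\pi e)$), the Euler--Maclaurin polar term, and the off-diagonal terms in the regime $M>T$, where the trivial bound $1/|\alpha|$ is too weak to be summed over the available ranges. Stationary-phase analysis of the $\chi$-weighted cross integrals should show that generically no stationary point lies in $[0,T]$, keeping these pieces within the error budget; the more delicate off-diagonal terms with $M>T$ are handled either by integration by parts on the reflected sums or by a mean-value theorem for Dirichlet polynomials, which extracts the cancellation that the triangle inequality misses. Verifying that every remaining contribution fits inside $T^{3/4}(\log T)^2+T^{1/2+1/(2b)+\varepsilon}$ completes the proof.
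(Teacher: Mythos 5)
Your plan departs fundamentally from the paper's. The paper uses the balanced approximate functional equation (Titchmarsh (4.12.4)) with Dirichlet polynomials of length $(t/2\pi)^{1/2}$ for \emph{all three} factors; expanding the product gives six cross-terms $I_1,\ldots,I_6$, the main one $I_1$ producing a diagonal $\sigma_{1,b,c}T-J_3(T)-J_4(T)$ plus a tractable off-diagonal, and the crucial step is a stationary-phase evaluation (Graham--Kolesnik) of the $\chi$-twisted piece $I_5$, whose main terms regenerate $J_3(T)+J_4(T)$ and cancel the boundary corrections; here $a=1$ makes the stationary-phase main term $M_1(T)$ vanish identically (Lemma 6.4), which is what yields the unconditional result. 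Your scheme instead takes a long polynomial of length $T$ for the first factor (via Euler--Maclaurin) and unbalanced AFE lengths $T^{1/b}$, $T^{1/c}$ for the other two. This variant of the ``long Dirichlet polynomial'' route is explicitly discussed and dismissed in the paper's introduction, and I see two concrete gaps. First, your claim that the AFE with main length $(t/2\pi)^{1/b}$ carries error $O(t^{-1/4})$ is incorrect: for $\theta=1/b<1/2$ the error is $O(t^{-\theta/2})=O(t^{-1/(2b)})$, and integrating against the other two zeta factors via Cauchy--Schwarz gives a contribution $\gg T^{1-1/(2b)}\log T$, which already exceeds $T^{3/4}(\log T)^2$ for every $b\geq 3$. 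Second, your off-diagonal analysis only treats the near-diagonal $n\asymp m^bk^c$; the \emph{far} off-diagonal (where $|\log(m^bk^c/n)|\gg 1$, so the integral is $O(1)$) is trivially bounded by $\sum_{n\leq T}n^{-1/2}\sum_{m\leq T^{1/b}}m^{-1/2}\sum_{k\leq T^{1/c}}k^{-1/2}\asymp T^{1/2+1/(2b)+1/(2c)}$, which for $b=c=2$ is of size $T$ --- the same order as the main term. You flag this and gesture at ``integration by parts on the reflected sums or a mean-value theorem for Dirichlet polynomials,'' but no such bound is exhibited; the $L^\infty\cdot L^2\cdot L^2$ Hölder route gives $T^{1+1/(2b)}$, still far too large, and there is no obvious source of cancellation because after integrating in $t$ the factor $1/\log(m^bk^c/n)$ does not oscillate. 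These two obstructions are complementary (one fatal for small $b$, the other for large $b$), so no choice of cut-offs salvages the strategy. Your correct observation --- that $a=1$ forces $|n-m^bk^c|\geq 1$ and hence a clean spacing bound --- is indeed the right key insight and matches equation (3.8) of the paper, but it needs to be deployed inside the balanced $t^{1/2}$-AFE framework, where the trivial off-diagonal bound is already $T^{3/4}$, in order to close the argument.
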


The starting point to prove the above theorems shall make use of the approximate functional equation of the Riemman zeta function (see Titchmarsh \cite[(4.12.4)]{Tit}), namely
\begin{equation}\label{approxim}\zeta(1/2+it)=D_{1/2}(1/2+it)+\chi(1/2+it)D_{1/2}(1/2-it)+O(t^{-1/4}),\end{equation}
wherein \begin{equation}\label{chichi}\chi(s)=2^{s-1}\pi^{s}\sec(s\pi/2)\Gamma(s)^{-1},\ \ \ \ \ \ \ \ \ \ s\in\mathbb{C}\setminus (2\mathbb{Z}+1)\end{equation} 
and for $\theta>0$ the corresponding Dirichlet polynomial is defined by
\begin{equation}\label{diri}D_{\theta}(s)=\sum_{n\leq (t/2\pi)^{\theta}}n^{-s}\ \ \ \ \ \ \ \ \ \ s=\sigma+it,\end{equation} it therefore entailing the necessity of examining several integrals of twisted Dirichlet polynomials. In the investigation of the term with no twisting factor, one then makes a distinction between the diagonal and the off-diagonal contribution, the latter being substantially more problematic. Having a decent control of such an object then amounts to understanding the number of solutions of the equation
\begin{equation}\label{D1D2}n_{2}^{b}n_{3}^{c}-n_{1}^{a}=D\end{equation} for $D\in\mathbb{Z}$, the assumption of Conjecture \ref{conj11} delivering the lower bound \begin{equation}\label{D1D222}\lvert D\rvert\gg n_{1}^{a-1-\varepsilon}(n_{2}n_{3})^{-1}.\end{equation}

 Such a robust estimate improves the unconditional error terms derived, but does not suffice to enlarge the range of the parameters $a,b,c$ for which one may assure the validity of the asymptotic formula. It seems pertinent to mention that Sprindzuk \cite{Spr1,Spr2} on improving upon work of Baker \cite{Bak}, showed that whenever $n,m\in\mathbb{N}$ are fixed then one has $$\lvert x^{n}-y^{m}\rvert\gg (\log X)^{\delta(m,n)},\ \ \ \text{where $X=\max(x^{n},y^{m})$}$$ for some fixed $\delta(m,n)>0$. The above inequality lends credibility to the expectation that one may deduce unconditional estimates of the same strength for the difference $D$ in (\ref{D1D2}), but probably not better ones, the conditional bound (\ref{D1D222}) thereby seeming completely out of reach and illustrating the difficulty of the problem at hand. 

We surmount the difficulties associated with such an analysis by employing an argument only valid for the range of parameters $a<c\leq b$ that delivers an error term which may be refined if one assumes Conjecture \ref{conj11}. The perusal of the rest of the integrals containing twisting factors comprises estimates for oscillatory integrals. In the setting underlying Theorems \ref{thm601} and \ref{prop3kr} though there is an additional integral whose examination necessitates utilising a stationary phase method lemma of the strength of that of Graham and Kolesnik \cite[Lemma 3.4]{Gra} for the purpose of getting sharper error terms by exploiting some cancellation with other terms stemming from the diagonal contribution.

The arguments employed in the course of the proof herein involve an apparent unavoidable implicit use of the convexity bound, similar approaches to deliver asymptotic formulae for (\ref{IABC}) but with four zeta factors not being of sufficient robustness. The reader may have also wondered about the possibility of approximating each of the zeta factors instead by long Dirichlet polynomials, say of length $CT$ for some constant $C>0$. Such an avenue transports one to the problem of having to accurately analyse sums of the type
$$\sum_{\substack{n_{1},n_{2},n_{3}\leq CT}}(n_{1}n_{2}n_{3})^{-1/2}\log (n_{2}^{b}n_{3}^{c}/n_{1}^{a})^{-1}e^{iT\log (n_{2}^{b}n_{3}^{c}/n_{1}^{a})}$$ stemming from the off-diagonal contribution. If $\log (n_{2}^{b}n_{3}^{c}/n_{1}^{a})\asymp 1$, say, then when fixing $n_{2},n_{3}$ there are certain choices of $n_{1}$ having the property that $T/n_{1}$ has a small fractional part, whence the sum over integers not too far from such choices does not exhibit extra cancellation and provides a contribution of size $T^{3/2}$. Moreover, as outlined in (\ref{D1D2}), the poor understanding of the logarithm in the preceding equation precludes one from exploiting symmetries to estimate the above sum and makes the above approach not practicable. 

On a different note, we include a theorem concerning the asymptotic evalution of (\ref{IABC}) whenever $a=c<b$, there being a different behaviour underlying the anticipated formula.
\begin{thm}\label{thm602}
Let $a<b$ be positive integers satisfying $(a,b)=1$. Then one has that
\begin{equation*}I_{a,b,a}(T)\sim\zeta\big((a+b)/2\big)T\log T.\end{equation*}
\end{thm}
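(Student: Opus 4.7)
\emph{Proof plan.} My strategy parallels Hardy--Littlewood's treatment of the second moment of $\zeta$, adapted to this mixed setting. I would first apply the approximate functional equation (\ref{approxim}) to each of the three factors $\zeta(1/2+ait)$, $\zeta(1/2-bit)$, $\zeta(1/2-ait)$, writing each as $A_j(t)+\chi_j B_j(t)+O(t^{-1/4})$ where $\chi_j=\chi(1/2+i\alpha_j t)$ and $A_j,B_j$ are the two Dirichlet polynomials of length $(|\alpha_j|t/2\pi)^{1/2}$, with $\alpha_1=a$, $\alpha_2=-b$, $\alpha_3=-a$. Expanding the product produces eight principal products plus an error whose integral over $[0,T]$ is $O(T^{3/4})$.

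Exactly two of these eight products contribute to the main asymptotic. The first is the untwisted piece $A_1A_2A_3=\sum(n_1n_2n_3)^{-1/2}(n_2^bn_3^a/n_1^a)^{it}$, whose diagonal $n_1^a=n_2^bn_3^a$, combined with $(a,b)=1$, forces via prime-valuation analysis the parameterisation $n_1=m^bn_3$, $n_2=m^a$ for positive integers $m,n_3$. The second is $\chi_1\chi_3 B_1A_2B_3$: the identity $\chi(s)\chi(1-s)=1$ yields $\chi_1\chi_3=\chi(1/2+ait)\chi(1/2-ait)=1$, reducing this product to $B_1A_2B_3=\sum(n_1n_2n_3)^{-1/2}(n_1^an_2^b/n_3^a)^{it}$ with symmetric diagonal $n_3=m^bn_1$, $n_2=m^a$. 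In both cases the diagonal sum, subject to the truncations $m^bn_3\leq(at/2\pi)^{1/2}$ and $m^a\leq(bt/2\pi)^{1/2}$, evaluates to
\[
\sum_{m\geq 1}m^{-(a+b)/2}\Big(\tfrac{1}{2}\log(t/2\pi)-b\log m+O(1)\Big)=\tfrac{1}{2}\zeta\big((a+b)/2\big)\log t+O(1),
\]
exploiting $(a+b)/2\geq 3/2>1$ so that $\sum_m m^{-(a+b)/2}(1+\log m)$ converges. Integrating over $t\in[T_0,T]$ then delivers $\tfrac{1}{2}\zeta((a+b)/2)T\log T+O(T)$ from each of the two diagonals, which sum to the claimed main term.

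It remains to bound all other contributions by $o(T\log T)$. The off-diagonal portions of $A_1A_2A_3$ and $\chi_1\chi_3B_1A_2B_3$ are controlled by $\int_0^T e^{it\log\rho}dt\ll\min(T,1/|\log\rho|)$ summed against $(n_1n_2n_3)^{-1/2}$ with $\rho\neq 1$, yielding $O(T)$ after splitting according to whether $|\log\rho|$ is large or small, a step parallel to (but much cruder than) the analysis of (\ref{D1D2}) required for Theorem \ref{thm601}. Each of the six remaining twisted products carries a nontrivial combined Stirling factor of shape $(Ct)^{i\gamma t}e^{-i\delta t}$ with $\gamma\neq 0$ stemming from the surviving $\chi_j$'s; this $t$-oscillation never resonates with the polynomial phase from the Dirichlet sums, so the inner $t$-integral is controlled by a stationary-phase argument of the strength of Graham--Kolesnik \cite[Lemma 3.4]{Gra}, yielding $O(T^{1-\delta})$ per product for some $\delta>0$ after crude summation over $n_1,n_2,n_3$. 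The main obstacle will be verifying case by case that the Stirling and Dirichlet phases do not combine to create a spurious secondary diagonal; the hypothesis $a<b$ together with $(a,b)=1$ is precisely what prevents this and secures the saving.
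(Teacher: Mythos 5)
Your decomposition and identification of the two main-term products are correct: after expanding the approximate functional equation, the untwisted product $A_1A_2A_3$ and the product $\chi_1\chi_3B_1A_2B_3$ (where $\chi_1\chi_3=1$) each carry a diagonal $n_1^a=n_2^bn_3^a$ (respectively $n_3^a=n_1^an_2^b$), and the prime-valuation parametrisation $n_1=m^bn_3$, $n_2=m^a$ forced by $(a,b)=1$ is exactly what the paper uses (with $S_{a,b}(T)$ in Section \ref{sec607}); each diagonal yields $\tfrac12\zeta((a+b)/2)T\log T+O(T)$, and your treatment of the six genuinely twisted products via first-derivative/stationary-phase bounds matches the paper's Lemmata \ref{lem604}, \ref{lem6054} and \ref{lem606}. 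That part of the plan is sound.

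The genuine gap is in the off-diagonal of the two untwisted pieces. You claim this is ``$O(T)$ after splitting according to whether $|\log\rho|$ is large or small, a step parallel to \ldots Theorem \ref{thm601}.'' This is false in general, and the step from Theorem \ref{thm601} (Lemma \ref{prop478}) cannot be borrowed since it requires $a<c$ strictly, whereas here $c=a$. The danger comes from the terms with $n_1=N_{b,a}=[n_2^{b/a}n_3]$ when $a\geq2$ and $n_2^{b/a}$ is irrational: the only elementary input is the integer spacing $|n_2^bn_3^a-N_{b,a}^a|\geq1$, which gives $|\log\rho|^{-1}\ll n_2^bn_3^a$, and summing $\sum n_2^{-1/2-b/2a}n_3^{-1}\min(T,\,n_2^bn_3^a)$ produces, from the range $T^{1/a}<n_3\ll\sqrt{T}$ (nonempty once $a\geq3$), a contribution of size $T\sum_{T^{1/a}<n_3\ll\sqrt{T}}n_3^{-1}\asymp T\log T$ --- the \emph{same order as the main term}, so the asymptotic is not established. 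The paper's essential new input (Lemma \ref{rothi}) is Roth's theorem: $|n_2^{b/a}n_3-N_{b,a}|\geq C(\varepsilon,n_2)\,n_3^{-1-\varepsilon}$, which yields $J_2(T)=o(T\log T)$ after a careful limiting argument in $N$ and $\varepsilon$. This is not a technicality one can wave away; it is the reason the theorem is stated with $\sim$ rather than an explicit error term, since the ineffectiveness of the constant $C(\varepsilon,n_2)$ in Roth's theorem makes the $o(T\log T)$ ineffective. As written, your proposal contains no diophantine-approximation input, so the proof does not close for $a\geq3$, and the claimed $O(T)$ saving is unjustified even for $a=2$ without the additional case analysis that shows the dangerous range is empty there.
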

The examination of the off-diagonal contribution in this context departs from the analysis in the previous setting and exhibits some novelty, the estimates obtained having their reliance on an application of Roth's theorem in diophantine approximation \cite{Roth}. The ineffectiveness of the error term in the asymptotic formula at hand then stems from the ineffectiveness in such a theorem. Moreover, both the length $CT^{1/2}$ of the Dirichlet polynomials involved in (\ref{approxim}) and the corresponding exponent of $2+\varepsilon$ in the alluded theorem shall play a crucial role in the argument, to the extent that analogous inequalities for algebraic $\alpha\in\mathbb{R}$ of the shape
$$\lvert \alpha-a/q\rvert\geq \frac{C(\alpha)}{q^{2+\delta(\alpha)}},\ \ \ \ a\in\mathbb{Z}, q\in\mathbb{N}, \ \ \ \ \ \  C(\alpha),\delta(\alpha)>0,$$ 
containing effective information about the corresponding constant thereof at the cost of increasing the aforementioned exponent at hand (see \cite{Ben,Bomb,Fel}) shall not find success when applied in this setting. It is also noteworthy that such an application of Roth's theorem ultimately leads to a conclusion of the same strength than what would have been delivered under the assumption of Conjecture \ref{conj11}.

Much in the same vein as in the context underlying Theorem \ref{prop3kr}, the off-diagonal analysis is simpler when $a=1$ and does not require an appeal to results in diophantine approximation, such an alleviation enabling one to give account of lower order terms unconditionally. 
\begin{thm}\label{loweuse}
Let $b\in\mathbb{N}$ with $b>1$. Then with the above notation one has that
$$I_{1,b,1}(T)=\zeta\big((1+b)/2\big)T\log T+\nu_{b}T+O\big(T^{3/4}\log T\big)$$ if $b>2$, wherein $\nu_{b}\in\mathbb{R}$ is an explicit constant that shall be defined in (\ref{nunu}). When $b=2$ the same formula holds with an error term $O(T^{3/4+\varepsilon}).$ If instead $b=1$ then 
$$I_{1,1,1}(T)=\frac{1}{2}T(\log T)^{2}+c_{1}T\log T+c_{0}T+O\big(T^{3/4}\log T\big),$$ wherein the constants $c_{0},c_{1}\in\mathbb{R}$ will be made explicit in (\ref{c1c2c3}).
\end{thm}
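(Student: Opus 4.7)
The plan is to apply the approximate functional equation \eqref{approxim} to each of the three zeta factors in $I_{1,b,1}(T)$. Writing $\zeta(1/2+it)=A_{1}+\chi_{1}B_{1}+O(t^{-1/4})$ and, by complex conjugation, the analogous identities for $\zeta(1/2-it)$ and $\zeta(1/2-bit)$, one expands the triple product into $2^{3}=8$ main pieces plus error contributions from the $O(t^{-1/4})$ remainders; the latter yield $O(T^{3/4+\varepsilon})$ after applying Cauchy--Schwarz against the standard mean-value estimate for the second moment of $\zeta$.

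The first step is to analyse the "straight" term $\int_{0}^{T}A_{1}A_{2}A_{3}\,dt$, whose integrand expands as $\sum (n_{1}n_{2}n_{3})^{-1/2}(n_{2}^{b}n_{3}/n_{1})^{it}$. On the diagonal $n_{1}=n_{2}^{b}n_{3}$ the summand becomes $n_{2}^{-(b+1)/2}n_{3}^{-1}$; for $b>1$ the $n_{2}$-sum converges to $\zeta((1+b)/2)$ and the $n_{3}$-sum up to $X(t)=(t/2\pi)^{1/2}$ gives $\log X(t)+\gamma+o(1)$, which after integration in $t$ produces $\zeta((1+b)/2)T\log T$ together with an explicit $T$-order contribution to $\nu_{b}$. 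When $b=1$ the diagonal reduces to $\sum_{m\le X(t)}d(m)/m$, and Dirichlet's hyperbola formula supplies (upon integration) one piece of the $\frac{1}{2}T(\log T)^{2}$ leading term as well as $T\log T$ and $T$ corrections feeding into $c_{1},c_{0}$. The off-diagonal contribution exploits that $a=1$ forces $|n_{2}^{b}n_{3}-n_{1}|\ge 1$, whence $|\log(n_{2}^{b}n_{3}/n_{1})|\gg 1/\max(n_{1},n_{2}^{b}n_{3})$; combining this with $|\int_{0}^{T}e^{it\log r}\,dt|\ll \min(T,1/|\log r|)$ and partial summation on the moving cutoff produces the error $O(T^{3/4}\log T)$, weakened to $O(T^{3/4+\varepsilon})$ in the borderline case $b=2$ in which the sum $\sum n_{2}^{-3/2}$ sits at the boundary of absolute convergence.

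The remaining seven terms each carry at least one $\chi$-factor. Substituting Stirling's asymptotic $\chi(1/2+it)=(2\pi/t)^{it}e^{i(t+\pi/4)}(1+O(1/t))$ converts each into an oscillatory integral with a unique stationary point $t_{0}=2\pi n_{1}^{\pm}n_{2}^{\pm b}n_{3}^{\pm}$. Because $t_{0}/(2\pi)$ is an integer, the oscillation at the critical point collapses to a factor $e^{\pm i\pi/4}$, and a stationary-phase lemma of Graham--Kolesnik strength \cite[Lemma 3.4]{Gra} reduces each integral to an explicit arithmetic sum supported on the region where $t_{0}\in[0,T]$. Crucially, since $\chi(1/2+it)\chi(1/2-it)=1$, the term $\chi_{1}B_{1}A_{2}\chi_{3}B_{3}$ is in fact non-oscillatory and yields a further diagonal combining with the straight term; for $b=1$ the coincidence $\chi_{2}=\chi_{3}$ additionally renders $\chi_{1}B_{1}\chi_{2}B_{2}A_{3}$ non-oscillatory, contributing yet another diagonal piece to the leading $\frac{1}{2}T(\log T)^{2}$. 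The stationary-phase evaluations of the genuinely oscillatory terms then supply the residual $T$-order corrections that combine with the diagonals to yield $\nu_{b}$ (or $c_{0},c_{1}$ for $b=1$).

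The main obstacle will be the precise book-keeping for the $T$-order coefficient: the diagonal contributes $T$-terms involving $\gamma$ and $\log 2\pi$ arising from the moving cutoff $X(t)=(t/2\pi)^{1/2}$, while each stationary-phase contribution is perturbed by the Stirling remainder $(1+O(1/t))$ and by the geometry of the constraint region $\{n_{i}\le X(t_{0})\}$ evaluated at the critical point. Assembling all of these precisely into the explicit constants $\nu_{b}$ defined in \eqref{nunu} (for $b>2$) and $c_{0},c_{1}$ defined in \eqref{c1c2c3} (for $b=1$) is where the substantive technical content of the argument resides; the case $b=1$ in particular requires tracking an additional logarithm throughout the stationary-phase expansion in order to identify $c_{1}$.
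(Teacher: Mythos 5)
Your proposal follows the paper's proof essentially verbatim: expand via the approximate functional equation into eight pieces (the paper's $I_1,\dots,I_6$), analyse the straight-term diagonal and its off-diagonal (which collapses cleanly because $a=1$ forces $|n_2^b n_3 - n_1|\geq 1$), observe that $\chi(1/2+it)\chi(1/2-it)=1$ converts $\chi_1 B_1 A_2 \chi_3 B_3$ (and, when $b=1$, also $\chi_1 B_1 \chi_2 B_2 A_3$) into a further copy of the straight integral, apply a Graham--Kolesnik stationary-phase evaluation to the remaining critical piece $\chi_1 B_1 A_2 A_3$ (the paper's $I_5$) whose stationary point $c_{\bfn}=2\pi n_1 n_2^b n_3$ is an integer multiple of $2\pi$ so that the oscillation indeed collapses and a divisor-type sum ($\sum_{n\leq T_1} d_3(n)$ when $b=1$) emerges, and finally track the cancellation of the diagonal pieces $S_{1,b}(T)T$ and $J_4(T)$ against the analogous terms in the stationary-phase expansion to extract $\nu_b$ (resp.\ $c_0,c_1$). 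One small correction to your sketch: the integrality of $t_0/(2\pi)$ holds only for that single critical piece, not for the other six $\chi$-bearing terms ($I_2$, $I_3$, $I_4$, $Y_{6,b,1}$), whose phase derivatives are in fact bounded away from zero on $[N_{\bfn},T]$ so that Titchmarsh's first and second derivative tests already give $O(T^{3/4}\log T)$ without any stationary-phase analysis, and the $\varepsilon$-loss in the $b=2$ error comes from the divisor bound $d(n)\ll n^{\varepsilon}$ inside the endpoint contribution $E_2(T)$ rather than from boundary convergence of $\sum n_2^{-3/2}$.
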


On another note, one would expect that the functions $$\zeta(1/2+ait),\ \ \ \zeta(1/2+ib t),\ \ \ \zeta(1/2+ic t),\ \ \ \ \ \ a,b,c\in\mathbb{R}$$ with $a,b,c$ being linearly independent over $\mathbb{Q}$, are poorly correlated. Such a consideration may then lend credibility to the belief that the integral (\ref{IABC}) should exhibit substantially more cancellation in the case when the corresponding coefficients satisfy the preceding proviso. Confirming and quantifying this belief in the algebraic setting is, inter alia, the purpose of the upcoming theorem.

\begin{thm}\label{thm6025}
Let $a,b,c\in\mathbb{R}^{+}$ be algebraic numbers linearly independent over $\mathbb{Q}$ for which $a<c\leq b$. Then there is some effective constant $K_{a,b,c}>0$ such that
$$I_{a,b,c}(T)= T+O\big(Te^{-K_{a,b,c}(\log T)^{1/3}/(\log\log T)^{1/3}}\big).$$ 
\end{thm}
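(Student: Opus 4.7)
The overall strategy mirrors that of Theorems \ref{thm601} and \ref{prop3kr}: one applies the approximate functional equation (\ref{approxim}) to each of the three zeta factors, expands the product into $2^{3}=8$ Dirichlet--polynomial products (possibly twisted by $\chi$ factors) plus an error which, combined with the mean-square estimate $\int_{0}^{T}|\zeta(1/2+it)|^{2}\,dt\ll T\log T$, integrates to $O(T^{3/4}\log T)$ and is comfortably absorbed into the target error term. One then analyses each of the eight main pieces separately.

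The novel feature is the identification of the main term. For the untwisted piece $\int_{0}^{T}D_{1/2}(1/2+iat)D_{1/2}(1/2-ibt)D_{1/2}(1/2-ict)\,dt$, expanding as a triple sum and interchanging with the integral, the diagonal consists of $(n_{1},n_{2},n_{3})\in\mathbb{N}^{3}$ satisfying
\[
\Lambda(n_{1},n_{2},n_{3}):=-a\log n_{1}+b\log n_{2}+c\log n_{3}=0.
\]
Writing the $\log n_{i}$ with respect to a $\mathbb{Q}$-basis of the $\mathbb{Q}$-span of the logarithms of the primes dividing $n_{1}n_{2}n_{3}$, Baker's theorem on linear forms in logarithms guarantees that this basis is also $\overline{\mathbb{Q}}$-linearly independent, whence the vanishing of $\Lambda$ translates into several $\mathbb{Q}$-linear relations among $a,b,c$; the hypothesis that $a,b,c$ are $\mathbb{Q}$-linearly independent then forces $n_{1}=n_{2}=n_{3}=1$. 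This yields the main term $T$.

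For the off-diagonal contribution, the central input is an effective Baker--Feldman--Matveev lower bound of the shape
\[
|\Lambda(n_{1},n_{2},n_{3})|\geq\exp\bigl(-C(a,b,c)(\log H)^{\alpha}(\log\log H)^{\beta}\bigr),\qquad H=\max(n_{1},n_{2},n_{3}),
\]
with computable exponents $\alpha,\beta$ coming from the sharp linear-forms-in-logarithms estimate for three logarithms with algebraic coefficients. Combining this with the standard oscillatory estimate $\bigl|\int e^{it\Lambda}\,dt\bigr|\leq\min(T,2/|\Lambda|)$ and performing a dyadic decomposition in both $H$ and $|\Lambda|$, the off-diagonal sum is majorised by a quantity of the form $T\exp\bigl(-K(\log T)^{1/3}(\log\log T)^{-1/3}\bigr)$ after optimisation of the truncation length in the approximate functional equation. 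The specific exponent $1/3$ emerges from the balance between the polylogarithmic-in-$\log T$ decay of $1/|\Lambda|$ furnished by the transcendence estimate and the power-of-$T$ size of the approximation error.

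The remaining seven pieces each contain at least one $\chi$ factor, which via Stirling's formula contributes an additional oscillatory phase of order $t\log t$ to the integrand. To each such integral one applies a stationary phase lemma in the style of the Graham--Kolesnik estimate already invoked in the proofs of Theorems \ref{thm601} and \ref{prop3kr}: near the unique stationary point the contribution is controlled by the same Baker-type input applied to the derivative of the phase, while away from the stationary point repeated integration by parts suffices. In every case the saving is of the same shape $\exp(-K_{a,b,c}(\log T)^{1/3}/(\log\log T)^{1/3})$. The main obstacle is precisely the delicate combination of the effective linear-forms-in-logarithms estimate with the dyadic and stationary-phase analyses so that this specific exponent appears, and so that the constant $K_{a,b,c}$ in the statement is genuinely effective; this effectivity is inherited directly from the effectivity of the Baker-type bound in three variables.
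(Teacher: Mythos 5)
Your overall strategy matches the paper's: apply the approximate functional equation, decompose into twisted Dirichlet polynomial integrals, use Baker's theorem to show the diagonal of the untwisted piece reduces to the trivial solution $(1,1,1)$ giving main term $T$, and control the off-diagonal by an effective Baker lower bound on the linear form $a\log n_{1}-b\log n_{2}-c\log n_{3}$. The shape of the final error and the origin of the exponent $1/3$ (inverting the exponent $3$ appearing in $\Omega=\prod\log A_{j}$ in Theorem \ref{linearbaker}) are correctly identified.

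However, you misdiagnose where the diophantine input is actually needed. In the paper only the off-diagonal of the untwisted piece $I_{1}(T)$ (namely $J_{2}(T)$, handled in Lemma \ref{prop4789} by splitting the sum at $N_{1}\leq G_{\delta}(T)$) requires the linear-forms-in-logarithms bound; all the twisted pieces $I_{2},\dots,I_{6}$ are bounded by $T^{1-\delta}$ for a genuine power-saving $\delta>0$ via Titchmarsh's first- and second-derivative tests (Lemmas \ref{lem604}, \ref{lem6054}, \ref{lem606}), with no Baker-type input whatsoever. The reason is that the $\chi$-factors introduce a $t\log t$ term in the phase whose derivative grows like $\log t$, so the phase derivative is uniformly $\gg 1$ away from a bounded region, independently of the arithmetic of $a,b,c$; even the stationary-phase main term of $I_{5}(T)$ is disposed of trivially as $O(T^{3/4+a/4c})$. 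Your claim that ``near the unique stationary point the contribution is controlled by the same Baker-type input applied to the derivative of the phase'' and that ``in every case the saving is of the same shape $\exp(-K(\log T)^{1/3}/(\log\log T)^{1/3})$'' therefore inverts the true picture: the twisted pieces are the easy ones, and applying Baker there would at best reproduce (and in practice obscure) the trivial bound. Also, the parameter you should optimise is the threshold $G_{\delta}(T)$ on the size of $N_{1}=[n_{2}^{b/a}n_{3}^{c/a}]$ separating the two regimes of the off-diagonal sum, not ``the truncation length in the approximate functional equation'' (which is fixed at $\sqrt{t/2\pi}$). These are not fatal errors -- the claimed bounds for the twisted pieces are still valid over-estimates -- but they indicate that you have not pinned down the bottleneck of the argument, which lives entirely in $J_{2}(T)$.
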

The case when the corresponding coefficients are in rational ratio may be easily reduced to the setting underlying Theorem \ref{thm601} via a change of variables, the only intermediate instance remaining to be analysed being that in which only one linear equation involving the coefficients holds. In this context we shall be confronted with a discussion concerning the signs of these coefficients, the nature of the ensuing asymptotic formula having its reliance on those.

\begin{thm}\label{thm1.5}
Let $a,b,c>0$ be algebraic numbers not in rational ratio for which $a<c\leq b$ and having the property that for some $l_{1},l_{2},l_{3}\in\mathbb{Z}$ satisfying $(l_{1},l_{2},l_{3})=1$ the equation
\begin{equation*}al_{1}=bl_{2}+cl_{3},\ \ \ \ \ \ \ \ \ (l_{1},l_{2},l_{3})\neq (0,0,0)\end{equation*} holds. Then if $l_{i}<0$ for some $1\leq i\leq 3$ one has that
$$I_{a,b,c}(T)=T+O\big(Te^{-K_{a,b,c}(\log T)^{1/3}/(\log\log T)^{1/3}}\big).$$ If on the contrary $l_{i}\geq 0$ for every $1\leq i\leq 3$ then 
$$I_{a,b,c}(T)=\zeta\big((l_{1}+l_{2}+l_{3})/2\big)T+O\big(Te^{-K_{a,b,c}(\log T)^{1/3}/(\log\log T)^{1/3}}\big).$$
\end{thm}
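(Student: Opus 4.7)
My plan is to follow the paper's template of inserting the approximate functional equation (\ref{approxim}) into each of the three zeta factors appearing in (\ref{IABC}). Expanding the product produces $2^{3}=8$ principal integrals together with cross-terms involving at least one factor of $O(t^{-1/4})$; the latter are bounded by Cauchy--Schwarz against the mean square $\int_{0}^{T}|\zeta(1/2+it)|^{2}\,dt\ll T\log T$ to yield an acceptable $O(T^{3/4+\varepsilon})$. The core computation is the fully untwisted integral
$$J(T)=\int_{0}^{T}\sum_{\substack{n_{1}\leq (at/2\pi)^{1/2} \\ n_{2}\leq (bt/2\pi)^{1/2} \\ n_{3}\leq (ct/2\pi)^{1/2}}}\frac{n_{1}^{-ait}n_{2}^{bit}n_{3}^{cit}}{(n_{1}n_{2}n_{3})^{1/2}}\,dt,$$
which I would split into a diagonal and an off-diagonal part according to whether the phase $L(n_{1},n_{2},n_{3})=-a\log n_{1}+b\log n_{2}+c\log n_{3}$ vanishes.

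The decisive diophantine input is the structure of integer solutions to $n_{1}^{a}=n_{2}^{b}n_{3}^{c}$. Comparing prime-by-prime valuations and using the hypothesis that $(l_{1},l_{2},l_{3})$ spans the unique rational linear relation among $a,b,c$, the triple $(v_{p}(n_{1}),v_{p}(n_{2}),v_{p}(n_{3}))$ must be an integer multiple of $(l_{1},l_{2},l_{3})$ for every prime $p$. Since no two of $a,b,c$ are in rational ratio, each $l_{i}$ is nonzero (if some $l_{j}=0$ then $a,b,c$ would admit a relation involving only the other two), so non-negativity of valuations forces either (a) the $l_{i}$ to share a common sign---after replacing the vector by its negative one may assume $l_{i}\geq 1$---in which case the diagonal solutions are parametrised by $N\in\mathbb{N}$ via $(n_{1},n_{2},n_{3})=(N^{l_{1}},N^{l_{2}},N^{l_{3}})$, or (b) in the mixed-sign case only $(n_{1},n_{2},n_{3})=(1,1,1)$ survives. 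Performing the $t$-integration over these diagonal triples and extending the summation to infinity---the tail being controlled since $l_{1}+l_{2}+l_{3}\geq 3$---yields the main term $\zeta((l_{1}+l_{2}+l_{3})/2)T$ in case (a) and $T$ in case (b), matching the two regimes in the statement.

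For the off-diagonal contribution, integration in $t$ produces a weight of the shape $1/|L(n_{1},n_{2},n_{3})|$. Here I would invoke Baker's theorem on linear forms in logarithms with algebraic coefficients: for any off-diagonal triple with $n_{i}\leq T^{1/2}$ one has the effective lower bound $|L(n_{1},n_{2},n_{3})|\gg (\log T)^{-\kappa}$ for some $\kappa=\kappa(a,b,c)$, and summing against the weights $(n_{1}n_{2}n_{3})^{-1/2}$ yields an overall contribution of $O(T^{3/4}(\log T)^{\kappa})$, comfortably within the claimed error. The remaining seven twisted integrals, each carrying at least one factor $\chi(1/2\pm i\alpha t)$, would be evaluated via Stirling's asymptotic combined with the stationary phase lemma of Graham--Kolesnik \cite[Lemma 3.4]{Gra}; the stationary conditions reduce to further diophantine constraints on $(n_{1},n_{2},n_{3})$ whose only solutions, by the same algebraicity argument, are either trivial or fall into the parametrised family already accounted for, contributing no new main term.

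The principal obstacle I anticipate is twofold: first, the bookkeeping required to verify that no unanticipated main term arises from any of the seven twisted integrals and that their stationary-phase contributions align with, or cancel against, pieces of the diagonal analysis; and second, extracting the specific sub-polynomial saving $e^{-K(\log T)^{1/3}/(\log\log T)^{1/3}}$ claimed in the statement, given that a direct application of Baker yields savings only of a power of $\log T$. The stated form of the error presumably emerges from a finer analysis combining the stationary-phase contributions from the twisted terms with effective Baker--W\"ustholz constants, perhaps together with a density-type argument bounding the count of triples for which $|L|$ is anomalously small. This delicate interplay between linear forms in logarithms and oscillatory integral estimates is, in my view, the technical heart of the argument.
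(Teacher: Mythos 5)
Your overall scaffold (approximate functional equation, diagonal/off-diagonal split, stationary phase for twisted terms) mirrors the paper's, and your characterisation of the diagonal solutions via prime-by-prime valuations is a legitimate alternative to the paper's iterated application of Baker's theorem to $\log n_1,\log n_2,\log n_3$. However, there are two genuine gaps, one small and one decisive.

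First, the small one: the step ``the triple $(v_p(n_1),v_p(n_2),v_p(n_3))$ must be an integer multiple of $(l_1,l_2,l_3)$'' is not a consequence of unique factorisation alone. Passing from $a\log n_1 - b\log n_2 - c\log n_3 = 0$ to the per-prime identity $a\,v_p(n_1)=b\,v_p(n_2)+c\,v_p(n_3)$ requires knowing that $\log 2,\log 3,\log 5,\ldots$ are linearly independent over $\overline{\mathbb{Q}}$, which is precisely Baker's theorem and must be cited. Also, your parenthetical ``each $l_i$ is nonzero'' is false in general: the hypothesis allows, e.g., $a$ and $c$ to be commensurable with $b$ incommensurable, so $l_2=0$; this is harmless for the eventual count (the paper treats the ``some $n_i=1$'' sub-case separately and arrives at the same parametrisation $n_i=m^{l_i}$), but the assertion as stated is wrong.

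Second, and decisively, your error analysis rests on a misstatement of Baker's lower bound. For off-diagonal triples with $n_i\ll T^{1/2}$, the Baker--type estimate (the paper's Theorem \ref{linearbaker}) gives $|L(n_1,n_2,n_3)|\gg e^{-C(\log T)^{3}\log\log T}$, not $\gg(\log T)^{-\kappa}$. That is a \emph{super-polynomial} loss, so summing the reciprocal weights over all off-diagonal triples does not produce $O(T^{3/4}(\log T)^{\kappa})$ — it does not even beat the trivial bound. The saving of the stated form $\exp(-K(\log T)^{1/3}/(\log\log T)^{1/3})$ does not come from a refined stationary-phase cancellation, as you speculate; it comes from the threshold splitting in the paper's Lemma \ref{prop4789}: one applies Baker only for $N_1\le G_\delta(T)=\exp(\delta(\log T)^{1/3}/(\log\log T)^{1/3})$, where the loss $e^{C(\log N_1)^3\log\log N_1}\ll T^{\delta'}$ is still affordable, and for $N_1>G_\delta(T)$ one uses the trivial bound $T$ on the oscillatory integral, relying on the fact that the weight sum $\sum n_2^{-1/2-b/2a}n_3^{-1/2-c/2a}$ has a convergent tail of size $G_\delta(T)^{a/2c-1/2}$ since $a<c$. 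Optimising $\delta$ yields exactly the $(\log T)^{1/3}/(\log\log T)^{1/3}$ exponent. Without this splitting your argument does not close: the off-diagonal term, not the $T^{3/4}$ remainders, is the genuine bottleneck, and this is the reason the error in the theorem is only sub-polynomially smaller than $T$. Finally, note that no cancellation between the stationary-phase pieces and the diagonal is required here: in this setting the paper shows directly that all the twisted integrals $I_2,\ldots,I_6$ are $O(T^{1-\delta})$ for some fixed $\delta>0$, which is already smaller than the claimed error term.
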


The proofs of Theorems \ref{thm6025} and \ref{thm1.5} primarily differ from that of Theorem \ref{thm601} in the simplicity of the diagonal contribution, the explanation of which having its reliance on a succinct application of Baker's theorem on linear forms in logarithms \cite{Bak1}. The analysis pertaining to the off-diagonal contribution shall also employ bounds stemming from such techniques \cite{Bak2}, those ultimately delivering weaker error terms primarily because of the absence of a strong spacing condition as in the integer setting.

We find it worth observing for the sake of completeness that an asymptotic formula in the simpler instance when the signs of the coefficients in (\ref{IABC}) are the same may be easily deduced with greater ease by employing some of the ideas in this memoir, it ultimately having the shape $$\int_{0}^{T}\zeta(1/2+ait)\zeta(1/2+bit)\zeta(1/2+cit)dt=T+O\big(T^{3/4}\log T\big)$$ for $a,b,c\in\mathbb{R}^{+}$. We have preferred to omit the proof of the preceding formula due to considerations of space.

The exposition of ideas is organised as follows. We begin by presenting some preliminary lemmata and showing how one may easily derive Conjecture \ref{conj11} from the $abc$ conjecture in Section \ref{preliminary}. Section \ref{offdiagonal} is then primarily devoted to the analysis of the off-diagonal contribution. The discussion concerning the diagonal contribution of the irrational algebraic coefficients is contained in Section \ref{secirr}. In Sections \ref{simple} and \ref{sec606} we routinarily bound oscillatory integrals involved in the formula at hand. In contrast, the analysis of the integral performed in Section \ref{sec605} involves the use of the stationary phase method. Section \ref{sec606} is completed with the proof of Theorems \ref{thm601}, \ref{prop3kr}, \ref{thm6025} and \ref{thm1.5}, and Theorems \ref{thm602} and \ref{loweuse} are then discussed and proved in Sections \ref{sec607} and \ref{hannah}. 

We write $[x]$ for $x\in\mathbb{R}$ to denote the nearest integer to $x$. Whenever $\varepsilon$ appears in any bound, it will mean that the bound holds for every $\varepsilon>0$, though the implicit constant then may depend on $\varepsilon$. We use $\ll$ and $\gg$ to denote Vinogradov's notation. When we employ such a notation to describe the limits of summation of a particular sum we shall only be interested in estimating such a sum, and the precise value of the implicit constant won't have any impact in the argument.

\emph{Acknowledgements}: The author's work was initiated during his visit to Purdue University under Trevor Wooley's supervision and finished at KTH while being supported by the G\"oran Gustafsson Foundation. The author would like to thank him for his guidance and helpful comments, Jonathan Bober for useful remarks and both Purdue University and KTH for their support and hospitality, and to acknowledge the activities supported by the NSF Grant DMS-1854398.

\section{Preliminary manoeuvres}\label{preliminary}

As a prelude to the analysis of integrals of unimodular functions, it has been thought convenient to include a sequel of lemmata preparing the ground for subsequent considerations. We start by recalling the following standard result concerning the asymptotic evaluation of the function $\chi(s)$ defined in (\ref{chichi}). 
\begin{lem}\label{lem601}
Let $t>0$. One then has
$$\chi(1/2+it)=\Big(\frac{2\pi}{t}\Big)^{it}e^{it+i\pi/4}\Big(1+O\Big(\frac{1}{t}\Big)\Big),\ \ \ \chi(1/2-it)=\Big(\frac{2\pi}{t}\Big)^{-it}e^{-it-i\pi/4}\Big(1+O\Big(\frac{1}{t}\Big)\Big)$$ as $t\to\infty$.
\end{lem}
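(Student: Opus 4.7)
The plan is to rewrite $\chi(1/2+it)$ in a form amenable to Stirling's formula and to expand each factor separately. Using the reflection identity $\Gamma(s)\Gamma(1-s)=\pi/\sin(\pi s)$ together with $\sin(\pi s)=2\sin(\pi s/2)\cos(\pi s/2)$, the definition (\ref{chichi}) is equivalent to $\chi(s)=2^s\pi^{s-1}\sin(\pi s/2)\Gamma(1-s)$. Specialising $s=1/2+it$ produces three pieces to analyse: the purely exponential term $\sqrt{2}\,\pi^{-1/2}(2\pi)^{it}$, the trigonometric factor $\sin(\pi/4+i\pi t/2)$, and the Gamma factor $\Gamma(1/2-it)$.

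For the trigonometric term I would use $\sin(A+iB)=\sin A\cosh B+i\cos A\sinh B$; for large $t>0$ this yields $\sin(\pi/4+i\pi t/2)=\tfrac{1}{2}e^{\pi t/2}e^{i\pi/4}\bigl(1+O(e^{-\pi t})\bigr)$. For the Gamma factor I would apply the complex Stirling formula $\log\Gamma(s)=(s-1/2)\log s-s+\tfrac12\log(2\pi)+O(1/|s|)$ at $s=1/2-it$, expanding $\log(1/2-it)=\log t-i\pi/2+O(1/t)$ to obtain $\Gamma(1/2-it)=\sqrt{2\pi}\,t^{-it}e^{it-\pi t/2}\bigl(1+O(1/t)\bigr)$. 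Multiplying the three factors together, the real exponentials $e^{\pm\pi t/2}$ cancel, the absolute constants reorganise into $1$, and the pure phases combine as $(2\pi/t)^{it}e^{it+i\pi/4}$, which is the first claimed identity.

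The second identity follows immediately by complex conjugation: $\chi$ admits a series representation with real coefficients, whence $\chi(\bar s)=\overline{\chi(s)}$, and since $\overline{1/2+it}=1/2-it$ for real $t$, conjugating the first formula yields the second. The only point requiring care is the bookkeeping of the cancellation between the exponential growth in $\sin(\pi/4+i\pi t/2)$ and the exponential decay in $\Gamma(1/2-it)$; however, the trigonometric contribution introduces only an $O(e^{-\pi t})$ error while the Stirling remainder is of order $O(1/t)$, so the latter dominates and produces the stated error term.
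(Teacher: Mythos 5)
Your derivation is correct in outline, but note that the paper itself does not derive this formula: its ``proof'' is a one-line citation to the asymptotic expansion for $\chi(1-s)$ appearing right after (7.4.3) in Titchmarsh's book. You have chosen to prove the lemma from scratch via the reflection formula and Stirling, which makes the argument self-contained; that is a legitimate alternative route.

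One small but real slip in the bookkeeping deserves attention. You write that expanding $\log(1/2-it)=\log t-i\pi/2+O(1/t)$ is enough to conclude $\Gamma(1/2-it)=\sqrt{2\pi}\,t^{-it}e^{it-\pi t/2}\bigl(1+O(1/t)\bigr)$. As stated this is not quite sufficient precision: Stirling's formula multiplies $\log(1/2-it)$ by $s-1/2=-it$, which promotes an unnamed $O(1/t)$ term in the logarithm to an unspecified $O(1)$ in the exponent, hence to an unknown bounded multiplicative factor rather than a factor $1+O(1/t)$. You need the next order explicitly, $\log(1/2-it)=\log t-i\pi/2+\tfrac{i}{2t}+O(1/t^2)$. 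The cross term $(-it)\cdot\tfrac{i}{2t}=\tfrac12$ then cancels exactly against the $-\tfrac12$ coming from the $-s=-(1/2-it)$ piece of Stirling, and only after that cancellation do you land on the stated formula with a genuine $1+O(1/t)$ relative error. The rest of your argument --- the rewriting $\chi(s)=2^s\pi^{s-1}\sin(\pi s/2)\Gamma(1-s)$, the asymptotics of $\sin(\pi/4+i\pi t/2)$ with error $O(e^{-\pi t})$, the cancellation of the $e^{\pm\pi t/2}$ factors, the collapse of the constants to $1$, and the conjugation identity $\chi(\bar s)=\overline{\chi(s)}$ yielding the second formula --- is all sound.
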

\begin{proof} The above formulae follow from the equation right after Titchmarsh \cite[(7.4.3)]{Tit} containing the asymptotic evaluation of $\chi(1-s).$
\end{proof}

We next shift our attention to the discussion concerning the fact that the $abc$ conjecture implies Conjecture \ref{conj11}, it being worth to this end stating such a conjecture (see \cite{mas}).
\begin{conj}[$abc$ conjecture]\label{conjert}
Given $\varepsilon>0$ there exists a constant $C_{\varepsilon}$ with the property that for every triple of coprime non-zero integers $(a,b,c)$ satisfying $a+b=c$, one has that
$$\max(\lvert a\rvert,\lvert b\rvert,\lvert c\rvert)\leq C_{\varepsilon}\Big(\prod_{p\mid (abc)}p\Big)^{1+\varepsilon}.$$
\end{conj}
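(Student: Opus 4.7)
The statement is the $abc$ conjecture of Masser-Oesterl\'e, one of the central open problems in diophantine number theory. I am not in a position to propose a genuine proof: no complete unconditional argument is known, and the status of Mochizuki's inter-universal Teichm\"uller theory, which claims to establish the conjecture, remains contested. I can therefore only outline where a viable attack would have to begin and which obstruction blocks the natural approaches.

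The most compelling evidence that the statement should be true is its function field counterpart, the Mason-Stothers theorem: for coprime polynomials $f+g+h=0$ in $\mathbb{C}[t]$, not all constant, one has $\max(\deg f,\deg g,\deg h)\leq \deg\operatorname{rad}(fgh)-1$. The proof is essentially one page and rests on the Wronskian identity $fg'-f'g=fh'-f'h$, the fact that the order of vanishing of $f$ at a point $\alpha$ contributes exactly $\mathrm{ord}_{\alpha}(f)-1$ to the order of the Wronskian, and a final degree count. My plan would be to look for a faithful arithmetic analogue of this argument: interpret $\deg$ as the archimedean logarithmic height, and seek a substitute for differentiation that detects repeated prime factors the way $d/dt$ detects repeated roots. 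Buium's theory of $p$-derivations and, more speculatively, Mochizuki's theta-link constructions, are both attempts to supply exactly this missing operator.

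The best unconditional partial results proceed instead through Baker's method on linear forms in logarithms. By combining archimedean and $p$-adic lower bounds for such forms with a descent on the equation $a+b+c=0$, Stewart-Tijdeman and later Stewart-Yu obtained inequalities of the shape
\[
\log\max(|a|,|b|,|c|)\ll_{\varepsilon}\operatorname{rad}(abc)^{1/3+\varepsilon},
\]
which is exponentially weaker than Conjecture \ref{conjert} but comparable in strength to the Sprindzuk-type estimate alluded to earlier in the introduction. The plan would be to sharpen the $p$-adic Baker estimates feeding into this bound, and to try to iterate the descent. The main obstacle, however, is intrinsic to the method: linear forms in logarithms unavoidably produce bounds of the form $\exp(\operatorname{rad}(abc)^{\alpha})$, whereas the conjecture demands $\operatorname{rad}(abc)^{1+\varepsilon}$. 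Closing this exponential gap appears to require a genuinely new ingredient from arithmetic geometry rather than a refinement of existing transcendence techniques, and for precisely this reason the statement is invoked here only as a hypothesis and not established.
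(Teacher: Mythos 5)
You are right not to attempt a proof: the statement is the Masser--Oesterl\'e $abc$ conjecture, which the paper itself never proves but merely records (citing Masser) and uses as a hypothesis, e.g.\ to deduce Conjecture \ref{conj11}. Your assessment of its status and of the known partial results is accurate and consistent with how the paper treats it.
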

Equipped with the above statement we present the following consequence of its assumption.
\begin{lem}
Conjecture \ref{conjert} implies Conjecture \ref{conj11}.
\end{lem}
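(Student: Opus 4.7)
The plan is to apply Conjecture \ref{conjert} to the identity $n_{2}^{b}n_{3}^{c}+D=n_{1}^{a}$, viewing it as an instance of the $abc$ relation $A+B=C$ after stripping off the greatest common divisor of the three terms. The only subtleties are the presence of this common factor and the need to match the two occurrences of $\varepsilon$ in the statements of Conjectures \ref{conj11} and \ref{conjert}.

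First set $g=\gcd(n_{1}^{a},n_{2}^{b}n_{3}^{c},D)$, which is well-defined since $D\neq 0$, and write $A=n_{2}^{b}n_{3}^{c}/g$, $B=D/g$ and $C=n_{1}^{a}/g$, so that $A+B=C$ and $\gcd(A,B,C)=1$. Since $A+B=C$, any prime dividing two of $A,B,C$ automatically divides the third, and hence the triple is in fact pairwise coprime. Applying Conjecture \ref{conjert} with an auxiliary parameter $\varepsilon'>0$ to be calibrated, one obtains
$$\frac{n_{1}^{a}}{g}=C\leq C_{\varepsilon'}\Big(\prod_{p\mid ABC}p\Big)^{1+\varepsilon'}.$$

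The pairwise coprimality now lets one factor the radical as $\mathrm{rad}(A)\mathrm{rad}(B)\mathrm{rad}(C)$. Since primes dividing $A$ divide $n_{2}n_{3}$ and primes dividing $C$ divide $n_{1}$, while $\mathrm{rad}(B)\leq \lvert B\rvert=\lvert D\rvert/g$, the trivial bounds $\mathrm{rad}(A)\leq n_{2}n_{3}$ and $\mathrm{rad}(C)\leq n_{1}$ transform the above into
$$\frac{n_{1}^{a}}{g}\leq C_{\varepsilon'}\Big(\frac{n_{1}n_{2}n_{3}\lvert D\rvert}{g}\Big)^{1+\varepsilon'}.$$
The decisive point is that the factor $\lvert D\rvert/g$ arising from $\mathrm{rad}(B)$ brings with it a $g^{-(1+\varepsilon')}$ that more than cancels the $g^{-1}$ on the left, leaving behind the harmless factor $g^{-\varepsilon'}\leq 1$. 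Rearranging yields
$$\lvert D\rvert\gg n_{1}^{a/(1+\varepsilon')-1}(n_{2}n_{3})^{-1}.$$

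Finally, given the target $\varepsilon>0$ of Conjecture \ref{conj11} and recalling that $a$ is fixed, one chooses $\varepsilon'>0$ small enough (any $\varepsilon'<\varepsilon/a$ suffices) to ensure $a/(1+\varepsilon')\geq a-\varepsilon$, thereby delivering the desired inequality $\lvert D\rvert\gg n_{1}^{a-1-\varepsilon}(n_{2}n_{3})^{-1}$. No real obstacle arises once the $abc$ conjecture is granted: the only step that is not mere book-keeping is the observation that the pairwise coprimality of $A,B,C$ makes the radical factor multiplicatively, which is precisely what permits the common factor $g$ to cancel.
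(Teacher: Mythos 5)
Your proof is correct and follows essentially the same route as the paper's: strip the common factor $g=\gcd(n_{1}^{a},n_{2}^{b}n_{3}^{c},D)$ (the paper calls it $\lambda$), apply the $abc$ conjecture to the resulting coprime triple, bound the radical by $n_{1}n_{2}n_{3}\lvert D\rvert/g$, and observe that $g$ cancels favourably. The only cosmetic differences are that you spell out the pairwise-coprimality and the calibration of $\varepsilon'$ against $\varepsilon$, which the paper leaves implicit; the substance is identical.
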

\begin{proof}
We take a triple of natural numbers $n_{1},n_{2},n_{3}\in\mathbb{N}$, introduce the number $$D=n_{1}^{a}-n_{2}^{b}n_{3}^{c},$$ recall that $D\neq 0$ as was assumed in the statement of Conjecture \ref{conj11}, and write for convenience $$\lambda=\text{gcd}(\lvert D\rvert,n_{1}^{a},n_{2}^{b}n_{3}^{c}).$$ Observe that then the triple $$(N_{1},N_{2},N_{3})=( n_{1}^{a}\lambda^{-1},n_{2}^{b}n_{3}^{c}\lambda^{-1},\lvert D\rvert\lambda^{-1})$$ comprises relatively coprime positive integers in view of the proviso $D\neq 0$, whence an application of Conjecture \ref{conjert} delivers the inequality $$N_{1}\leq \max(N_{1},N_{2},N_{3})\ll \Big(\prod_{p|(N_{1}N_{2}N_{3})}p\Big)^{1+\varepsilon}.$$ It may be worth observing that
$$\prod_{p|(N_{1}N_{2}N_{3})}p\ll \prod_{p|(n_{1}^{a}n_{2}^{b}n_{3}^{c}D\lambda^{-1})}p= \prod_{p|(n_{1}n_{2}n_{3}D\lambda^{-1})}p\ll n_{1}n_{2}n_{3}\lvert D\rvert\lambda^{-1},$$ the equality in the above line having its reliance on the fact that $D/\lambda$ is an integer, whence a combination of the above equations yields the desired conclusion.

\end{proof}

We conclude by demonstrating how the problem can be reduced to that of computing integrals of products of twisted Dirichlet polynomials, it being desirable to recall first for each $0<\theta<1$ the definition of $D_{\theta}(s)$ in (\ref{diri}). We write $D(s)$ to denote $D_{1/2}(s)$ for the sake of simplicity and introduce
\begin{equation*}P(t)=D(1/2+it)+\chi(1/2+it)D(1/2-it)\end{equation*} for $t\in\mathbb{R},$ where $\chi(s)$ was defined in (\ref{chichi}). It shall also be convenient for further use to define for $T>0$ and fixed $\theta\in\mathbb{R}$ and $a\in\mathbb{N}$ the parameters
\begin{equation}\label{QQQQ}T_{1}=T/2\pi,\ \ \ \ \ \ \ Q=\Big(\frac{aT}{2\pi}\Big)^{\theta},\end{equation} the absence of the dependence with respect to $a,\theta$ in the notation having been imposed for the sake of simplicity.
\begin{lem}\label{lemita601}
With the above notation, one has
\begin{equation}\label{IT5.1}I_{a,b,c}(T)=\int_{0}^{T}P(at)P(-bt)P(-ct)dt+O\big(T^{3/4}(\log T)\big).\end{equation} 
\end{lem}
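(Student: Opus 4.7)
The plan is to substitute the approximate functional equation (\ref{approxim}) for each of the three zeta factors in the integrand of $I_{a,b,c}(T)$ and to verify that every term in the resulting expansion, apart from $P(at)P(-bt)P(-ct)$, contributes $O(T^{3/4}\log T)$ upon integration. Applying (\ref{approxim}) at $s=1/2+iu$ for $u \in \{at, -bt, -ct\}$, with the understanding that the length of the Dirichlet polynomial is determined by $|u|$, yields for $t \geq 1$ the three pointwise approximations
\begin{equation*}
\zeta(1/2+ait) = P(at) + O(t^{-1/4}), \quad \zeta(1/2-bit) = P(-bt) + O(t^{-1/4}), \quad \zeta(1/2-cit) = P(-ct) + O(t^{-1/4}).
\end{equation*}
Multiplying these three expressions and subtracting the main term $P(at)P(-bt)P(-ct)$ decomposes the error into contributions of three shapes: (a) two $P$-factors times one $O(t^{-1/4})$ remainder, (b) one $P$-factor times $O(t^{-1/2})$, and (c) the pure remainder $O(t^{-3/4})$. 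On any fixed initial interval $[0, t_0]$ both integrands are bounded, contributing $O(1)$ to the overall difference.

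For the contribution on $[t_0,T]$ I would invoke the mean-square estimate $\int_{t_0}^T |P(\pm kt)|^2 \, dt \ll T \log T$, uniform for $k \in \{a,b,c\}$ via a trivial change of variable combined with the classical second moment bound for $\zeta(1/2+it)$ and the pointwise control $|P(t)-\zeta(1/2+it)| \ll t^{-1/4}$. Type (c) integrates trivially to $O(T^{1/4})$, and Cauchy--Schwarz combined with the mean-square estimate bounds type (b) by
\begin{equation*}
\int_{t_0}^T |P(at)|\, t^{-1/2}\, dt \leq \Big(\int_{t_0}^T |P(at)|^2\, dt\Big)^{1/2}\Big(\int_{t_0}^T t^{-1}\, dt\Big)^{1/2} \ll T^{1/2}\log T.
\end{equation*}

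The decisive type (a) is handled via Cauchy--Schwarz followed by a dyadic decomposition. For each dyadic block $[T_0, 2T_0] \subset [t_0, T]$ the bound $t^{-1/4} \ll T_0^{-1/4}$ and the mean-square estimate deliver $\int_{T_0}^{2T_0} |P(\pm kt)|^2 t^{-1/4}\, dt \ll T_0^{3/4}\log T_0$; summation over dyadic blocks yields $\int_{t_0}^T |P(\pm kt)|^2 t^{-1/4}\, dt \ll T^{3/4}\log T$, whence
\begin{equation*}
\int_{t_0}^T |P(at) P(-bt)|\, t^{-1/4}\, dt \leq \Big(\int_{t_0}^T |P(at)|^2 t^{-1/4}\, dt\Big)^{1/2}\Big(\int_{t_0}^T |P(-bt)|^2 t^{-1/4}\, dt\Big)^{1/2} \ll T^{3/4}\log T,
\end{equation*}
with analogous bounds for the other two terms of type (a). Summing all contributions yields the claimed error $O(T^{3/4}\log T)$. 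The only non-trivial ingredient is the uniform mean-square bound for $P$; the remainder of the argument consists of routine applications of Cauchy--Schwarz and dyadic summation, so I do not expect any substantive obstacle.
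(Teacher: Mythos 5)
Your proposal is correct and follows essentially the same route as the paper: expand each zeta factor via the approximate functional equation, group the error into the three shapes according to the number of $O(t^{-1/4})$ remainders present, and control each shape by Cauchy--Schwarz together with a mean-square bound. The only cosmetic differences are that the paper phrases the error terms in $\lvert\zeta\rvert$ rather than $\lvert P\rvert$ (equivalent via $\lvert P\rvert \ll \lvert\zeta\rvert + O(t^{-1/4})$) and handles the weighted integral $\int t^{-1/4}\lvert\zeta\rvert^{2}\,dt$ by integration by parts where you use a dyadic decomposition, both of which are interchangeable and yield the same $T^{3/4}\log T$ bound.
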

\begin{proof}

We begin by recalling the approximate functional equation (\ref{approxim}) and defining for $n\in\mathbb{Z}$ the function $\zeta_{n}(t)=\zeta(1/2+nit).$ By using such a formula one readily sees that
\begin{equation*}I_{a,b,c}(T)=\int_{0}^{T}P(at)P(-bt)P(-ct)dt+E(T),\end{equation*}wherein the error term $E(T)$ in the above line satisfies the estimate$$E(T)\ll T^{1/4}+E_{1}(T)+E_{2}(T),$$ and the terms $E_{1}(T)$ and $E_{2}(T)$ are defined by the relations
$$E_{1}(T)= \int_{1}^{T}t^{-1/2}\big(\lvert\zeta_{a}(t)\rvert+\lvert\zeta_{-b}(t)\rvert+\lvert\zeta_{-c}(t)\rvert\big) dt$$ and
$$E_{2}(T)=\int_{1}^{T}t^{-1/4}\Big(\lvert\zeta_{a}(t)\rvert \lvert\zeta_{-b}(t)\rvert +\lvert\zeta_{a}(t)\rvert\lvert\zeta_{-c}(t)\rvert+\lvert\zeta_{-b}(t)\rvert\lvert\zeta_{-c}(t)\rvert\Big)dt.$$We use Cauchy's inequality in conjunction with the asymptotic formula for the second moment of the Riemman Zeta function (see Titchmarsh \cite[Theorem 7.3]{Tit}) to obtain
$$E_{1}(T)\ll (\log T)^{1/2}\Big(\int_{0}^{T}\lvert\zeta(1/2+it)\rvert^{2}\Big)^{1/2}\ll T^{1/2}\log T.$$
Likewise, integration by parts combined with another application of Cauchy's inequality and the aforementioned formula delivers
\begin{align*}E_{2}(T)&\ll T^{-1/4}\int_{0}^{T}\lvert\zeta(1/2+it)\rvert^{2}+\int_{1}^{T}t^{-5/4}\int_{0}^{t}\lvert\zeta(1/2+is)\rvert^{2}dsdt
\\
&\ll T^{3/4}\log T+\int_{1}^{T}t^{-1/4}\log tdt\ll T^{3/4}(\log T).\end{align*} The combination of the above estimates yields the desired result.

\end{proof}

In view of Lemma \ref{IT5.1} it transpires that establishing the asymptotic evaluations which we seek to deliver amounts to computing several integrals of products of twisted Dirichlet polynomials. To the end of not providing the definitions of such integrals all at once it has been thought preferable to define those right before stating each of the lemma concerning their analysis. We shall thereby express the integral in (\ref{IT5.1}) as a sum of six terms, namely
\begin{equation}\label{piresiti}I_{a,b,c}(T)=\sum_{j=1}^{6}I_{j}(T) +O\big(T^{3/4}(\log T)\big).\end{equation}

\section{Off-diagonal contribution}\label{offdiagonal}
We shall devote the present section to the analysis of the off-diagonal contribution, it being pertinent to such an end to introduce beforehand some notation which will be used henceforth in the memoir. We fix a triple $(a,b,c)\in\mathbb{R}^{+}$ and $\theta\in\mathbb{R}$ satisfying $0<\theta<1$, and omit writing the implicit constants' dependence on such a triple underlying the estimates throughout the paper. We write $\bfn=(n_{1},n_{2},n_{3})\in\mathbb{N}^{3}$, and consider the weighted variables \begin{equation}\label{n1prima}n_{1,\theta}'=n_{1}/a^{\theta},\ \ \ \ n_{2}'=n_{2}/\sqrt{b},\ \ \ \ n_{3}'=n_{3}/\sqrt{c},\end{equation} and the parameters \begin{equation}\label{NNN}N_{\bfn,\theta}=2\pi\max(n_{1,\theta}'^{1/\theta},n_{2}'^{2},n_{3}'^{2}) \ \ \ \ \ \text{and}\ \ \ \ \ \  P_{\bfn}=n_{1}n_{2}n_{3}.\end{equation} We write as is customary $n_{1}'$ and $N_{\bfn}$ to denote $n_{1,1/2}'$ and $N_{\bfn,1/2}$ respectively for the sake of concision. We recall (\ref{QQQQ}) and foreshadow the convenience of introducing the set of triples
\begin{equation}\label{Babc}\mathcal{B}_{a,b,c,\theta}=\Big\{\bfn\in\mathbb{N}^{3}:\ \ n_{1}\leq Q,\ \ n_{2}\leq \sqrt{b T_{1}},\ \ n_{3}\leq \sqrt{c T_{1}}\Big\},\end{equation} the condition $\bfn \in \mathcal{B}_{a,b,c,\theta}$ being equivalent to the inequality
\begin{equation*}\label{lades}N_{\bfn,\theta}\leq T,\end{equation*}and the letter $\mathcal{B}_{a,b,c}$ denoting  $\mathcal{B}_{a,b,c,1/2}.$

As a prelude to our discussion we present to the reader 
\begin{align*}\label{Is1}I_{1,\theta}(T)&
=\int_{0}^{T}D_{\theta}(1/2+ait)D(1/2-bit)D(1/2-cit)dt=\sum_{\bfn\in \mathcal{B}_{a,b,c,\theta}}P_{\bfn}^{-1/2}\int_{N_{\bfn}}^{T}\Big(\frac{n_{2}^{b}n_{3}^{c}}{n_{1}^{a}}\Big)^{it}dt,\end{align*}the notation $I_{1,1/2}(T)$ being abbreviated by means of $I_{1}(T)$. We obtain throughout the memoir results for the more general collection of Dirichlet polynomials $D_{\theta}(1/2+ait)$ for future use, the case $\theta=1/2$ being the only required one herein. We make a distinction between the diagonal and the off-diagonal contribution to obtain
\begin{equation}\label{I131}I_{1,\theta}(T)=J_{1}^{\theta}(T)+J_{2}^{\theta}(T),\end{equation}where in the above equation one has \begin{equation}\label{0.121}J_{1}^{\theta}(T)=\sum_{\substack{\bfn\in \mathcal{B}_{a,b,c,\theta}\\ n_{1}^{a}=n_{2}^{b}n_{3}^{c}}}(T-N_{\bfn,\theta})P_{\bfn}^{-1/2},\ \ \ \ \ \ \ \ \ J_{2}^{\theta}(T)=\sum_{\substack{\bfn\in \mathcal{B}_{a,b,c,\theta}\\ n_{1}^{a}\neq n_{2}^{b}n_{3}^{c}}}P_{\bfn}^{-1/2}\int_{N_{\bfn}}^{T}\Big(\frac{n_{2}^{b}n_{3}^{c}}{n_{1}^{a}}\Big)^{it}dt.\end{equation}We write $J_{1}(T)$ and $J_{2}(T)$ as is customary to denote $J_{1}^{1/2}(T)$ and $J_{2}^{1/2}(T)$ respectively. For ease of notation we may omit writing $\bfn\in \mathcal{B}_{a,b,c,\theta}$ in the subscripts of the sums throughout the rest of the memoir and begin by analysing first $J_{2}^{\theta}(T)$. It then may seem appropiate to define for convenience
\begin{equation}\label{Nbc}N_{1}=[ n_{2}^{b/a}n_{3}^{c/a}]\end{equation} for each tuple $(n_{2},n_{3})$. We split the corresponding sum accordingly to obtain \begin{equation}\label{effi}J_{2}^{\theta}(T)= J_{2,2}(T)+O\big(J_{2,1}(T)\big),\end{equation} where
\begin{equation}\label{J212}J_{2,1}(T)=\sum_{\substack{\bfn\in \mathcal{B}_{a,b,c,\theta}\\ n_{1}\neq N_{1}}}P_{\bfn}^{-1/2}\lvert\log\big(n_{1}^{a}/n_{2}^{b}n_{3}^{c}\big)\rvert^{-1}\end{equation} and \begin{equation}\label{J22}J_{2,2}(T)=\sum_{\substack{\bfn\in \mathcal{B}_{a,b,c,\theta}\\ n_{1}=N_{1}}}P_{\bfn}^{-1/2}\int_{N_{\bfn,\theta}}^{T}e^{it\log(n_{2}^{b}n_{3}^{c}/n_{1}^{a})}dt.\end{equation}
We shall omit throughout the paper and as above writing the condition $n_{1}^{a}\neq n_{2}^{b}n_{3}^{c}$ in every sum cognate to the off-diagonal contribution. The reader may find it worth observing that whenever $n_{1}\asymp N_{1}$ then \begin{equation}\label{logilogi}\lvert\log\big(n_{2}^{b}n_{3}^{c}/n_{1}^{a}\big)\rvert\asymp \frac{\lvert n_{2}^{b}n_{3}^{c}-n_{1}^{a}\rvert}{n_{2}^{b}n_{3}^{c}}\asymp \frac{\lvert n_{2}^{b/a}n_{3}^{c/a}-n_{1}\rvert}{n_{2}^{b/a}n_{3}^{c/a}}.\end{equation}

It then transpires that in view of the above relation one may deduce the estimate \begin{align}\label{N1aa}\sum_{\substack{\frac{N_{1}}{2}\leq n_{1}<2N_{1}\\ n_{1}\neq N_{1}}}\frac{n_{1}^{-1/2}}{\lvert\log\big(n_{2}^{b}n_{3}^{c}/n_{1}^{a}\big)\rvert}
\ll \sum_{1\leq \lvert r\rvert\leq N_{1}}\frac{N_{1}^{1/2}}{\lvert n_{2}^{b/a}n_{3}^{c/a}-N_{1}-r\rvert}\ll \sum_{1\leq r\leq N_{1}}\frac{N_{1}^{1/2}}{r}\ll N_{1}^{1/2}\log T.
\end{align}
It seems appropiate to denote $J_{2,1,1}(T)$ the contribution to $J_{2,1}(T)$ of tuples in the range considered in the above line. We recall (\ref{QQQQ}) and observe that when $a\leq c\leq b$ then the preceding equation in conjunction with (\ref{Nbc}) and (\ref{J212}) delivers
\begin{align*}J_{2,1,1}(T)&
\ll (\log T)\sum_{n_{2}^{b}n_{3}^{c}\ll Q^{a}}n_{2}^{-1/2+b/2a}n_{3}^{-1/2+c/2a}\nonumber
\\
&\ll Q^{1/2+a/2b}(\log T)\sum_{n_{3}\ll Q^{a/c}}n_{3}^{-1/2-c/2b}\ll Q^{1/2+a/2c}(\log T)\ll Q^{1/2}T^{1/2}.
\end{align*}
It is worth noting that whenever $n_{1}$ is outside of the range considered above then it transpires that $\lvert \log\big(n_{1}^{a}/n_{2}^{b}n_{3}^{c}\big)\rvert\geq C$ for some positive constant $C>0$, the contribution stemming from these cases thereby being $O(Q^{1/2}T^{1/2})$. Therefore, the preceding discussion yields
\begin{equation}\label{bJ21}J_{2,1}(T)\ll Q^{1/2}T^{1/2}.\end{equation}

The following proposition will be devoted to the estimation of $J_{2}^{\theta}(T)$ for the case of algebraic real coefficients, it being pertinent to present to the reader a version of a theorem of Baker on linear forms in logarithms that shall be utilised in the proof.
\begin{thm}\label{linearbaker}
Let $\beta_{1},\ldots,\beta_{n}$ be a collection of algebraic numbers, and let $\alpha_{1},\ldots,\alpha_{n}\in \mathbb{Q}^{+}$, non of which being $1$. Assume that $\alpha_{j}\leq A_{j}$ for fixed positive numbers $A_{j}$. Then on defining $\Omega=\log A_{1}\cdots\log A_{n}$ and $\Omega'=\Omega/\log A_{n}$ and
$$\Lambda=\beta_{1}\log \alpha_{1}+\ldots+\beta_{n}\log \alpha_{n},$$ one has that if $\Lambda\neq 0$ then $$\lvert\Lambda\rvert>e^{-C\Omega\log \Omega'},$$ wherein the constant $C=C(\beta_{1},\ldots,\beta_{n})$ is effective.
\end{thm}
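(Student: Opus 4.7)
The statement is essentially Baker's classical theorem on linear forms in logarithms, so in the paper the ``proof'' will almost certainly amount to citing \cite{Bak1} (with at most a remark about why the stated version matches the one in the literature). Writing a genuine from-scratch proof here is completely out of proportion to the excerpt, but let me sketch the plan one would follow if forced to reproduce Baker's argument.

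The plan is to argue by contradiction: assume $0<|\Lambda|<e^{-C\Omega\log\Omega'}$ with $C$ a large effective constant depending only on $\beta_1,\ldots,\beta_n$ (and on the degrees/heights of the $\alpha_i$ insofar as those are bounded by the $A_j$). The strategy is the standard auxiliary-function / extrapolation scheme. First I would introduce an auxiliary exponential polynomial
\[
\Phi(z_1,\ldots,z_{n-1}) = \sum_{\lambda_0,\lambda_1,\ldots,\lambda_{n-1}} p(\lambda_0,\ldots,\lambda_{n-1})\, z_1^{\lambda_0}\, \alpha_1^{\lambda_1 z_1}\cdots \alpha_{n-1}^{\lambda_{n-1} z_{n-1}},
\]
with the $\lambda_i$ running over a carefully chosen box of size calibrated to $\Omega$ and $\Omega'$. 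By Siegel's lemma, I would solve a linear system over the ring of integers of the field generated by the $\alpha_i$ and $\beta_i$ to choose the rational integer coefficients $p(\cdot)$, not all zero, so that $\Phi$ (together with a prescribed number of its derivatives) vanishes at all integer points $(l,l,\ldots,l)$ with $0\le l<L_0$, for some initial $L_0$. The heights of the coefficients produced this way can be controlled explicitly in terms of $A_1,\ldots,A_n$.

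Next comes the extrapolation step, which is the heart of Baker's method. Using the hypothesis that $\Lambda$ is small together with the relation
\[
\alpha_n^{\beta_n}=\exp\bigl(-\beta_1\log\alpha_1-\cdots-\beta_{n-1}\log\alpha_{n-1}\bigr)\cdot e^{\Lambda},
\]
one shows that the values of $\Phi$ at many further integer points $(l,l,\ldots,l)$ with $L_0\le l<L_1$ are algebraic numbers of controlled height that are nevertheless extremely small. A Liouville-type inequality then forces them to be exactly zero. Iterating this extrapolation a bounded number of times doubles (or otherwise enlarges) the number of zeros at each stage, and finally a zero-estimate (in the vein of Tijdeman's or Philippon's) shows that $\Phi$ must be identically zero, contradicting the Siegel construction; alternatively, one reaches a Vandermonde-type determinantal identity that forces a nontrivial algebraic number to be smaller than its own Liouville lower bound, producing the contradiction.

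The main obstacle is the bookkeeping of parameters: the sizes of the $\lambda_i$-box, the extrapolation length $L_0,L_1,\ldots$, and the order of vanishing must be balanced so that at each stage the Siegel upper bound for $|\Phi|$ beats the Liouville lower bound, and so that the final contradiction yields exactly the exponent $-C\Omega\log\Omega'$ stated in the theorem. Tracking the effectiveness of $C=C(\beta_1,\ldots,\beta_n)$ and its dependence on the field of definition, as opposed to on the individual $\alpha_i$'s, is also delicate; this is why in practice one simply quotes \cite{Bak1} (or a convenient later refinement such as Baker--W\"ustholz), which is almost certainly what the author intends here.
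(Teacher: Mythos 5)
Your reading of the situation is correct: the paper offers no argument at all, merely the single line ``The proof follows from Baker \cite{Bak2}.'' The only small slip in your prediction is the reference: the author cites Baker's 1977 survey \emph{The theory of linear forms in logarithms} \cite{Bak2}, where the refined bound with the factor $\Omega\log\Omega'$ (rather than the cruder $\Omega(\log\Omega)^{\kappa}$ of the 1966 paper \cite{Bak1}) is stated, not \cite{Bak1} itself. Your sketch of the auxiliary-function, Siegel's lemma, extrapolation, and zero-estimate machinery is a fair high-level description of how that bound is proved, but it plays no role in the paper, which is content to quote the result.
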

\begin{proof}
The proof follows from Baker \cite{Bak2}.
\end{proof}
Equipped with the above result and upon recalling (\ref{QQQQ}), we are prepared to analyse the off-diagonal contribution in the aforementioned setting.

\begin{lem}\label{prop4789}
Let $a,b,c\in\mathbb{R}$ be positive algebraic numbers satisfying $a<c\leq b$. Then,
$$J_{2}(T)\ll Te^{-K_{a,b,c}(\log T)^{1/3}/(\log\log T)^{1/3}},$$wherein the constant $K_{a,b,c}>0$ is effective and only depends on $a,b,c$.
\end{lem}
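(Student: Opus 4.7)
The plan is to use the decomposition $J_2(T) = J_{2,2}(T) + O(J_{2,1}(T))$ provided by (\ref{effi}). For $J_{2,1}(T)$, the derivation of (\ref{bJ21}) in the preceding paragraphs uses only real arithmetic on the exponents $a,b,c$ and never exploits their integrality, so it transfers verbatim to the present setting of positive algebraic reals satisfying $a<c\leq b$, yielding $J_{2,1}(T)\ll T^{3/4}(\log T)$. This is far smaller than the claimed bound, so attention turns to $J_{2,2}(T)$.

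For $J_{2,2}(T)$ I estimate the inner integral by $\min(T,2/|\phi_{\bfn}|)$, where $\phi_{\bfn}=a\log n_1-b\log n_2-c\log n_3\neq 0$. Introduce a truncation parameter $N_0=\exp\bigl(\lambda(\log T/\log\log T)^{1/3}\bigr)$ with $\lambda=\lambda(a,b,c)>0$ to be chosen below, and split the summation into the ranges $\max(n_2,n_3)>N_0$ and $\max(n_2,n_3)\leq N_0$.

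For the large range $\max(n_2,n_3)>N_0$, apply the trivial estimate $\int\leq T$. Since $n_1=N_1\asymp n_2^{b/a}n_3^{c/a}$ one has $P_{\bfn}^{-1/2}\asymp n_2^{-1/2-b/2a}n_3^{-1/2-c/2a}$, and the hypothesis $a<c\leq b$ forces both exponents to be strictly less than $-1$, making the corresponding double series convergent. Its tail beyond $N_0$ is bounded by $\ll N_0^{-(c-a)/(2a)}$, so this portion of $J_{2,2}(T)$ contributes $\ll TN_0^{-(c-a)/(2a)} = Te^{-\lambda(c-a)/(2a)\cdot(\log T/\log\log T)^{1/3}}$, precisely of the shape predicted by the lemma with $K_{a,b,c}=\lambda(c-a)/(2a)$.

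For the small range $\max(n_2,n_3)\leq N_0$, which forces $n_1\ll N_0^{(b+c)/a}$, I apply Baker's Theorem \ref{linearbaker} to the linear form $\phi_{\bfn}$ with algebraic coefficients $a,-b,-c$ and rational $\alpha_j=n_j$. With uniform height bound $A_j\ll N_0^{(b+c)/a}$ the theorem delivers $|\phi_{\bfn}|\gg \exp\bigl(-C(a,b,c)(\log N_0)^3\log\log N_0\bigr)$. Since $(\log N_0)^3\log\log N_0\asymp \lambda^3\log T$, this lower bound equals $T^{-K_1\lambda^3}$ for an effective $K_1=K_1(a,b,c)$. Choosing $\lambda$ small enough that $K_1\lambda^3<1/2$ yields $|\phi_{\bfn}|^{-1}\ll T^{1/2}$, so the small-range contribution is $\ll T^{1/2}\sum P_{\bfn}^{-1/2}\ll T^{1/2}$, negligible against the target. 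Degenerate tuples with some $n_j=1$ are dealt with separately by applying Baker to a shorter linear form, or by the triviality of a lower bound in such cases. The main obstacle is that a direct use of Baker over all tuples is useless, because the bound in \ref{linearbaker} deteriorates faster than polynomially in the heights; the decisive trick is that the strict inequalities $a<c$ and $a<b$ make the series $\sum P_{\bfn}^{-1/2}$ convergent, so the trivial estimate $\int\leq T$ can absorb the contribution of tuples with large $n_2,n_3$, leaving Baker to operate only on the truncated subsum where it still yields a polynomial-in-$T$ lower bound.
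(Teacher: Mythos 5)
Your proposal is correct and follows essentially the same route as the paper: reduce via (\ref{effi})--(\ref{bJ21}) to $J_{2,2}(T)$, introduce a truncation threshold of exactly the shape $\exp\bigl(\lambda(\log T/\log\log T)^{1/3}\bigr)$, handle the large range by the trivial estimate $\int\leq T$ together with the convergence of $\sum n_2^{-1/2-b/2a}n_3^{-1/2-c/2a}$ (forced by $a<c\leq b$), and handle the small range by Baker's theorem, which there delivers a lower bound on $|\phi_{\bfn}|$ that is polynomial in $T$. The only cosmetic difference is that the paper truncates on $N_1$ while you truncate on $\max(n_2,n_3)$, which are equivalent up to reparametrisation of the constant.
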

\begin{proof}

We shall employ (\ref{effi}) and (\ref{bJ21}) and reduce our task to the analysis of $J_{2,2}(T)$.
 It is apparent that an application of Theorem \ref{linearbaker} enables one to derive 
\begin{equation}\label{dosde}\lvert\log\big(n_{2}^{b}n_{3}^{c}/N_{1}^{a}\big)\rvert>e^{-C(\log N_{1})^{3}\log\log N_{1}}\end{equation} for some constant $C>0$ only depending on $a,b,c$. We dropped such a dependance from the notation for the sake of brevity. We also introduce for some small enough constant $\delta>0$ only depending on the coefficients $a,b,c$ the function $$G_{\delta}(t)=e^{\delta(\log t)^{1/3}/(\log\log t)^{1/3}}.$$ Equipped with such a bound, we divide the range of summation to obtain
$$J_{2,2}(T)\ll F_{1}(T)+F_{2}(T),$$ where the above terms are defined by means of the formulae
$$  F_{1}(T)= \sum_{N_{1}\leq G_{\delta}(T)}n_{2}^{-1/2-b/2a}n_{3}^{-1/2-c/2a}e^{C(\log N_{1})^{3}\log\log N_{1}},$$ such an estimate indeed stemming from the application of (\ref{dosde}), and $$F_{2}(T)=T\sum_{N_{1}> G_{\delta}(T)}n_{2}^{-1/2-b/2a}n_{3}^{-1/2-c/2a}.$$

Then by summing over $n_{2}$ and $n_{3}$ accordingly one obtains
$$F_{2}(T)\ll TG_{\delta}(T)^{a/2c-1/2}(\log T),$$ wherein the reader may want to recall the condition $a<c$. Moreover, it transpires that whenever $N_{1}\leq G_{\delta}(T)$ then
$$e^{C(\log N_{1})^{3}\log\log N_{1}}\ll T^{\delta'}$$ for some small enough constant $\delta'>0$, such an observation thus yielding the bound
$$F_{1}(T)\ll T^{\delta'}.$$
The combination of the above estimates then delivers the desired result.
\end{proof}

\begin{lem}\label{prop478}
If $a,b,c\in\mathbb{N}$ satisfying $a<c\leq b$ then for $0<\theta<1$ the bounds
$$J_{2,2}(T)\ll T^{1+1/2c-1/2a}(\log T)^{\eta},\ \ \ \ \ \ \ J_{2}^{\theta}(T)\ll Q^{1/2}T^{1/2}+T^{1+1/2c-1/2a}(\log T)^{\eta},$$ wherein $\eta$ was defined after (\ref{jodo}), hold unconditionally. If one assumes Conjecture \ref{conj11} then 
$$J_{2,2}(T)\ll Q^{1/2+3a/2c+\varepsilon},\ \ \ \ \ \ \ \ J_{2}^{\theta}(T)\ll Q^{1/2}T^{1/2}+Q^{1/2+3a/2c+\varepsilon}.$$

\end{lem}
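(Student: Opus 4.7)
The plan is to reduce matters to estimating $J_{2,2}(T)$. By (\ref{effi}) and (\ref{bJ21}) one has $J_{2}^{\theta}(T)=J_{2,2}(T)+O(Q^{1/2}T^{1/2})$, so the $Q^{1/2}T^{1/2}$ summand in both asserted bounds for $J_{2}^{\theta}$ is immediate. For $J_{2,2}(T)$, recall $n_{1}=N_{1}=[n_{2}^{b/a}n_{3}^{c/a}]$; trivially integrating the oscillatory factor in (\ref{J22}) over $t\in[N_{\bfn,\theta},T]$ delivers the bound $\min(T,L_{\bfn}^{-1})$, where $L_{\bfn}=|\log(n_{2}^{b}n_{3}^{c}/N_{1}^{a})|$. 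Writing $D_{\bfn}=n_{2}^{b}n_{3}^{c}-N_{1}^{a}$, formula (\ref{logilogi}) yields $L_{\bfn}\asymp |D_{\bfn}|/(n_{2}^{b}n_{3}^{c})$, hence $L_{\bfn}^{-1}\asymp n_{2}^{b}n_{3}^{c}/|D_{\bfn}|$. Since $D_{\bfn}$ is a nonzero integer, the crude bound $|D_{\bfn}|\geq 1$ holds unconditionally, giving $L_{\bfn}^{-1}\ll n_{2}^{b}n_{3}^{c}$.

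For the unconditional estimate I would use $n_{1}^{-1/2}\asymp n_{2}^{-b/2a}n_{3}^{-c/2a}$ and split the sum according to whether $n_{2}^{b}n_{3}^{c}\leq T$ or $n_{2}^{b}n_{3}^{c}>T$. In the first range, bounding the inner integral by $L_{\bfn}^{-1}\ll n_{2}^{b}n_{3}^{c}$ reduces the contribution to a double sum of $n_{2}^{b(1-1/2a)-1/2}n_{3}^{c(1-1/2a)-1/2}$, both exponents being strictly positive under the hypothesis $a<c\leq b$; dyadically summing first in $n_{2}$ (up to $(T/n_{3}^{c})^{1/b}$) and then in $n_{3}$ produces a contribution of order $T^{1-1/2a+1/2c}$, with an extra factor $\log T$ precisely in the ridge case $b=c$. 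In the complementary range, bounding the integral by $T$ reduces matters to the tail $\sum_{n_{2}^{b}n_{3}^{c}>T}n_{2}^{-1/2-b/2a}n_{3}^{-1/2-c/2a}$, whose dyadic analysis (with the extremum attained on the $n_{3}$-axis since $c\leq b$) yields $O(T^{1/2c-1/2a})$, so this range contributes at most $T^{1+1/2c-1/2a}$. Combining both ranges delivers the unconditional bound $T^{1+1/2c-1/2a}(\log T)^{\eta}$.

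For the conditional estimate I would replace the trivial $|D_{\bfn}|\geq 1$ with the sharper lower bound $|D_{\bfn}|\gg N_{1}^{a-1-\varepsilon}(n_{2}n_{3})^{-1}$ from Conjecture \ref{conj11}; together with $N_{1}\asymp n_{2}^{b/a}n_{3}^{c/a}$ this yields $L_{\bfn}^{-1}\ll n_{2}^{1+b/a+\varepsilon'}n_{3}^{1+c/a+\varepsilon'}$. Substituting into $J_{2,2}(T)$ reduces the task to estimating $\sum n_{2}^{1/2+b/2a+\varepsilon}n_{3}^{1/2+c/2a+\varepsilon}$ over $n_{2}^{b}n_{3}^{c}\ll Q^{a}$; performing the positive-exponent $n_{2}$-sum first converts the inner sum to a power of $(Q^{a}/n_{3}^{c})^{1/b}$, after which the outer $n_{3}$-sum has exponent $1/2-3c/2b$ and runs up to $Q^{a/c}$, producing $Q^{1/2+3a/2c+\varepsilon}$ as claimed. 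Combining these with $J_{2,1}(T)\ll Q^{1/2}T^{1/2}$ via (\ref{effi}) yields the asserted bounds on $J_{2}^{\theta}(T)$. The main obstacle throughout is the careful bookkeeping in these two-variable constrained sums — tracking which of $n_{2},n_{3}$ saturates the constraint $n_{2}^{b}n_{3}^{c}\leq X$ as $X$ ranges over $T$ and $Q^{a}$, and in particular recognising the degeneration at $b=c$ that is responsible for the logarithmic factor recorded in $\eta$.
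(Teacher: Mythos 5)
Your proof is correct and follows essentially the same route as the paper's. You apply \eqref{effi} and \eqref{bJ21} to isolate $J_{2,2}(T)$, bound the inner integral by $\min(T,L_{\bfn}^{-1})$, use \eqref{logilogi} together with the trivial spacing $|D_{\bfn}|\geq 1$ (respectively the conditional bound $|D_{\bfn}|\gg N_{1}^{a-1-\varepsilon}(n_{2}n_{3})^{-1}$) to estimate $L_{\bfn}^{-1}$, and then split the ensuing double sum at $n_{2}^{b}n_{3}^{c}=T$ (unconditional) or sum directly over $n_{2}^{b}n_{3}^{c}\ll Q^{a}$ (conditional) — exactly the paper's $F_{1}+F_{2}$ decomposition. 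The only cosmetic difference is that in the conditional case you perform the $n_{2}$-sum before the $n_{3}$-sum, whereas the paper reverses the order; both land on $Q^{1/2+3a/2c+\varepsilon}$.
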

\begin{proof}
We shall make use of (\ref{effi}) and (\ref{bJ21}) as above and modify the analysis of $J_{2,2}(T)$ to the end of deriving sharper estimates, it being desirable to remark that triples underlying the term $J_{2,2}(T)$ satisfy \begin{equation}\label{spaci1}\lvert n_{2}^{b}n_{3}^{c}-N_{1}^{a}\rvert\geq 1.\end{equation} We divide the range of summation in accordance with the first inequality in (\ref{logilogi}) to obtain 
$$J_{2,2}(T)\ll F_{1}(T)+F_{2}(T),$$ where the preceding terms are defined by means of the formulas
$$  F_{1}(T)=\sum_{n_{2}^{b}n_{3}^{c}\leq T}n_{2}^{b-1/2-b/2a}n_{3}^{c-1/2-c/2a},\ \ \ \ \ F_{2}(T)=T\sum_{n_{2}^{b}n_{3}^{c}> T}n_{2}^{-1/2-b/2a}n_{3}^{-1/2-c/2a}.$$ It seems worth noting that we bounded the integral in (\ref{J22}) by the inverse of the corresponding logarithm and applied (\ref{logilogi}) and (\ref{spaci1}) subsequently, the same integral cognate to the term $F_{2}(T)$ being estimated by the length of the interval of integration. Summing then over $n_{2}$ first yields
$$F_{1}(T)\ll T^{1+1/2b-1/2a}\sum_{n_{3}\leq T^{1/c}}n_{3}^{-1/2-c/2b}\ll T^{1+1/2c-1/2a}(\log T)^{\eta}.$$ Likewise, an analogous computation reveals that
$$F_{2}(T)\ll T^{1+1/2b-1/2a}\sum_{n_{3}^{c}\leq T}n_{3}^{-1/2-c/2b}+T\sum_{n_{3}^{c}> T}n_{3}^{-1/2-c/2a}\ll T^{1+1/2c-1/2a}(\log T)^{\eta},$$
whence the above estimates deliver
 $$J_{2,2}(T)\ll T^{1+1/2c-1/2a}(\log T)^{\eta},$$ which combined with (\ref{bJ21}) yields the desired conclusion. 

The assumption of Conjecture \ref{conj11} in conjunction with (\ref{logilogi}) yields for tuples associated to the aforementioned term the bound
$$\big\lvert\log \big(N_{1}^{a}/n_{2}^{b}n_{3}^{c})\big\rvert^{-1}\ll (N_{1}n_{2}n_{3})^{1+\varepsilon},$$ whence inserting the previous estimate in (\ref{J22}) one gets
\begin{align*}J_{2,2}(T)\ll T^{\varepsilon}\sum_{n_{2}^{b}n_{3}^{c}\ll Q^{a}}n_{2}^{1/2+b/2a}n_{3}^{1/2+c/2a}&
\ll Q^{1/2+3a/2c+\varepsilon}\sum_{n_{2}\ll Q^{a/b}}n_{2}^{1/2-3b/2c}
\\
&\ll Q^{1/2+3a/2c+\varepsilon}.
\end{align*}
The lemma follows by combining the above bound with (\ref{bJ21}).

\end{proof}
It seems worth pointing out that the assumption $a<\min(b,c)$ was crucially utilised in both the unconditional analysis and the conditional one whenever $\theta\geq 1/2$, an analogue argument not being applicable in other circumstances. If $a=1$ and $\theta=1/2$ then we may obtain an unconditional sharper estimate which would ultimately deliver an error term $O(T^{1/4+1/4c})$ by making use of the corresponding inequality $n_{2}^{b}n_{3}^{c}\ll T^{1/2}$ in the above setting, such a refinement not having any impact in the conclusion of Theorem \ref{prop3kr}. To make further progress it then seems worth recalling the definitions in (\ref{n1prima}) and the subsequent lines and writing
\begin{equation}\label{J1J9}J_{1}(T)=\sigma_{a,b,c}T-J_{3}(T)-J_{4}(T),\end{equation}
wherein the above terms are \begin{equation}\label{0.1212}J_{3}(T)=T\sum_{\substack{n_{1}^{a}=n_{2}^{b}n_{3}^{c}\\ \max(n_{1}'^{2},n_{2}'^{2},n_{3}'^{2})>T_{1}}}P_{\bfn}^{-1/2},\ \ \ \ \ \ \ J_{4}(T)=\sum_{\substack{\bfn\in \mathcal{B}_{a,b,c}\\ n_{1}^{a}=n_{2}^{b}n_{3}^{c}}}N_{\bfn}P_{\bfn}^{-1/2},\end{equation}
and wherein the constant $\sigma_{a,b,c}$ is defined by means of the formula
\begin{equation}\label{sigmaa}\sigma_{a,b,c}=\sum_{n_{1}^{a}=n_{2}^{b}n_{3}^{c}}P_{\bfn}^{-1/2},\end{equation} the convergence of the above series being delivered inter alia by the fact that $a<\min(b,c)$. We observe that a combination of the previous equations with (\ref{I131}), (\ref{effi}) and (\ref{bJ21}) for the choice $\theta=1/2$ enables one to derive
\begin{equation}\label{osss}I_{1}(T)=\sigma_{a,b,c}T-J_{3}(T)-J_{4}(T)+J_{2,2}(T)+O(T^{3/4}),\end{equation} the application of Lemma \ref{prop478} thus entailing 
\begin{equation}\label{osssa}I_{1}(T)=\sigma_{a,b,c}T-J_{3}(T)-J_{4}(T)+O\big(T^{3/4}+T^{1+1/2c-1/2a}(\log T)^{\eta}\big)\end{equation} unconditionally and \begin{equation}\label{osssa1}I_{1}(T)=\sigma_{a,b,c}T-J_{3}(T)-J_{4}(T)+O\big(T^{3/4}+T^{1/4+3a/4c+\varepsilon}\big)\end{equation} under the assumption of Conjecture \ref{conj11}.

\section{Analysis of the irrational algebraic case}\label{secirr}

We shall begin our discussion with an application of Baker's theorem \cite{Bak1} on linear forms in logarithms to discard the existence of non-trivial diagonal solutions of the underlying equation whenever the coefficients are linearly independent over $\mathbb{Q}$.
\begin{prop}\label{diagono}
Let $a,b,c\in\mathbb{R}^{+}$ be algebraic numbers linearly independent over $\mathbb{Q}$. Then there are no solutions to the equation
\begin{equation}\label{abcabc}n_{1}^{a}=n_{2}^{b}n_{3}^{c},\ \ \ \ \ \ \ n_{1},n_{2},n_{3}\in\mathbb{N},\ \ \ \ (n_{1},n_{2},n_{3})\neq (1,1,1).\end{equation}
Consequently, there is an effective constant $K_{a,b,c}$ only depending on $a,b,c$ such that
$$I_{1}(T)= T+O\big(Te^{-K_{a,b,c}(\log T)^{1/3}/(\log\log T)^{1/3}}\big),$$ 
\end{prop}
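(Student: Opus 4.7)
My plan is to proceed in two stages: first to use Baker's theorem to establish that $(1,1,1)$ is the only solution to (\ref{abcabc}) under the hypothesis that $a,b,c$ are $\mathbb{Q}$-linearly independent algebraic numbers, and then to read off the asymptotic evaluation of $I_1(T)$ from the identity (\ref{osss}) derived in Section \ref{offdiagonal}.

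For the arithmetic claim, my starting point would be to take logarithms of $n_1^a = n_2^b n_3^c$, which rewrites as
$$a \log n_1 - b \log n_2 - c \log n_3 = 0.$$
I would then pass to the prime factorisation basis: letting $p_1, \ldots, p_s$ denote the primes dividing $n_1 n_2 n_3$ and expanding $\log n_i = \sum_{j=1}^{s} v_{p_j}(n_i) \log p_j$, the relation rearranges as
$$\sum_{j=1}^{s} \bigl( a\, v_{p_j}(n_1) - b\, v_{p_j}(n_2) - c\, v_{p_j}(n_3) \bigr) \log p_j = 0.$$
Baker's theorem on linear forms in logarithms guarantees that $\log p_1, \ldots, \log p_s$ are linearly independent over $\overline{\mathbb{Q}}$, their $\mathbb{Q}$-linear independence being a trivial consequence of unique factorisation. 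Each algebraic coefficient must therefore vanish, and the $\mathbb{Q}$-linear independence of $a,b,c$ then forces every valuation $v_{p_j}(n_i)$ to be zero, whence $(n_1, n_2, n_3) = (1,1,1)$.

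With the arithmetic claim in hand, the quantitative consequence follows routinely. Since $(1,1,1)$ is the only diagonal solution, the constant in (\ref{sigmaa}) satisfies $\sigma_{a,b,c} = 1$; moreover, for $T$ sufficiently large the sum $J_3(T)$ from (\ref{0.1212}) vanishes, since the trivial triple has bounded weighted coordinates, and $J_4(T) = O(1)$. An application of Lemma \ref{prop4789} in conjunction with the bound (\ref{bJ21}) then delivers $J_{2,2}(T) \ll T e^{-K_{a,b,c}(\log T)^{1/3}/(\log\log T)^{1/3}} + T^{3/4}$, the first term dominating the second for large $T$. Inserting these evaluations into (\ref{osss}) yields the desired asymptotic formula for $I_1(T)$.

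The main obstacle lies in the first step: all of the arithmetic content of the proposition is concentrated there, and it is the only place where the hypothesis of $\mathbb{Q}$-linear independence of $a,b,c$ enters in an essential way. One must take care to select a suitable basis of prime logarithms before invoking Baker's theorem, so that the algebraic coefficients $a,b,c$ can be separated cleanly from the rational $p$-adic valuations of the $n_i$; the remaining bookkeeping amounts to inserting $\sigma_{a,b,c}=1$ and the triviality of $J_3, J_4$ into the identities already established in Section \ref{offdiagonal}.
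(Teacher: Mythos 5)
Your proposal is correct, and the two halves have different relationships to the paper. For the analytic consequence you follow essentially the same route: with $(1,1,1)$ the only diagonal solution one reads off $\sigma_{a,b,c}=1$, $J_3(T)=0$, $J_4(T)=2\pi/a$ for large $T$, and Lemma~\ref{prop4789} kills $J_{2,2}(T)$; the paper does the bookkeeping via $J_1(T)=T-2\pi a^{-1}$ from~(\ref{I131}) and~(\ref{0.121}), which is the same computation in a slightly different packaging.

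The arithmetic half is where you genuinely diverge, and your argument is cleaner. The paper applies Baker's theorem iteratively: from $a\log n_1 - b\log n_2 - c\log n_3 = 0$ it first deduces a multiplicative relation $n_1^{r_1}=n_2^{r_2}n_3^{r_3}$ with rational exponents, eliminates $n_1$ to get a relation $n_2^{\alpha}n_3^{\beta}=1$ with irrational exponents, then (after checking $n_2\neq 1\neq n_3$) applies Baker again to get $n_2^{r_4}=n_3$, and finally assembles a rational linear relation among $a,b,c$, contradiction. This requires a small amount of case analysis (e.g.\ arranging that $r_1\neq 0$, treating $n_2=1$ or $n_3=1$ separately), which the paper's writeup passes over somewhat quickly. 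You instead expand each $\log n_i$ in the prime-logarithm basis $\{\log p_j\}$ for $p_j\mid n_1n_2n_3$ and invoke Baker's theorem once: the $\log p_j$ are $\mathbb{Q}$-linearly independent by unique factorisation, hence $\overline{\mathbb{Q}}$-linearly independent by Baker, so each coefficient $a\,v_{p_j}(n_1)-b\,v_{p_j}(n_2)-c\,v_{p_j}(n_3)$ vanishes, and the $\mathbb{Q}$-linear independence of $a,b,c$ forces all $p$-adic valuations to zero. This single-pass argument avoids the case distinctions entirely and makes the role of the hypothesis on $a,b,c$ more transparent. Both are valid; yours is the tidier proof of the qualitative statement.
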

\begin{proof}

The second statement follows from the first by recalling (\ref{I131}) and (\ref{0.121}) and noting that then $$J_{1}(T)=T-2\pi a^{-1},$$ which in conjunction with Lemma \ref{prop4789} delivers the desired result.

We shift our focus to the first assertion then and begin by assuming the existence of a triple $(n_{1},n_{2},n_{3})$ with the property (\ref{abcabc}). In particular, this entails the linearly dependence of $\log n_{1},\log n_{2}, \log n_{3}$ and $2\pi i$ over the algebraic numbers. Therefore, an application of Baker's theorem \cite{Bak1} establishes the linearly dependence over $\mathbb{Q}$, which in turn yields the existence of rational numbers $r_{1},r_{2},r_{3}\in\mathbb{Q}$ satisfying
$$n_{1}^{r_{1}}=n_{2}^{r_{2}}n_{3}^{r_{3}},$$ wherein we may assume without loss of generality that $r_{1}\neq 0$. Moreover, an examination of (\ref{abcabc}) and the preceding equation reveals that \begin{equation}\label{n2no1}(n_{2},n_{3})\neq (1,1).\end{equation}Therefore, combining both of the equations then delivers the relation
\begin{equation}\label{linearly44}n_{2}^{b/a-r_{2}/r_{1}}n_{3}^{c/a-r_{3}/r_{1}}=1.\end{equation}

It seems pertinent to observe that in view of the linear independence over the rationals of the coefficients $a,b,c$ then the exponents in the above line are non-zero. It therefore transpires by (\ref{n2no1}) and the preceding observation that then $n_{2}\neq 1\neq n_{3}.$ We apply, as we may, Baker's theorem \cite{Bak1} again to obtain a rational number $r_{4}\in\mathbb{Q}$ having the property $$n_{2}^{r_{4}}=n_{3}.$$
Combining the above line with that of (\ref{linearly44}) yields an equality between the corresponding exponents, namely,
$$b+cr_{4}=\Big(\frac{r_{4}r_{3}+r_{2}}{r_{1}}\Big)a,$$ which contradicts the linear independence of the coefficients.

\end{proof}

\begin{prop}\label{prop55}

Let $a,b,c\in\mathbb{R}^{+}$ such that $a<c\leq b$ be algebraic numbers linearly dependent over $\mathbb{Q}$ and not in rational ratio. Let $l_{1},l_{2},l_{3}\in\mathbb{Z}$ satisfying $(l_{1},l_{2},l_{3})=1$ with the property that 
\begin{equation}\label{relatii}al_{1}=bl_{2}+cl_{3},\ \ \ \ \ \ \ \ \ (l_{1},l_{2},l_{3})\neq (0,0,0).\end{equation} Then if $l_{i}<0$ for some $i\leq 3$ one has that
$$I_{1}(T)=T+O\big(Te^{-K_{a,b,c}(\log T)^{1/3}/(\log\log T)^{1/3}}\big).$$ If on the contrary $l_{i}\geq 0$ for every $1\leq i\leq 3$ then 
$$I_{1}(T)=\zeta\big((l_{1}+l_{2}+l_{3})/2\big)T+O\big(Te^{-K_{a,b,c}(\log T)^{1/3}/(\log\log T)^{1/3}}\big).$$
\end{prop}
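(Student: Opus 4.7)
The plan is to decompose $I_1(T) = J_1(T) + J_2(T)$ using (\ref{I131}) with $\theta = 1/2$, apply Lemma \ref{prop4789} to dispose of the off-diagonal contribution $J_2(T)$ within the desired error $Te^{-K_{a,b,c}(\log T)^{1/3}/(\log\log T)^{1/3}}$, and then reduce to the analysis of the diagonal sum $J_1(T)$ via the identity $J_1(T) = \sigma_{a,b,c}T - J_3(T) - J_4(T)$ from (\ref{J1J9}). The entire task therefore hinges on parametrising the integer solutions of $n_1^a = n_2^b n_3^c$ and computing the resulting $\sigma_{a,b,c}$.

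To characterise those solutions I would take logarithms and expand $n_i = \prod_p p^{e_{i,p}}$ via unique factorisation, turning the equation into $\sum_p (a e_{1,p} - b e_{2,p} - c e_{3,p}) \log p = 0$. Baker's theorem guarantees that the logarithms of distinct primes are linearly independent over the algebraic numbers, so every coefficient must vanish, giving $a e_{1,p} = b e_{2,p} + c e_{3,p}$ at every prime $p$. The hypothesis that $a,b,c$ satisfy, up to scalar, only the single rational relation $al_1 = bl_2 + cl_3$ is equivalent to the assertion that $a,b,c$ span a two-dimensional $\mathbb{Q}$-subspace of $\mathbb{R}$; hence the $\mathbb{Q}$-space of rational triples $(x_1,x_2,x_3)$ satisfying $ax_1 = bx_2 + cx_3$ is exactly one-dimensional and spanned by $(l_1,l_2,l_3)$. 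The coprimality $\gcd(l_1,l_2,l_3)=1$ upgrades this to integrality: each prime exponent vector must be of the form $(e_{1,p},e_{2,p},e_{3,p}) = \lambda_p(l_1,l_2,l_3)$ with $\lambda_p \in \mathbb{Z}$.

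The proof then splits according to the signs of the $l_i$. If some $l_i < 0$, then the non-negativity of the $e_{i,p}$ forces $\lambda_p = 0$ at every prime, so $(n_1,n_2,n_3) = (1,1,1)$ is the only diagonal solution, giving $\sigma_{a,b,c} = 1$. If instead all $l_i \geq 0$, a short auxiliary argument shows that the uniqueness of the relation together with the hypothesis "not in rational ratio" forces each pairwise ratio of $a,b,c$ to be irrational, whence $l_i \geq 1$ for each $i$, and the diagonal solutions are exactly $(m^{l_1},m^{l_2},m^{l_3})$ for $m \in \mathbb{N}$; summing over $m$ yields $\sigma_{a,b,c} = \zeta\bigl((l_1+l_2+l_3)/2\bigr)$, convergent since $l_1+l_2+l_3 \geq 3$. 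In either scenario the auxiliary tail and boundary terms $J_3(T)$ and $J_4(T)$ from (\ref{0.1212}) are estimated by summing $P_{\bfn}^{-1/2}$ and $N_{\bfn}P_{\bfn}^{-1/2}$ over $m$ in the appropriate range, giving a clean power-saving bound $J_3(T),J_4(T) \ll T^{1-\delta}$ for some $\delta > 0$, well dominated by the error furnished by Lemma \ref{prop4789}.

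The main obstacle is the characterisation step—rigorously establishing the one-dimensionality of the rational solution space and then combining it with $\gcd(l_1,l_2,l_3)=1$ to force $\lambda_p \in \mathbb{Z}$, alongside the verification that "not in rational ratio" (via uniqueness of the relation) implies all pairwise ratios are irrational so that $l_i \neq 0$ in the non-negative case. Once this parametrisation is in hand, the remaining computations mirror those of Proposition \ref{diagono} and the estimates developed in Section \ref{offdiagonal}.
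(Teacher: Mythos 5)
Your overall strategy matches the paper's: decompose $I_1=J_1+J_2$, dispose of $J_2$ via Lemma \ref{prop4789}, and reduce to a parametrisation of the diagonal solutions of $n_1^a=n_2^bn_3^c$. Where you genuinely diverge is in that parametrisation step. The paper applies Baker's theorem directly to the multiplicative relation among $n_1,n_2,n_3$ to manufacture a rational dependence $n_1^{r_1}=n_2^{r_2}n_3^{r_3}$, then iterates the appeal to Baker's theorem, producing a two-case analysis (according to whether some $n_i=1$) before deducing $n_i=m^{l_i}$. You instead pass to prime factorisation, invoke Baker once to conclude that the coefficients of the distinct $\log p$ all vanish, and then solve the resulting linear system over $\mathbb{Q}$ prime by prime; the integrality of $\lambda_p$ then falls out cleanly from $\gcd(l_1,l_2,l_3)=1$. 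This is a more transparent route that avoids the paper's case split and the iterated use of Baker's theorem, and it isolates the linear algebra ingredient (one-dimensionality of the rational solution space) very clearly.

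One point of caution: your ``short auxiliary argument'' that the hypothesis ``not in rational ratio'' forces every pairwise ratio of $a,b,c$ to be irrational, and hence $l_i\geq 1$ for each $i$, is incorrect as stated. ``Not in rational ratio'' here only means that $a,b,c$ do not all lie on a single rational line, i.e.\ the $\mathbb{Q}$-span is two-dimensional; this is entirely compatible with $a/c$ being rational (with $b$ rationally independent of them), in which case $l_2=0$. The paper's proof explicitly handles this possibility in the sub-case $n_2=1$, where the relation becomes $s_1a=s_3c$. Fortunately your conclusion survives: with all $l_i\geq 0$, the relation $al_1=bl_2+cl_3$ together with $a<\min(b,c)$ gives $l_1>l_2+l_3\geq 1$, hence $l_1\geq 2$ and $l_1+l_2+l_3\geq 3$, which is exactly what is needed both for convergence of $\zeta\big((l_1+l_2+l_3)/2\big)$ and for the power-saving bound $J_4(T)\ll T^{7/8}$ that you invoke (and which the paper derives via the same inequality). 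So you should replace the claim $l_i\geq 1$ by the inequality $l_1>l_2+l_3$; with that fix, the proposal is sound.
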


\begin{proof}
We proceed as above and assume the existence of a triple $(n_{1},n_{2},n_{3})\in\mathbb{N}^{3}$ such that \begin{equation}\label{n1n2n3}n_{1}^{a}=n_{2}^{b}n_{3}^{c},\ \ \ \ \ \ (n_{1},n_{2},n_{3})\neq (1,1,1).\end{equation} The reader may observe that in view of the above equation it is apparent that $(n_{2},n_{3})\neq (1,1)$ as well. An application of Baker's theorem \cite{Bak1} then yields the existence of rational numbers $r_{1},r_{2},r_{3}\in\mathbb{Q}$ satisfying
\begin{equation}\label{r1r2}n_{1}^{r_{1}}=n_{2}^{r_{2}}n_{3}^{r_{3}},\end{equation} with at least one non-zero exponent, say $r_{1}\neq 0$. Therefore, combining both of the equations then delivers the relation
\begin{equation}\label{linearly4}n_{2}^{b/a-r_{2}/r_{1}}n_{3}^{c/a-r_{3}/r_{1}}=1,\end{equation}the above exponents not being simultaneously $0$ in view of the fact that $a,b,c$ are not in rational ratio. If $n_{2}\neq 1\neq n_{3}$ then another application of Baker's theorem \cite{Bak1} assures the validity of the equation $n_{2}^{r_{4}}=n_{3}$ for some $r_{4}\in\mathbb{Q}^{+}$. Therefore, by the preceding discussion it transpires that there exists a triple $(s_{1},s_{2},s_{3})\in\mathbb{N}^{3}$ satisfying the proviso $(s_{1},s_{2},s_{3})=1$ and a natural number $m\in\mathbb{N}$ such that $m\neq 1$ and for which
$$n_{1}=m^{s_{1}},\ \ \ \ \ \ n_{2}=m^{s_{2}},\ \ \ \ \ \ n_{3}=m^{s_{3}}.$$
Combining the above observation with (\ref{n1n2n3}) delivers the relation
$s_{1}a=s_{2}b+s_{3}c.$ The coprimality condition earlier described in conjunction with the fact that the coefficients $a,b,c$ are not in rational ratio then yields
$$(s_{1},s_{2},s_{3})=(l_{1},l_{2},l_{3}).$$

If the assumption made right after (\ref{linearly4}) does not hold, we then may suppose without loss of generality that $n_{2}=1$, whence (\ref{r1r2}) assures as is customary the existence of a natural number $m\geq 2$ and a pair $s_{1},s_{3}\in\mathbb{N}$ satisfying $(s_{1},s_{3})=1$ and with the property that
$$n_{1}=m^{s_{1}},\ \ \ \ \ \ \ \ \ n_{3}=m^{s_{3}}.$$ Combining the above relation with (\ref{n1n2n3}) then provides the identity
$s_{1}a=s_{3}c.$ Consequently, the coprimality condition earlier mentioned and the corresponding irrational ratio property enable one to deduce that
$$(s_{1},0,s_{3})=(l_{1},l_{2},l_{3}).$$
The preceding discussion thus assures that for either circumstance the only solutions of the equation (\ref{n1n2n3}) are of the shape $$n_{1}=m^{l_{1}},\ \ \ \ \ \ n_{2}=m^{l_{2}},\ \ \ \ \ \ n_{3}=m^{l_{3}},\ \ \ \ \ \ \ \ \ \ \ \ m\in\mathbb{N}.$$ 

In particular, it seems desirable to remark that as a consequence of the ensuing analysis, there are no solutions to (\ref{n1n2n3}) when $l_{i}<0$ for some $1\leq i\leq 3,$ whence on recalling (\ref{J1J9}) one has that $$J_{1}(T)=T-2\pi a^{-1},$$ which combined with Proposition \ref{prop4789} delivers the desired result for this particular instance. For the remaining cases we recall (\ref{0.1212}) and observe that then
$$J_{4}(T)\ll \sum_{m^{l_{1}}\leq \sqrt{aT_{1}}}m^{3l_{1}/2-(l_{2}+l_{3})/2}\ll T^{3/4+(2-l_{2}-l_{3})/4l_{1}}.$$ We draw the reader's attention to (\ref{relatii}) to the end of deducing that in view of the fact that $a<\min(b,c)$ it transpires that $l_{1}\geq 2$ and that $l_{2}+l_{3}\geq 1$, such a remark further delivering
$$J_{4}(T)\ll T^{7/8}.$$ By similar reasons and on recalling (\ref{J1J9}) one has
$$J_{1}(T)=T\sum_{m^{l_{1}}\leq \sqrt{aT_{1}}}m^{-(l_{1}+l_{2}+l_{3})/2}+O(T^{7/8})=\zeta\big((l_{1}+l_{2}+l_{3})/2\big)T+O(T^{7/8}),$$ whence a combination of the preceding asymptotic relation and Lemma \ref{prop4789} completes the proof of the proposition at hand.

\end{proof}

\section{Bounds for integrals of unimodular functions}\label{simple}
The following lines will be devoted to provide estimates for some of the integrals involved in (\ref{piresiti}) via a straightforward application of Titchmarsh \cite[Lemmata 4.2, 4.4]{Tit}. The results in this section shall be obtained for positive real coefficients $a,b,c>0$ and arbitrary fixed $0<\theta<1$ for future use in later work. It then seems appropiate to define for pairs of positive real numbers $r,s>0$ the integral
$$Y_{2,r,s}^{\theta}(T)=\int_{0}^{T}D_{\theta}(1/2+ait)D(1/2+irt)D(1/2-ist)\chi(1/2-irt)dt.$$ 
Equipped with these definitions we consider \begin{equation}\label{I2rs}I_{2,\theta}(T)=Y_{2,b,c}^{\theta}(T)+Y_{2,c,b}^{\theta}(T).\end{equation}
Likewise, we further define the pairs of functions $$f_{3,\theta}(t)=D_{\theta}(1/2+ait)\ \ \ \ \ \ \ \ \text{and}\ \ \ \ \ \ \ \ \ f_{4}(t)=\chi(1/2+ait)D(1/2-ait),$$ and the integral
$$I_{3,\theta}(T)=\int_{0}^{T}D(1/2+bit)D(1/2+cit)\chi(1/2-bit)\chi(1/2-cit)f_{3,\theta}(t)dt,$$ the integral $I_{4}(T)$ being defined by the above formula with $f_{4}(t)$ replacing $f_{3,\theta}(t)$. 
\begin{lem}\label{lem604}
Let $a,r,s>0$ be real numbers with the property that $\min(r,s)> 2\theta a$ or $\min(r,s)=a$ and $\theta=1/2$. Then 
$$Y_{2,r,s}^{\theta}(T)\ll T^{1/2}Q^{1/2}.$$ Moreover, whenever $a\leq c\leq b$ one has
$$\max\big(I_{2,\theta}(T),I_{3,\theta}(T)\big)\ll T^{1/2}Q^{1/2},\ \ \ \ \ \ \ \ \ \ \ \ \ I_{4}(T)\ll T^{3/4}.$$ 
\end{lem}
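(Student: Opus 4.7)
The strategy common to all four bounds is to expand each Dirichlet polynomial into its defining sum, interchange summation with integration, and substitute the asymptotic expansion of Lemma \ref{lem601} for each $\chi$-factor. The $O(1/t)$ remainders in Lemma \ref{lem601} contribute errors that are dominated by standard mean-value estimates for Dirichlet polynomials, so the task reduces to bounding weighted sums of oscillatory integrals $\int e^{i\phi_{\bfn}(t)}\,dt$, the lower endpoint of each being determined by the $t$-dependent cutoffs of the various Dirichlet polynomials.

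For $Y_{2,r,s}^{\theta}(T)$, a single $\chi$-asymptotic produces the phase $\phi_{\bfn}(t)=-at\log n_{1}-rt\log n_{2}+st\log n_{3}+rt\log(rt/(2\pi))-rt$, with first derivative $\phi_{\bfn}'(t)=r\log(rt/(2\pi n_{2}))+s\log n_{3}-a\log n_{1}$, second derivative $\phi_{\bfn}''(t)=r/t$, and unique stationary point $t_{0}(\bfn)=(2\pi n_{2}/r)(n_{1}^{a}/n_{3}^{s})^{1/r}$. A dyadic application of Titchmarsh's Lemma 4.4 yields the uniform bound $\sqrt{T/r}$ on each oscillatory integral, while Lemma 4.2, applied on subintervals where $|\phi_{\bfn}'|$ is bounded away from zero, gives first-derivative decay; the standard stationary-phase splitting at $t_{0}(\bfn)$ then produces the sharper $\sqrt{t_{0}/r}$-bound when $t_{0}(\bfn)$ lies inside the integration interval. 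Summing these pointwise bounds against $(n_{1}n_{2}n_{3})^{-1/2}$, and splitting the summation over $\bfn$ according to (i) whether $t_{0}(\bfn)\le T$, and (ii) whether the binding upper constraint on $n_{2}$ is the Dirichlet cutoff $\sqrt{rT_{1}}$ or the stationary-point restriction $n_{2}\le rT_{1}n_{3}^{s/r}/n_{1}^{a/r}$, reduces the problem to estimating several three-dimensional sums of the form $\sum n_{1}^{\alpha_{1}}n_{2}^{\alpha_{2}}n_{3}^{\alpha_{3}}$. The hypothesis $\min(r,s)>2\theta a$ is precisely what places the exponents $\alpha_{i}$ so that the $n_{1}$-sum produces at most $Q^{1/2}$ and the $n_{2},n_{3}$-sums together produce at most $T^{1/2}$, yielding the target bound $T^{1/2}Q^{1/2}$. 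The borderline case $\min(r,s)=a$ with $\theta=1/2$ is handled by the same scheme, with minor extra care for logarithmic factors.

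The estimate on $I_{2,\theta}(T)$ is then immediate on applying the $Y$-bound with $(r,s)\in\{(b,c),(c,b)\}$; in both instances $\min(r,s)=c$, and the hypothesis $a\le c\le b$ places us either in the main clause ($c>a$) or the borderline clause ($c=a,\theta=1/2$) of the first part of the lemma. For $I_{3,\theta}(T)$, a double application of Lemma \ref{lem601} produces a phase with $\phi''(t)=(b+c)/t$ and stationary point $t_{0}\asymp(n_{1}^{a}n_{2}^{b}n_{3}^{c})^{1/(b+c)}$; the analogous stationary-phase summation delivers the $T^{1/2}Q^{1/2}$ bound. For $I_{4}(T)$, the three $\chi$-factors combine via Lemma \ref{lem601} to yield $\phi''(t)=(b+c-a)/t>0$, positivity being ensured by $a\le c\le b$; an analogous analysis, but with all three Dirichlet polynomials of the full length $\sqrt{aT_{1}}$, $\sqrt{bT_{1}}$, $\sqrt{cT_{1}}$, produces the $T^{3/4}$ bound, the absence of a $Q^{1/2}$-factor reflecting the absence of any short Dirichlet polynomial $D_{\theta}$ in the integrand of $I_{4}(T)$.

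The main obstacle is the delicate combinatorial bookkeeping of the weighted exponent sums: the case splits based on the position of $t_{0}(\bfn)$ relative to the integration interval and on which constraint is binding for $n_{2}$ give rise to several subcases, each of which has to be estimated individually and then combined, relying crucially on the hypothesis $\min(r,s)>2\theta a$ to avoid residual powers of $T$ beyond the claimed bound.
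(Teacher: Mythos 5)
You propose a full stationary--phase treatment of the oscillatory integrals: compute the stationary point $t_{0}(\bfn)$, split the integral at $t_{0}$, use the second--derivative test to get a $\sqrt{t_{0}}$ bound when $t_{0}$ lies inside $[N_{\bfn,\theta},T]$, and then perform a multi--case bookkeeping of the resulting exponent sums. This would eventually produce the correct answer, but it misses the central simplification the paper actually uses, and it mischaracterises the role of the hypothesis $\min(r,s)>2\theta a$. In the paper's argument one observes that $F_{2}'$ is \emph{monotone increasing} and that, from the explicit lower bound
\[
F_{2}'(N_{\bfn,\theta})\geq (r-2\theta a)\log\big(\max(n_{1,\theta}'^{1/2\theta},n_{2}',n_{3}')\big)+\tfrac{r}{2}\log r-\theta a\log a+s\log n_{3},
\]
the condition $r>2\theta a$ forces $F_{2}'(N_{\bfn,\theta})\gg 1$ for all but a bounded set of tuples. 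In other words, the stationary point $t_{0}(\bfn)$ lies \emph{below} the left endpoint $N_{\bfn,\theta}$ once the tuple is large. Consequently each integral is $O(1)$ by the first--derivative test (Titchmarsh, Lemma 4.2) alone; there is no interior stationary point, no $\sqrt{t_{0}}$ contribution, and the bound follows crudely from $\sum_{\bfn\in\mathcal{B}_{a,r,s,\theta}}P_{\bfn}^{-1/2}\ll Q^{1/2}T^{1/2}$. The finitely many small tuples contribute $O(T^{1/2})$ by Lemma 4.4. The same pattern handles $I_{3,\theta}$ and $I_{4}$ via the positivity of $b+c-2\theta a$ and $b+c-a$ together with the lower bound at $t=N_{\bfn}$.

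The gap in your plan is therefore twofold. First, you treat the ``stationary point inside the interval'' case as if it carries genuine weight and must be controlled by exponent bookkeeping; in fact that case is (essentially) empty precisely because of the hypothesis, and you do not verify this. Second, your claim that the hypothesis ``places the exponents $\alpha_{i}$'' so the $n_{1}$--sum gives $Q^{1/2}$ and the $n_{2},n_{3}$--sums give $T^{1/2}$ attributes the mechanism to the wrong place: the hypothesis does not tune exponents in a stationary--phase sum, it rules the stationary phase out of the integration range, and the $Q^{1/2}T^{1/2}$ bound comes from the plain sum of $P_{\bfn}^{-1/2}$ over the box $\mathcal{B}_{a,r,s,\theta}$ with $O(1)$ per integral. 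If you insist on the stationary--phase route you would need to check explicitly that the two constraints $t_{0}(\bfn)\geq 2\pi n_{1,\theta}'^{1/\theta}$ and $t_{0}(\bfn)\geq 2\pi n_{2}'^{2}$ are mutually incompatible for nontrivial tuples when $r>2\theta a$; as written, that verification is absent, and without it the subcase analysis is not closed.
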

\begin{proof}
We begin our proof by analysing first $Y_{2,r,s}^{\theta}(T)$, it being required beforehand to note that it is a consequence of Montgomery and Vaughan \cite[Theorem 2]{MonVau2} that
\begin{equation}\label{rrio}\int_{0}^{T}\lvert D_{\theta}(1/2+ait)\rvert^{2}dt\ll T\log T.\end{equation} We then use the approximation formula for $\chi(1/2-rit)$ contained in Lemma \ref{lem601} to obtain
\begin{equation}\label{lbr}Y_{2,r,s}^{\theta}(T)=e^{-i\pi/4}\sum_{\bfn\in \mathcal{B}_{a,r,s,\theta}}P_{\bfn}^{-1/2}\int_{N_{\bfn,\theta}}^{T}e^{iF_{2}(t)}dt+O(T^{\varepsilon}),\end{equation} where the function $F_{2}(t)$ is defined by means of the formula
$$F_{2}(t)=rt\log rt-rt(\log 2\pi+1)-t\log (n_{1}^{a}n_{2}^{r}/n_{3}^{s}),$$ and we employed equation (\ref{rrio}) in conjunction with Holder's inequality to deduce $$\int_{0}^{t}\lvert D_{\theta}(1/2+aiw)D(1/2+irw)D(1/2-isw)\lvert dw\ll t^{1+\varepsilon},$$ such an observation combined with integration by parts and the aforementioned approximation formula delivering the desired error term. It also seems appropiate to differentiate the line cognate to the definition of $F_{2}(t)$ and recall (\ref{n1prima}) to obtain
$$F_{2}'(t)=r\log t-\theta a\log a+\frac{r}{2}\log r-r\log 2\pi-\log(n_{1,\theta}'^{a}n_{2}'^{r})+s\log n_{3}.$$ 

The reader may find it useful to observe that then $F_{2}'(t)$ is an increasing function. In view of the fact that $r\geq a$, it then transpires that \begin{align*}\label{aka}F_{2}'(N_{\bfn,\theta})\geq &\big(r-2\theta a\big)\log\big(\max(n_{1,\theta}'^{1/2\theta},n_{2}',n_{3}')\big)+\frac{r}{2}\log r-\theta a\log a+s\log n_{3},\end{align*} whence by monotonicity the same holds in the interval $[N_{\bfn,\theta},T]$. In view of the above considerations, it transpires that whenever $r>2\theta a$ then for triples with one of the components being large enough, an application of Titchmarsh \cite[Lemma 4.2]{Tit} suffices to deduce that the cognate integral is $O(1)$. Moreover, if the triples $(n_{1},n_{2},n_{3})$ are bounded by a fixed constant then the corresponding contribution to $Y_{2,r,s}^{\theta}(T)$ would then be $O(T^{1/2})$ by Titchmarsh \cite[Lemma 4.4]{Tit}. The preceding discussion yields
\begin{equation*}Y_{2,r,s}^{\theta}(T)\ll T^{1/2}+\sum_{\bfn\in \mathcal{B}_{a,r,s,\theta}}P_{\bfn}^{-1/2}\ll T^{1/2}Q^{1/2}.\end{equation*} 

If on the contrary $r=a$ and $\theta=1/2$ then it is apparent that
$$F_{2}'(N_{\bfn})\geq a\log\big(\max(n_{1},n_{2})/\min(n_{1},n_{2})\big)+s\log n_{3},$$ the above expression entailing $F_{2}'(t)\gg 1$ in the interval at hand if $n_{3}\geq 2$ and the corresponding contribution being $O(T^{3/4})$. If on the contrary $n_{3}=1$ then an application of Titchmarsh \cite[Lemmata 4.2, 4.4]{Tit} enables one to deduce that the corresponding contribution is bounded above by a constant times
\begin{align*}T^{1/2}\sum_{n_{1}\leq \sqrt{aT_{1}}}n_{1}^{-1}+\sum_{\substack{n_{1},n_{2}\leq \sqrt{aT_{1}}\\ n_{1}>n_{2}}}\frac{n_{1}^{-1/2}n_{2}^{-1/2}}{\lvert\log(n_{1}/n_{2})\rvert}\ll T^{1/2}\log T+\sum_{n_{1},r\leq \sqrt{aT_{1}}}\frac{1}{r}\ll T^{1/2}\log T,
\end{align*}
the first summand in the above equations encompassing the instance when $n_{1}=n_{2}.$ The preceding discussion then yields
$$Y_{2,r,s}^{\theta}(T)\ll T^{1/2}Q^{1/2},$$
which delivers the first part of the statement. It might be worth noting that recalling (\ref{I2rs}) and applying the estimates for $Y_{2,b,c}^{\theta}(T)$ and $Y_{2,c,b}^{\theta}(T)$ obtained herein one gets $$I_{2,\theta}(T)\ll T^{1/2}Q^{1/2}.$$ 

We employ for the perusal of $I_{3,\theta}(T)$ the approximation formula in Lemma \ref{lem601} and the argument after (\ref{lbr}), it leading to the error term thereof, to obtain
$$I_{3,\theta}(T)=-i\sum_{\bfn\in \mathcal{B}_{a,b,c,\theta}}P_{\bfn}^{-1/2}\int_{N_{\bfn,\theta}}^{T}e^{iF_{3}(t)}dt+O(T^{\varepsilon}),$$where the derivative of the function $F_{3}(t)$ is 
$$F_{3}'(t)=(b+c)\log t+b\log b+c\log c-(b+c)\log2\pi-\log(n_{1}^{a}n_{2}^{b}n_{3}^{c}),$$ it being desirable to avoid giving account of the definition of $F_{3}(t)$ for the sake of concission. We observe that then $F_{3}'(t)$ is monotonic and that $$F_{3}'(N_{\bfn,\theta})\geq (b+c-2\theta a)\log\big(\max(n_{1,\theta}'^{1/2\theta},n_{2}',n_{3}')\big)+\frac{b}{2}\log b+\frac{c}{2}\log c-\theta a\log a$$ in the interval of integration at hand, wherein the reader may find it useful to recall (\ref{n1prima}), whence in a similar fashion as above, Titchmarsh \cite[Lemmata 4.2, 4.4]{Tit} yields
\begin{equation*}I_{3,\theta}(T)\ll T^{1/2}Q^{1/2}.\end{equation*}

In order to estimate $I_{4}(T)$ we use as customary the formulae in Lemma \ref{lem601} to obtain
$$I_{4}(T)=e^{-i\pi/4}\sum_{\bfn\in \mathcal{B}_{a,b,c}}P_{\bfn}^{-1/2}\int_{N_{\bfn}}^{T}e^{iF_{4}(t)}dt+O(T^{\varepsilon}),$$it being convenient for the sake of brevity to avoid providing a definition for the above function and indicate that its derivative is 
$$F_{4}'(t)=(b+c-a)\log t+b\log b+c\log c-a\log a-(b+c-a)\log2\pi-\log (n_{2}^{b}n_{3}^{c}/n_{1}^{a}).$$ We observe that then $F_{4}'(t)$ is monotonic and $$F_{4}'(N_{\bfn})\geq (b+c-2a)\log \big(\max(n_{1}',n_{2}',n_{3}')\big)+a\log n_{1}'+\frac{b}{2}\log b+\frac{c}{2}\log c-\frac{a}{2}\log a,$$ whence Titchmarsh \cite[Lemmata 4.2, 4.4]{Tit} in conjunction with the arguments previously employed then yields
\begin{equation*}I_{4}(T)\ll T^{3/4},\end{equation*}
as desired.\end{proof}

\section{An application of the stationary phase method}\label{sec605}

By proceeding in a routinary manner we shall ascend to a position from which an application of Titchmarsh \cite[Lemmata 4.2, 4.4]{Tit} will already suffice to obtain unconditional bounds for $I_{5}(T)$ of the required precision. Nonetheless, an approximation superior to that obtained in the unconditional analysis can be pursued via the stationary phase method if one further assumes Conjecture \ref{conj11}. We consider
$$I_{5}(T)=\int_{0}^{T}D(1/2-ait)D(1/2-bit)D(1/2-cit)\chi(1/2+ait)dt,$$ and gather an unconditional formula concerning such a term in the following lemma.
\begin{lem}\label{lem6054}
Let $a\leq c\leq b$ with $a,b,c\in\mathbb{R}_{+}$ and let $I_{5}(T)$ be defined as above. Then,
\begin{align*}I_{5}(T)=2\pi a^{-1}\sum_{\substack{N_{\bfn}\leq c_{\bfn}\leq T\\\bfn\in \mathcal{B}_{a,b,c}}}n_{2}^{b/2a-1/2}n_{3}^{c/2a-1/2}e( n_{1}n_{2}^{b/a}n_{3}^{c/a})+R(T),
\end{align*}wherein the term $R(T)$ satisfies
$$R(T)\ll T^{3/4}(\log T)+T^{1/2+a/2c}(\log T)^{2}.$$ Moreover, when $a< b$ then upon recalling $\eta$ defined after (\ref{jodo}) one has
$$I_{5}(T)\ll T^{3/4+a/4c}(\log T)^{\eta}.$$
\end{lem}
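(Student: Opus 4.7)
The plan is to attack $I_5(T)$ via the stationary phase method. First, use Lemma~\ref{lem601} to substitute the asymptotic expansion of $\chi(1/2+ait)$, expand the Dirichlet polynomials $D(1/2-rit)$ for $r=a,b,c$, and interchange summation and integration. The $O(1/t)$ error in Lemma~\ref{lem601} is absorbed via Cauchy--Schwarz together with (\ref{rrio}) precisely as in the derivation of (\ref{lbr}), producing
\begin{equation*}
I_5(T)=e^{i\pi/4}\sum_{\bfn\in\mathcal{B}_{a,b,c}}P_{\bfn}^{-1/2}\int_{N_{\bfn}}^{T}e^{iG_5(t)}\,dt+O(T^{\varepsilon}),
\end{equation*}
where $G_5(t)=at\log(2\pi/(at))+at+t(a\log n_1+b\log n_2+c\log n_3)$. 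A direct differentiation yields $G_5'(t)=a\log\bigl(2\pi n_1n_2^{b/a}n_3^{c/a}/(at)\bigr)$, which vanishes at the unique stationary point $c_{\bfn}=(2\pi/a)n_1n_2^{b/a}n_3^{c/a}$; there $G_5''(c_{\bfn})=-a/c_{\bfn}<0$ and $G_5(c_{\bfn})=ac_{\bfn}=2\pi n_1n_2^{b/a}n_3^{c/a}$.

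I would then split the sum according to whether $c_{\bfn}\in[N_{\bfn},T]$. For triples outside this range the interval contains no critical point, and monotonicity of $G_5'$ combined with the first-derivative test \cite[Lemma~4.2]{Tit} absorbs the contribution into $O(T^{3/4}\log T)$ by the same scheme employed for $I_4$ in Lemma~\ref{lem604}. For triples with $c_{\bfn}\in[N_{\bfn},T]$ I apply the stationary phase lemma of Graham--Kolesnik \cite[Lemma~3.4]{Gra}. Its principal term equates to
\begin{equation*}
\sqrt{2\pi/|G_5''(c_{\bfn})|}\,e^{iG_5(c_{\bfn})-i\pi/4}=\frac{2\pi}{a}\sqrt{n_1n_2^{b/a}n_3^{c/a}}\,e^{2\pi in_1n_2^{b/a}n_3^{c/a}}e^{-i\pi/4},
\end{equation*}
which after multiplication by $P_{\bfn}^{-1/2}e^{i\pi/4}$ reproduces the displayed main term $2\pi a^{-1}n_2^{b/2a-1/2}n_3^{c/2a-1/2}e(n_1n_2^{b/a}n_3^{c/a})$. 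The remainder $R(T)$ is what survives after summing the Graham--Kolesnik error (higher derivatives of $G_5$ weighted by $|G_5''|^{-1/2}$), the boundary contributions at $N_{\bfn}$ and $T$, and the no-stationary-point piece over $\bfn\in\mathcal{B}_{a,b,c}$.

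For the second assertion I would simply take absolute values in the main formula, reducing the task to estimating $\sum n_2^{(b-a)/(2a)}n_3^{(c-a)/(2a)}$ restricted by $\bfn\in\mathcal{B}_{a,b,c}$ and $c_{\bfn}\leq T$, i.e. $n_1\leq aT_1/(n_2^{b/a}n_3^{c/a})$. Executing the $n_1$ sum gives a weight $\min(\sqrt{aT_1},aT_1/(n_2^{b/a}n_3^{c/a}))$, and I would dissect the $(n_2,n_3)$ sum according to whether $n_2^{b/a}n_3^{c/a}\lessgtr\sqrt{aT_1}$, further splitting the second regime by $n_3\lessgtr T_1^{a/(2c)}$. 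Since $a<b$, the $n_2$ sums in each piece are governed by the boundary values, and after summation in $n_2$ each piece reduces to a series in $n_3$ with exponent $-(b+c)/(2b)$; this exceeds $-1$ precisely when $c<b$ (no logarithm) and equals $-1$ when $c=b$ (yielding the single factor of $\log T$). Collating exponents in each sub-region returns in every case the claimed bound $T^{3/4+a/(4c)}(\log T)^{\eta}$.

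The main obstacle will be the stationary phase bookkeeping. Extracting the sharper $T^{1/2+a/(2c)}(\log T)^{2}$ rather than the weaker $T^{3/4+a/(4c)}$ produced by the trivial absolute-value bound on the main term requires applying Graham--Kolesnik in a form that remains valid when $c_{\bfn}$ sits close to an endpoint of $[N_{\bfn},T]$, where $|G_5''|^{-1/2}$ must be balanced carefully against $|G_5'|^{-1}$ at the boundary before the sum over $\bfn$ is collapsed; ensuring uniformity of this balance over the full range of $\bfn\in\mathcal{B}_{a,b,c}$ is the technical heart of the argument.
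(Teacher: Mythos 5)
Your proposal follows essentially the same route as the paper: approximate $\chi(1/2+ait)$ via Lemma~\ref{lem601}, locate the stationary point $c_{\bfn}=2\pi n_1 n_2^{b/a}n_3^{c/a}/a$ of the resulting phase, apply the Graham--Kolesnik stationary phase lemma to extract exactly the stated main term, and then trivially bound the main term in the regime $a<b$ by the dissection $n_2^{b/a}n_3^{c/a}\lessgtr\sqrt{T_1}$. The ``bookkeeping'' you flag as the technical heart — balancing $|F_5'|^{-1}$ against $|F_5''|^{-1/2}$ when $c_{\bfn}$ lies near an endpoint — is precisely what the paper carries out by introducing the terms $E_1(T)$, $E_2(T)$ in (\ref{EjE}) and estimating them one by one.
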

\begin{proof}
We begin the discussion by recalling (\ref{NNN}) to the reader and employing as customary Lemma \ref{lem601} to approximate $\chi(1/2+ait)$ and express the above term as 
\begin{equation}\label{I4}I_{5}(T)=e^{i\pi/4}\sum_{\bfn\in \mathcal{B}_{a,b,c}}P_{\bfn}^{-1/2}\int_{N_{\bfn}}^{T}e^{iF_{5}(t)}dt+O(T^{\varepsilon}),\end{equation} where the function $F_{5}(t)$ is defined by the relation
$$F_{5}(t)=-at\log at+at(\log 2\pi+1)+t\log (n_{1}^{a}n_{2}^{b}n_{3}^{c}),$$ the customary argument utilised in (\ref{lbr}) playing a prominent role to obtain the above error term. We compute its derivative
\begin{equation}\label{F44}F_{5}'(t)=-a\log t-a\log a+a\log 2\pi+\log(n_{1}^{a}n_{2}^{b}n_{3}^{c}),\end{equation} whence on denoting $c_{\bfn}=2\pi n_{1}n_{2}^{b/a}n_{3}^{c/a}/a$ it transpires that $F_{5}'(c_{\bfn})=0.$ Before making further progress it seems desirable to define the functions $A_{1}(\bfn)=N_{\bfn}$ and $A_{2}(\bfn)=T.$ We combine both the application of Titchmarsh \cite[Lemmata 4.2, 4.4]{Tit} with Graham and Kolesnik \cite[Lemma 3.4]{Gra} to the integral in (\ref{I4}) to obtain
\begin{align}\label{S1}I_{5}(T)=&
2\pi a^{-1}\sum_{\substack{N_{\bfn}\leq c_{\bfn}\leq T\\ \bfn\in \mathcal{B}_{a,b,c}}}n_{2}^{b/2a-1/2}n_{3}^{c/2a-1/2}e( n_{1}n_{2}^{b/a}n_{3}^{c/a})+O\big(T^{3/4}\big)
\\
&+O\big(E_{1}(T)+E_{2}(T)\big),\nonumber
\end{align} the above error $O(T^{3/4})$ arising after summing over tuples $\bfn\in \mathcal{B}_{a,b,c}$ the error term $R_{2}$ in the statement of Graham and Kolesnik \cite[Lemma 3.4]{Gra}, it being on this setting $O(1)$, stemming after the application of the stationary phase method to each of the integrals, and wherein the above terms are defined by means of the formula \begin{equation}\label{EjE}E_{j}(T)=\sum_{\substack{A_{j}(\bfn)/2\leq c_{\bfn}\leq 2A_{j}(\bfn)\\ \bfn\in \mathcal{B}_{a,b,c}}}P_{\bfn}^{-1/2}\min\big(\lvert F_{5}'(A_{j}(\bfn))\rvert^{-1},A_{j}(\bfn)^{1/2}\big)\ \ \ \ \ \ \ \ \ \ j=1,2.\end{equation}

The reader shall rest assured that further details about such an application will be delivered promptly. It may first be useful to observe that in the preceding lines we implicitly applied Graham and Kolesnik \cite[Lemma 3.4]{Gra} for the range $2N_{\bfn}\leq c_{\bfn}\leq T/2$ to the integral
$$\int_{c_{\bfn}/2}^{2c_{\bfn}}e^{iF_{5}(t)}dt$$ and estimated the remaining parts of the integral in (\ref{I4}) employing Titchmarsh \cite[Lemma 4.2]{Tit}. Observe that then the error term arising from such remaining parts is $O(1)$. Likewise, if $N_{\bfn}\leq c_{\bfn}< 2N_{\bfn}$ and $c_{\bfn}\leq T/2$ then Graham and Kolesnik \cite[Lemma 3.4]{Gra} is applied with the choices $\alpha=N_{\bfn}$ and $\beta=2c_{\bfn}$, the error term $E_{1}(T)$ in the above formula arising after such an application. If instead $T/2< c_{\bfn}\leq T$ and $2N_{\bfn}\leq c_{\bfn}$ then the latter lemma shall be employed by taking $\alpha=c_{\bfn}/2$ and $\beta=T$, the error term $E_{2}(T)$ in the preceding equation stemming from this instance. If on the contrary one has $4N_{\bfn}>T$ then $\alpha=N_{\bfn}$ and $\beta=T$ will suffice to obtain the desired result. Finally, whenever either $N_{\bfn}/2\leq c_{\bfn}< N_{\bfn}$ or $T< c_{\bfn}\leq 2T$ then Titchmarsh \cite[Lemmata 4.2, 4.4]{Tit} will provide a contribution which will be absorbed in the error term of the above equation.

We resume our discussion by denoting first $E_{1,i}(T)$ to the contribution to $E_{1}(T)$ of tuples satifying $n_{i}'=\max(n_{1}',n_{2}',n_{3}')$ respectively for each $1\leq i\leq 3,$ it being appropiate to begin by analysing $E_{1,1}(T)$. To this end we define, for each $(n_{2},n_{3})$, the parameter \begin{equation}\label{N1N1}U_{1}= n_{2}^{b/a}n_{3}^{c/a}.\end{equation} We observe for further use that employing this notation then the range of summation $N_{\bfn}/2\leq c_{\bfn}\leq 2N_{\bfn}$ in the first error term of (\ref{S1}) is equivalent to $n_{1}/2\leq U_{1}\leq 2n_{1}$, and that on recalling (\ref{F44}) one has that
$$F_{5}'(N_{\bfn})=a\log U_{1}-a\log n_{1}.$$ 
We trivially bound the minimum cognate to the formula (\ref{EjE}) by $N_{\bfn}^{1/2}$ for the instance $\lvert n_{1}-U_{1}\rvert\leq 1$ and apply the same argument as the one deployed in (\ref{N1aa}) and the subsequent equations for the instance $U_{1}/2\leq n_{1}\leq 2U_{1}$ to deduce
\begin{align*}
\sum_{n_{2},n_{3}\ll \sqrt{T}}&
\sum_{\substack{U_{1}/2\leq n_{1}\leq 2U_{1}\\ n_{1}\ll \sqrt{T}}}P_{\bfn}^{-1/2}\min\big(\lvert F_{5}'(N_{\bfn})\rvert^{-1},N_{\bfn}^{1/2}\big)\nonumber
\\
&\ll (\log T)\sum_{U_{1}\ll\sqrt{T}}n_{2}^{-1/2}n_{3}^{-1/2}U_{1}^{1/2}\ll T^{1/4}(\log T)\sum_{\substack{n_{2}\ll \sqrt{T}\\ n_{3}\ll \sqrt{T}}}n_{2}^{-1/2}n_{3}^{-1/2}\ll T^{3/4}(\log T).
\end{align*} 
The reader may note that for $n_{1}$ outside of the range considered above then $\lvert F_{5}'(N_{\bfn})\rvert^{-1}\ll 1$, whence the contribution to the above sum arising from such tuples  is $O(T^{3/4})$, and hence
$$E_{1,1}(T)\ll T^{3/4}\log T.$$ 

We next focus on the term $E_{1,2}(T).$ It seems appropiate to observe first that when $b\geq 2a$ then in view of (\ref{F44}) one has $$ F_{5}'(N_{\bfn})=\log (n_{1}^{a}n_{2}^{b-2a}n_{3}^{c})+a\log (b/a).$$ It therefore transpires that whenever either $n_{1}$ or $n_{3}$ are sufficiently large then one may estimate such a contribution to $E_{1,2}(T)$ by $O(T^{3/4}),$ the instance when both of the entries are bounded being estimated by means of the trivial observation $N_{\bfn}^{1/2}\ll T^{1/2},$ which in turn yields the bound $O(T^{3/4})$ for such a contribution. Suffices it then to consider the case $b<2a.$ To this end it seems worth defining for each $(n_{1},n_{3})$ the parameter $$N_{2}=\big(b^{a}n_{1}^{a}n_{3}^{c}/a^{a}\big)^{1/(2a-b)}$$ and observe that in this particular instance it is apparent that 
$$F_{5}'(N_{\bfn})=(2a-b)\big(\log N_{2}-\log n_{2}\big).$$
Therefore, an analogous argument to the one utilised above enables one to deduce
$$E_{1,2}(T)\ll T^{3/4}\log T.$$ The term $E_{1,3}(T)$ presents an analogous behaviour to that of $E_{1,2}(T)$, a similar analysis within this circle of ideas thus delivering $E_{1,3}(T)=O(T^{3/4}\log T),$ and hence
\begin{equation}\label{EEE}E_{1}(T)\ll T^{3/4}\log T.\end{equation} 
 
We finally analyse the error term $E_{2}(T)$ and begin by noting for convenience that whenever $T/2\leq c_{\bfn}\leq 2T$ then $T^{1/2}\ll n_{2}^{b/a}n_{3}^{c/a}\ll T.$ We take $\Lambda_{1}= aT_{1}n_{2}^{-b/a}n_{3}^{-c/a}$ and observe on recalling (\ref{F44}) that 
$$F_{5}'(T)=a(\log n_{1}-\log \Lambda_{1}).$$ It also seems worth noting in view of the above considerations that $n_{1}\asymp \Lambda_{1}$, such a relation in conjunction with the underlying restriction $n_{1}\leq \sqrt{aT}$ in turn entailing the bounds
$$\Lambda_{1}\ll T^{1/2},\ \ \ \ \ \ \ \ \Lambda_{1}^{1/2}\ll \Lambda_{1}^{-1/2}T^{1/2}.$$ The same arguments utilised on previous occasions then deliver
\begin{align}\label{E2T}
E_{2}(T)&
\ll T^{1/2}(\log T)\sum_{n_{2}^{b}n_{3}^{c}\ll T^{a}}n_{2}^{-1/2}n_{3}^{-1/2}\Lambda_{1}^{-1/2}\ll (\log T)\sum_{n_{2}^{b}n_{3}^{c}\ll T^{a}}n_{2}^{-1/2+b/2a}n_{3}^{-1/2+c/2a}\nonumber
\\
&\ll (\log T)T^{1/2+a/2b}\sum_{n_{3}\ll T^{a/c}}n_{3}^{-1/2-c/2b}\ll (\log T)^{2}\min(T^{1/2+a/2c},T^{3/4+(2a-c)/4b}),
\end{align}
where in the above sums we omitted writing the restriction $n_{3}\ll \sqrt{T}$. The preceding discussion then yields the first statement of the lemma. 

In order to conclude the proof it thus remains to focus our attention on the main term of (\ref{S1}), which we denote by $P(T)$. By bounding the exponential sum on $P(T)$ trivially we then find that
\begin{align*}P(T)&
\ll \sum_{\substack{n_{2}^{b/a}n_{3}^{c/a}\ll T\\ n_{3}\leq \sqrt{cT_{1}}}}n_{2}^{b/2a-1/2}n_{3}^{c/2a-1/2}\min\big(T^{1/2},Tn_{2}^{-b/a}n_{3}^{-c/a}\big)\ll P_{1}(T)+P_{2}(T),\end{align*}
where the terms in the above line are defined by means of  \begin{equation*}P_{1}(T)= T^{1/2}\sum_{n_{2}^{b/a}n_{3}^{c/a}\leq \sqrt{cT_{1}}}n_{2}^{b/2a-1/2}n_{3}^{c/2a-1/2},\end{equation*} and $$P_{2}(T)=T\sum_{\substack{n_{2}^{b/a}n_{3}^{c/a}> \sqrt{cT_{1}}\\n_{3}\leq \sqrt{cT_{1}}}}n_{2}^{-1/2-b/2a}n_{3}^{-1/2-c/2a}.$$ We shall begin our investigation examining first $P_{1}(T)$. Summing over $n_{3}$ we obtain
\begin{equation*}P_{1}(T)\ll T^{3/4+a/4c}\sum_{n_{2}\leq (cT_{1})^{a/2b}}n_{2}^{-1/2-b/2c}\ll T^{3/4+a/4c}(\log T)^{\eta}.\end{equation*} Likewise, we have
\begin{equation*}P_{2}(T)\ll T\sum_{\substack{n_{3}^{c/a}> \sqrt{cT_{1}}\\ n_{3}\leq \sqrt{cT_{1}}}}n_{3}^{-1/2-c/2a}+T^{3/4+a/4b}\sum_{n_{3}^{c/a}\leq \sqrt{cT_{1}}}n_{3}^{-1/2-c/2b}\ll T^{3/4+a/4c}(\log T)^{\eta}.\end{equation*}

It is worth noting that whenever $a=c$ then one always has $n_{3}\leq \sqrt{cT_{1}}$, whence in this particular instance there is no first summand on the right side of the above equation. The preceding discussion in conjunction with (\ref{S1}), (\ref{EEE}) and (\ref{E2T})  and the observation that $(2a-c)c<ab$ under the proviso $a<b$ then delivers the estimate
$$I_{5}(T)\ll T^{3/4+a/4c}(\log T)^{\eta},$$ as desired.
\end{proof}

The argument to bound $I_{5}(T)$ could have been employed after a straight application of Titchmarsh \cite[Lemmata 4.2, 4.4]{Tit}. We invoked Graham and Kolesnik \cite[Lemma 3.4]{Gra} herein because one may derive an asymptotic formula comprising main terms which shall eventually cancel with analogous terms stemming from the computation of $I_{1}(T)$ in the context underlying Theorem \ref{thm601}, it therefore being desirable to have a succinct discussion concerning the main term $P(T)$ in (\ref{S1}). We recall (\ref{0.1212}) and (\ref{N1N1}) and denote by $M_{1}(T)$ to the contribution to $P(T)$ of tuples with the property that $U_{1}$ is not an integer. Likewise, let $K_{1}(T)$ be the contribution of tuples for which $U_{1}$ is a natural number.
\begin{lem}\label{lem605d}
Let $a,b,c\in\mathbb{R}_{+}$ such that $a< c\leq b$. Then one has
\begin{align*}\label{riips}I_{5}(T)=J_{3}(T)+J_{4}(T)+M_{1}(T)+O\big(T^{3/4}(\log T)+T^{1/2+a/2c}(\log T)^{2}+T^{5/4-c/4a}\big).
\end{align*} 
\end{lem}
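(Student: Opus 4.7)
The plan is to start from Lemma \ref{lem6054}, which already supplies $I_5(T) = P(T) + O\big(T^{3/4}\log T + T^{1/2+a/(2c)}(\log T)^2\big)$, with $P(T)$ the sum displayed in (\ref{S1}). One then splits $P(T) = K_1(T) + M_1(T)$ by the integrality of $U_1 = n_2^{b/a} n_3^{c/a}$, exactly as the text already prepares: on the resonant piece $K_1(T)$ one has $e(n_1 U_1) = 1$, so the $n_1$-sum is oscillation-free and reduces to a counting problem. The remaining task is to extract from $K_1(T)$ the two diagonal contributions $J_3(T) + J_4(T)$, absorbing everything else into the claimed error.

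For each $(n_2, n_3)$ with $U_1 \in \mathbb{N}$, I would rewrite the weight $n_2^{b/(2a)-1/2} n_3^{c/(2a)-1/2}$ as $U_1^{1/2}(n_2 n_3)^{-1/2}$, which is independent of $n_1$. Unpacking $\bfn \in \mathcal{B}_{a,b,c}$ and $N_{\bfn} \leq c_{\bfn} \leq T$ translates the three constituents of $N_{\bfn} = 2\pi\max(n_1^2/a, n_2^2/b, n_3^2/c)$ into $n_1 \leq U_1$, $n_1 \geq a n_2^2/(bU_1)$ and $n_1 \geq a n_3^2/(cU_1)$, while $c_{\bfn} \leq T$ produces $n_1 \leq aT_1/U_1$ and $\bfn \in \mathcal{B}$ yields $n_1 \leq Q$. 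Observing that $U_1 \leq Q \Longleftrightarrow U_1 \leq aT_1/U_1$, the effective window is $L \leq n_1 \leq \min(U_1, aT_1/U_1)$ with $L = \max(1, an_2^2/(bU_1), an_3^2/(cU_1))$, and the number of admissible integers in this window is $\min(U_1, aT_1/U_1) - L + O(1)$.

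Next I match the main terms. The $\min$-contribution splits at $U_1 = Q$: for $U_1 \leq Q$ it gives $2\pi a^{-1}\sum U_1^{3/2}(n_2 n_3)^{-1/2}$, and since $a < c \leq b$ forces $n_1 = U_1 \geq n_2, n_3$ on the diagonal, so that $n_1'$ is the maximum among the $n_i'$, one has $N_{\bfn} = 2\pi U_1^2/a$ and this sum equals $J_4(T)$ on the nose. For $U_1 > Q$ the $\min$-contribution yields $T\sum(U_1 n_2 n_3)^{-1/2}$, which is precisely $J_3(T)$, because $U_1 > Q$ is equivalent, on the diagonal, to $\bfn \notin \mathcal{B}$. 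This is the backbone of the identification $K_1(T) = J_3(T) + J_4(T) + (\text{errors})$.

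It then remains to control the errors. The $O(1)$ per pair contributes $\sum(n_2 n_3)^{-1/2} U_1^{1/2}$ over admissible $(n_2, n_3)$, and a computation mimicking the treatment of $E_2(T)$ in Lemma \ref{lem6054} bounds this by $T^{1/2+a/(2c)}$, which is swallowed by the existing error. The $L$-subtraction breaks into three pieces according to which of $1$, $an_2^2/(bU_1)$, $an_3^2/(cU_1)$ dominates; the heaviest, coming from $L = an_3^2/(cU_1)$ on the range $U_1 > Q$, reduces to $\sum n_2^{-1/2-b/(2a)} n_3^{3/2-c/(2a)}$, where the $n_2$-sum converges at once and the $n_3$-sum has to be truncated at the admissible ceiling $\sqrt{cT_1}$ (since $3/2 - c/(2a) > -1$ whenever $a < c < 5a$), giving exactly $T^{5/4-c/(4a)}$. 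The principal technical difficulty lies in this last step: the case analysis identifying which constituent of $L$ governs each subregion, reminiscent of the split $E_{1,1}, E_{1,2}, E_{1,3}$ in Lemma \ref{lem6054}, must be performed carefully to verify that $T^{5/4 - c/(4a)}$ is the worst of the admissible $L$-contributions and that all other pieces fit inside $T^{3/4}\log T + T^{1/2+a/(2c)}(\log T)^2$.
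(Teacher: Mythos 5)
Your proposal follows essentially the same route as the paper: start from Lemma \ref{lem6054}, extract the resonant part $K_1(T)$ of $P(T)$ where $U_1\in\mathbb{N}$, rewrite the weight as $U_1^{1/2}(n_2n_3)^{-1/2}$, resolve the $n_1$-count as $\min(U_1,aT_1/U_1)-\max(1,M_2,M_3)+O(1)$, and identify the two pieces of the $\min$ with $J_4(T)$ (for $U_1\leq Q$) and $J_3(T)$ (for $U_1>Q$) while showing the floor/ceiling and lower-endpoint corrections are absorbed into $T^{1/2+a/2c}(\log T)^2$ and $T^{5/4-c/4a}$, exactly as the paper does with its $K_1'(T)$, $L(T)$, $K_{1,1}$, $K_{1,2}$ and $R_{a,b,c}(T)$. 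The only caveat is that the $\min$-contribution over $U_1>Q$ is not \emph{literally} $J_3(T)$ — the latter has no ceiling $U_1\leq aT_1$ — but the tail $U_1>aT_1$ of $J_3$ is easily seen to be $O(T^{1/2+a/2c}\log T)$, so your bound holds once this detail (which the paper implicitly folds into the same error term) is included.
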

\begin{proof}
We observe first that an application of the preceding lemma enables one to deduce the formula
\begin{equation}\label{TT5} I_{5}(T)=K_{1}(T)+M_{1}(T)+O\big(T^{1/2+a/2c}(\log T)^{2}+T^{3/4}\log T\big).\end{equation} It seems worth noting in view of the restrictions underlying the sum in (\ref{S1}) that the cognate triples satisfy the inequality $\max(M_{2},M_{3})\leq n_{1}$, wherein
$$M_{2}=ab^{-1}n_{2}^{2-b/a}n_{3}^{-c/a},\ \ \ \ \ \ \ \ \ M_{3}=ac^{-1}n_{3}^{2-c/a}n_{2}^{-b/a},$$ it being non-trivial only when $c<2a.$ Therefore, it transpires upon defining the sum \begin{equation*}L(T)=\sum_{n_{2}^{b/a}n_{3}^{c/a}\leq aT_{1}}\max(M_{2},M_{3})n_{2}^{b/2a-1/2}n_{3}^{c/2a-1/2}\ \ \ \ \ \ \ \ \ \  \ \ \end{equation*} that then 
\begin{equation}\label{K1L}K_{1}(T)=K_{1}'(T)+O(L(T)),\end{equation} wherein the above equation the term $K_{1}'(T)$ denotes the sum in (\ref{S1}) over tuples without the aforementioned restriction on $n_{1}$. We observe that then
\begin{align*}L(T)&\ll \sum_{n_{2},n_{3}\ll T^{1/2}}n_{2}^{3/2-b/2a}n_{3}^{-c/2a-1/2}+\sum_{n_{2},n_{3}\ll T^{1/2}}n_{2}^{-b/2a-1/2}n_{3}^{3/2-c/2a}\ll 1+T^{5/4-c/4a}.\end{align*}

We shift our attention to the analysis of the main term in (\ref{K1L}) and write for convenience \begin{equation*}K_{1}'(T)=K_{1,1}(T)+K_{1,2}(T),\end{equation*} where $K_{1,1}(T)$ is defined upon recalling (\ref{N1N1}) as the corresponding sum over the tuples satisfying $U_{1}\leq \sqrt{aT_{1}}$, the term $K_{1,2}(T)$ denoting the sum over tuples with the property that $U_{1}>\sqrt{aT_{1}}$. Therefore, one then has 
\begin{equation}\label{K11}K_{1,1}(T)=J_{4}(T)+O\big(R_{a,b,c}(T)\big),\end{equation} wherein
\begin{equation*}R_{a,b,c}(T)=\sum_{\substack{U_{1}^{a}=n_{2}^{b}n_{3}^{c}\\ U_{1}\leq aT_{1}}}U_{1}^{1/2}n_{2}^{-1/2}n_{3}^{-1/2},\end{equation*} the term $J_{4}(T)$ having been defined in (\ref{0.1212}). It is then apparent that 
$$R_{a,b,c}(T)\ll \sum_{\substack{n_{2}^{b/a}n_{3}^{c/a}\ll T}}n_{2}^{b/2a-1/2}n_{3}^{c/2a-1/2},$$whence by summing first over $n_{3}$ one has that
$$R_{a,b,c}(T)\ll T^{1/2+a/2c}\sum_{\substack{n_{2}\ll T^{1/2}}}n_{2}^{-1/2-b/2c}\ll T^{1/2+a/2c}(\log T).$$ Combining the preceding discussion with equation (\ref{K11}) then delivers  \begin{equation*}K_{1,1}(T)=J_{4}(T)+O\big(T^{1/2+a/2c}(\log T)\big).\end{equation*} We recall the term $J_{3}(T)$, it being introduced in (\ref{0.1212}) to the end of noting in a similar manner that one then has 
\begin{equation*}K_{1,2}(T)= J_{3}(T)+O\big(T^{1/2+a/2c}(\log T)\big),\end{equation*} a combination of the above equations in conjunction with (\ref{TT5}) yielding the desired result.
\end{proof}

The following lemma shall utilise the above discussion to sharpen the estimate obtained in Lemma \ref{lem6054} conditionally on the validity of Conjecture \ref{conj11}.

\begin{lem}\label{lem605}
Let $a,b,c\in\mathbb{N}$ with the property that $a< c\leq b$. Assuming Conjecture \ref{conj11} one has
$$M_{1}(T)\ll T^{1/2+a/(a+c)+\varepsilon},$$ whence it transpires that
\begin{align*}I_{5}(T)=J_{3}(T)+J_{4}(T)+O\big(T^{3/4}(\log T)+T^{1/2+a/(a+c)+\varepsilon}\big).
\end{align*}
\end{lem}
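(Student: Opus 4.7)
My plan is to deduce the asymptotic for $I_{5}(T)$ from Lemma \ref{lem605d} as soon as the bound $M_{1}(T)\ll T^{1/2+a/(a+c)+\varepsilon}$ is available. First I would verify that the other error terms appearing in Lemma \ref{lem605d}, namely $T^{1/2+a/(2c)}(\log T)^{2}$ and $T^{5/4-c/(4a)}$, are absorbed into $T^{1/2+a/(a+c)+\varepsilon}$ under the hypothesis $a<c$: the first dominance amounts to the elementary inequality $a/(2c)<a/(a+c)$, and the second reduces to checking $\gamma/4-3/4+1/(1+\gamma)\geq 0$ for $\gamma=c/a>1$, which follows from a brief convexity analysis at $\gamma=1$.

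The heart of the argument is bounding $M_{1}(T)$ via a spacing estimate extracted from Conjecture \ref{conj11}. For each pair $(n_{2},n_{3})$ entering the sum defining $M_{1}(T)$, the quantity $U_{1}:=n_{2}^{b/a}n_{3}^{c/a}$ is non-integral, whence on setting $N_{1}=[U_{1}]$ the integer $D=N_{1}^{a}-n_{2}^{b}n_{3}^{c}$ is nonzero. Conjecture \ref{conj11} in the form (\ref{D1D222}) then gives $|D|\gg N_{1}^{a-1-\varepsilon}(n_{2}n_{3})^{-1}$, which combined with the mean value theorem applied to $x\mapsto x^{a}$ on $[\min(N_{1},U_{1}),\max(N_{1},U_{1})]$ (using $U_{1}^{a}=n_{2}^{b}n_{3}^{c}$ and $\xi\asymp N_{1}\asymp U_{1}$) yields the spacing bound $\|U_{1}\|\gg T^{-\varepsilon}(n_{2}n_{3})^{-1}$, and hence $\|U_{1}\|^{-1}\ll T^{\varepsilon}n_{2}n_{3}$. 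The inner sum $\sum_{n_{1}}e(n_{1}U_{1})$ is then a geometric progression of modulus-one terms over an interval of length $L\ll\min(U_{1},T/U_{1})$, and so is bounded by $\min(L,\|U_{1}\|^{-1})\ll\min(T/U_{1},T^{\varepsilon}n_{2}n_{3})$.

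I would next split the outer sum over $(n_{2},n_{3})$ according to whether $n_{2}^{(a+b)/a}n_{3}^{(a+c)/a}\leq T$ or not. In the former region the bound $T^{\varepsilon}n_{2}n_{3}$ applies, leading to $T^{\varepsilon}\sum n_{2}^{b/(2a)+1/2}n_{3}^{c/(2a)+1/2}$, which I would estimate by summing in $n_{2}$ first (exponent positive, partial sum dominated by its upper endpoint) and then in $n_{3}\leq T^{a/(a+c)}$. In the complementary region the bound $T/U_{1}$ applies, delivering $T\sum n_{2}^{-b/(2a)-1/2}n_{3}^{-c/(2a)-1/2}$; here the assumption $c>a$ ensures $c/(2a)+1/2>1$, so the tail over $n_{3}>(T/n_{2}^{(a+b)/a})^{a/(a+c)}$ converges, after which the remaining sum over $n_{2}$ is controlled by the proviso $b\geq c$. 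A short algebraic manipulation with $\beta=b/a$ and $\gamma=c/a$, essentially the identity $(\beta+1)(\gamma+3)=2(\beta+1)(\gamma+1)\cdot(1/2+1/(\gamma+1))$, confirms that both regimes contribute exactly $T^{1/2+a/(a+c)+\varepsilon}$. The main obstacle is precisely this bookkeeping: making both splits match the target exponent, with the borderline instance $b=c$ producing at worst a $\log T$ factor that the $T^{\varepsilon}$ absorbs.
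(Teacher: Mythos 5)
Your proposal is correct and follows essentially the same route as the paper: it bounds the inner geometric sum over $n_{1}$, derives the spacing estimate $\norm{U_{1}}^{-1}\ll T^{\varepsilon}n_{2}n_{3}$ from Conjecture \ref{conj11} together with the mean value theorem applied to $x\mapsto x^{a}$, splits the $(n_{2},n_{3})$-sum at $n_{2}^{a+b}n_{3}^{a+c}=T^{a}$, and absorbs the remaining error terms of Lemma \ref{lem605d} via the same comparison of exponents.
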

\begin{proof}
We recall (\ref{N1N1}) and the definitions right after the end of the proof of Lemma \ref{lem6054} for the purpose of noting first that the sum over $n_{1}$ in the definition for $M_{1}(T)$ is a sum of a geometric progression, whence
$$M_{1}(T)\ll \sum_{n_{2}^{b/a}n_{3}^{c/a}\ll T}n_{2}^{b/2a-1/2}n_{3}^{c/2a-1/2}\min\big(\norm{U_{1}}^{-1},T/U_{1}\big),$$
wherein the above sum we omitted writing for the sake of concision that $n_{2}^{b/a}n_{3}^{c/a}$ is not an integer. In order to progress in the proof we recall (\ref{Nbc}) and denote for further convenience $D= n_{2}^{b}n_{3}^{c}-N_{1}^{a}\neq 0,$ the last property stemming from the above assumption. Then one may apply the mean value theorem to obtain 
$$\norm{n_{2}^{b/a}n_{3}^{c/a}}=\lvert (N_{1}^{a}+D)^{1/a}-N_{1}\rvert\asymp \lvert D\rvert N_{1}^{1-a},$$ whence the inequality $$\lvert D\rvert\gg N_{1}^{a-1-\varepsilon}(n_{2}n_{3})^{-1},$$ it in turn being a consequence of Conjecture \ref{conj11}, in conjunction with the above bound, yields the estimate
\begin{equation*}\norm{U_{1}}^{-1}\ll (n_{2}n_{3})^{1+\varepsilon}.\end{equation*} By the preceding discussion one gets
$$M_{1}(T)\ll \sum_{n_{2}^{b/a}n_{3}^{c/a}\ll T}n_{2}^{b/2a-1/2}n_{3}^{c/2a-1/2}\min\big((n_{2}n_{3})^{1+\varepsilon},T/U_{1}\big).$$

We shall divide the sum into parts for convenience. We denote by $M_{1,1}(T)$ to the contribution to $M_{1}(T)$ of tuples satisfying $n_{2}^{a+b}n_{3}^{a+c}\leq T^{a}.$ Then one has that
\begin{align*}M_{1,1}(T)&\ll T^{\varepsilon}\sum_{\substack{n_{2}^{a+b}n_{3}^{a+c}\leq T^{a}}}n_{2}^{1/2+b/2a}n_{3}^{1/2+c/2a}
\\
&\ll T^{(b+3a)/2(a+b)+\varepsilon}\sum_{n_{3}\leq T^{a/(a+c)}}n_{3}^{-(a+c)/(a+b)}\ll T^{1/2+a/(a+c)+\varepsilon}.\end{align*} Likewise, let $M_{1,2}(T)$ denote the contribution to $M_{1}(T)$ of tuples with the property that $T^{a}<n_{2}^{a+b}n_{3}^{a+c}.$ Then one readily sees that
\begin{align*}M_{1,2}(T)&\ll
T\sum_{\substack{T^{a}<n_{2}^{a+b}n_{3}^{a+c}}}n_{2}^{-1/2-b/2a}n_{3}^{-1/2-c/2a}
\\
&\ll T^{1+(a-b)/2(a+b)}\sum_{n_{3}^{a+c}\leq T^{a}}n_{3}^{-(a+c)/(a+b)}+T\sum_{T^{a}< n_{3}^{a+c}}n_{3}^{-1/2-c/2a} \ll T^{1/2+a/(a+c)},\end{align*} the combination of the preceding estimates thereby delivering
$$M_{1}(T)\ll T^{1/2+a/(a+c)+\varepsilon}.$$

We shall conclude the proof of the lemma by inserting the above estimates in the equation on the statement of Lemma \ref{lem605d} in conjunction with the observation that \begin{align}\label{prifs}5/4-c/4a&=\frac{(5a-c)(a+c)}{4(a+c)a}=\frac{(5a-c)(a+c)-6a^{2}-2ac}{4(a+c)a}+1/2+\frac{a}{a+c}\nonumber
\\
&=\frac{-(a-c)^{2}}{4(a+c)a}+1/2+\frac{a}{a+c}<1/2+\frac{a}{a+c}.
\end{align}

\end{proof}

The reader shall rest assured that there is no substantial cancellation arising from the sum of both $M_{1}(T)$ and $J_{2,2}(T)$ defined in (\ref{J22}). The following lemma will be devoted to unconditionally sharpen the above analysis for the case $a=1$, it being convenient to recall (\ref{0.1212}) to such an end, for the purpose of improving the corresponding error terms and giving account of secondary terms when $a=c=1$.
\begin{lem}\label{lemr}
Let $a=1$ and $b,c\in\mathbb{N}$ having the property that $b\geq c> 1$. Then one has that
\begin{align*}I_{5}(T)=J_{3}(T)+J_{4}(T)+O\big(T^{3/4}(\log T)^{2}\big).
\end{align*}
\end{lem}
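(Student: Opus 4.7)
The plan is to invoke Lemma \ref{lem605d} directly with $a=1$, which yields the decomposition
\[
I_5(T) = J_3(T) + J_4(T) + M_1(T) + O\big(T^{3/4}\log T + T^{1/2 + a/(2c)}(\log T)^2 + T^{5/4 - c/(4a)}\big),
\]
and then to argue that under the present hypotheses the term $M_1(T)$ collapses to zero while all three error contributions are absorbed into $O(T^{3/4}(\log T)^2)$. The condition $a < c \leq b$ required to invoke Lemma \ref{lem605d} is automatic because $c > 1$ forces $c \geq 2$ and $b \geq c$ by hypothesis.

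The crucial observation is that upon setting $a=1$ in the definition in (\ref{N1N1}) one obtains $U_1 = n_2^{b/a} n_3^{c/a} = n_2^{b} n_3^{c}$, which is a positive integer for every triple $(n_1,n_2,n_3) \in \mathbb{N}^3$. Since $M_1(T)$ was defined as the contribution to $P(T)$ arising precisely from those tuples for which $U_1$ is \emph{not} an integer, this sum is empty, and therefore $M_1(T) = 0$ identically. This is the key simplification distinguishing the present case from the setting of Lemma \ref{lem605}: the bound on $M_1(T)$ that there required a diophantine input of the strength of Conjecture \ref{conj11} is rendered unnecessary here, because the fractional part $\|U_1\|$ never forces one to quantify how close the power $n_2^{b/a} n_3^{c/a}$ comes to being integral.

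It remains only to confirm the claim that the error terms from Lemma \ref{lem605d} all sit within $O(T^{3/4}(\log T)^2)$. Since $c \geq 2$, one has $1/(2c) \leq 1/4$, whence
\[
T^{1/2 + 1/(2c)}(\log T)^2 \leq T^{3/4}(\log T)^2,
\]
and similarly $5/4 - c/4 \leq 3/4$ yields $T^{5/4 - c/4} \leq T^{3/4}$. The remaining $T^{3/4} \log T$ is trivially dominated as well. Combining these with the vanishing of $M_1(T)$ yields the statement of the lemma. There is essentially no obstacle in this argument; the entire content of the lemma is that the genuinely difficult term in the analysis of $I_5(T)$ disappears automatically for $a=1$ by integrality alone, with no need for diophantine approximation.
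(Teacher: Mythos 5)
Your proof is correct and follows exactly the paper's own argument: apply Lemma \ref{lem605d}, observe that $U_{1}=n_{2}^{b}n_{3}^{c}$ is always an integer when $a=1$ so that $M_{1}(T)$ vanishes identically, and check that since $c\geq 2$ the three error contributions $T^{3/4}\log T$, $T^{1/2+1/(2c)}(\log T)^{2}$ and $T^{5/4-c/4}$ are all $O\big(T^{3/4}(\log T)^{2}\big)$.
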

\begin{proof}
Recalling the term $M_{1}(T)$ defined above the statement of Lemma \ref{lem605d} and equation (\ref{S1}) we observe that whenever $a=1$ then $M_{1}(T)=0$. The lemma follows applying Lemma \ref{lem605d} and noting that the ensuing error terms in this context are  $O\big(T^{3/4}(\log T)^{2}\big).$
\end{proof}

The remaining discussion in this section shall be devoted to refine the error term stemming from Lemma \ref{lem6054} for the instance $a=c=1$, it having been thought pertinent to defer the explicit computation of the corresponding main terms to a later point in the memoir.
\begin{lem}\label{lemot}
Let $a=c=1$ and $b\in\mathbb{N}$. Then one has that
$$I_{5}(T)=K_{1}(T)+O\big(T^{3/4}\log T\big)$$ when $b\neq 2$, the term $K_{1}(T)$ having been defined right after the statement of Lemma \ref{lem605} and amounting in this context to
$$K_{1}(T)=2\pi\sum_{N_{\bfn}\leq c_{\bfn}\leq T}n_{2}^{b/2-1/2}n_{3}^{c/2-1/2}.$$ If $b=2$ then the same formula replacing the above error term by $O(T^{3/4+\varepsilon})$ holds.
\end{lem}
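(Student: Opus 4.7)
The plan is to return to the proof of Lemma~\ref{lem6054}, specialized to $a=c=1$, and sharpen the boundary error term $E_{2}(T)$ occurring in that argument. When $a=c=1$ the quantity $U_{1}=n_{2}^{b}n_{3}$ is always a positive integer, so the exponential factor $e(n_{1}n_{2}^{b/a}n_{3}^{c/a})=e(n_{1}U_{1})$ appearing in the main sum of Lemma~\ref{lem6054} equals $1$ throughout, and that main sum collapses to the stated $K_{1}(T)=2\pi\sum n_{2}^{(b-1)/2}$. The error $R(T)$ from Lemma~\ref{lem6054} is controlled by $T^{3/4}+E_{1}(T)+E_{2}(T)$, and the bound $E_{1}(T)\ll T^{3/4}\log T$ established in that proof carries over unchanged to the present setting. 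The task is therefore reduced to showing $E_{2}(T)\ll T^{3/4}\log T$ when $b\neq 2$, and $E_{2}(T)\ll T^{3/4+\varepsilon}$ when $b=2$.

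To bound $E_{2}(T)$, I would group tuples $\bfn$ with $c_{\bfn}=2\pi n_{1}n_{2}^{b}n_{3}\in[T/2,2T]$ according to the integer $m=n_{1}n_{2}^{b}n_{3}$, using the identity $(n_{1}n_{2}n_{3})^{-1/2}=m^{-1/2}n_{2}^{(b-1)/2}$ and $m\asymp T_{1}$ to rewrite
\[
E_{2}(T)\ll T^{-1/2}\sum_{m\asymp T_{1}}r_{b}(m)\min\Big(\frac{T}{2\pi|m-T_{1}|},\,T^{1/2}\Big),\qquad r_{b}(m)=\sum_{n_{2}^{b}\mid m}n_{2}^{(b-1)/2}r(m/n_{2}^{b}),
\]
where $r(k)\leq d(k)$ denotes the number of ordered factorisations $k=n_{1}n_{3}$ with $n_{1},n_{3}\leq\sqrt{T_{1}}$. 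A dyadic decomposition on $|m-T_{1}|\asymp H$, summed against $T/H$, then reduces matters to the short-interval estimate $S_{b}(H):=\sum_{|m-T_{1}|\leq H}r_{b}(m)$.

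For $b=1$ one has $r_{1}\leq d_{3}$, and the classical short-interval estimate (of Shiu type) $S_{1}(H)\ll H(\log T)^{2}$ — valid for $H\gg T^{1/3}$, hence certainly for $H\geq T^{1/2}/(2\pi)$ — yields $E_{2}(T)\ll T^{1/2}(\log T)^{3}$. For $b\geq 2$, interchanging the $m$- and $n_{2}$-sums and applying the trivial short-interval divisor bound $\sum_{k\in J}d(k)\ll|J|\log T+T^{\varepsilon}$ produces
\[
S_{b}(H)\ll H\log T\sum_{n_{2}}n_{2}^{-(b+1)/2}+T^{\varepsilon}\sum_{n_{2}\leq T^{1/b}}n_{2}^{(b-1)/2}\ll H\log T+T^{(b+1)/(2b)+\varepsilon},
\]
and dyadic summation produces $E_{2}(T)\ll T^{3/4}\log T$ when $b\geq 3$ (since then $(b+1)/(2b)<3/4$), and $E_{2}(T)\ll T^{3/4+\varepsilon}$ precisely when $b=2$.

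The main obstacle is the coincidence of exponents at $b=2$, where $(b+1)/(2b)=3/4$. The transition between the regime in which the contracted interval $I/n_{2}^{b}$ contains $\gg 1$ integers (requiring $n_{2}\lesssim T^{1/(2b)}=T^{1/4}$) and that in which it contains at most one (handled via the trivial bound $d(k)\ll k^{\varepsilon}$) takes place precisely at $n_{2}\asymp T^{1/4}$, and the tail contribution $T^{\varepsilon}\sum_{T^{1/4}<n_{2}\leq T^{1/2}}n_{2}^{1/2}\asymp T^{3/4+\varepsilon}$ is consequently unavoidable. Any sharper conclusion would demand finer equidistribution information for the divisor function in intervals of length $O(1)$, which is exactly what the slightly weaker exponent in the statement at $b=2$ reflects.
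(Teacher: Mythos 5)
Your proposal is correct and follows essentially the same route as the paper: the main term collapses to $K_1(T)$ since $M_1(T)=0$ when $a=c=1$, the bound $E_1(T)\ll T^{3/4}\log T$ carries over verbatim, and the remaining work is to bound $E_2(T)$ via divisor-weighted sums over the product $n_1n_2^bn_3\asymp T_1$, with the same threshold exponent $T^{(b+1)/(2b)+\varepsilon}$ that collides with $T^{3/4}$ precisely at $b=2$. The only cosmetic differences are that you organize the sum directly by $m=n_1n_2^bn_3$ for all $b$ and invoke a Shiu-type short-interval bound for $d_3$ in the case $b=1$, whereas the paper fixes $n_2$ first, sums over $n=n_1n_3$ with $d(n)\ll n^\varepsilon$ for $b>1$, and simply uses $d_3(n)\ll n^\varepsilon$ when $b=1$; both versions land comfortably within the stated $O(T^{3/4}\log T)$ for $b\neq 2$.
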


\begin{proof}
We shall focus first on the case $b>1$. Equation (\ref{S1}) in conjunction with (\ref{EEE}) enables one to reduce the problem to that of bounding $E_{2}(T)$ appropiately. To such an end, we introduce for each integer $n_{2}\in\mathbb{N}$ the parameter $\Lambda_{3}=T_{1}n_{2}^{-b}$ and observe that upon recalling (\ref{F44}) it transpires that then $$F_{5}'(T)=\log n_{1}n_{3}-\log \Lambda_{3}.$$We then note that whenever $T/2\leq c_{\bfn}\leq 2T$ then $n_{1}n_{3}\asymp \Lambda_{3}$. We remind the reader of the definition of (\ref{EjE}) for the purpose of deducing the estimate
\begin{align*}E_{2}(T)&\ll \sum_{T/2\leq c_{\bfn}\leq 2T}P_{\bfn}^{-1/2}\min\big(\lvert \log(n_{1}n_{3}/\Lambda_{3})\rvert^{-1},T^{1/2}\big)
\\
&\ll \sum_{\substack{n\asymp \Lambda_{3}\\ \lvert n-\Lambda_{3}\rvert\geq 1}}\Lambda_{3}^{1/2}\frac{d(n)n_{2}^{-1/2}}{\lvert n-\Lambda_{3}\rvert}+T^{1/2+\varepsilon}\sum_{n_{2}^{b}\ll T}\Lambda_{3}^{-1/2}n_{2}^{-1/2}
\\
&\ll T^{1/2+\varepsilon}\sum_{n_{2}^{b}\ll T}n_{2}^{-b/2-1/2}+T^{\varepsilon}\sum_{n_{2}^{b}\ll T}n_{2}^{b/2-1/2}\ll T^{1/2+1/2b+\varepsilon},
\end{align*}
which completes the proof in the aforementioned context. The lemma for the instance $b=c=1$ follows by utilising routinary arguments to deduce
$$E_{2}(T)\ll \sum_{n\asymp T} d_{3}(n)n^{-1/2}\min\big(\lvert \log(n/T_{1})\rvert^{-1},T^{1/2}\big)\ll T^{1/2+\varepsilon},$$ wherein $d_{3}(n)$ denotes the number of representations of $n$ as a product of three positive integer.
\end{proof}

\section{An intermediate estimate and proof of Theorem \ref{thm601}}\label{sec606}
  
As opposed to the treatment in the previous section, the application of Titchmarsh \cite[Lemmata 4.2 and 4.4]{Tit} shall already suffice to obtain a suitable bound for the term $I_{6}(T)$, it being defined by means of the sum \begin{equation}\label{I66}I_{6}(T)=Y_{6,b,c}(T)+Y_{6,c,b}(T),\end{equation} where for tuples $(r,s)\in\mathbb{R}_{+}^{2}$ the above summands are 
$$Y_{6,r,s}(T)=\int_{0}^{T}D(1/2-ait)D(1/2+rit)D(1/2-sit)\chi(1/2+ait)\chi(1/2-rit)dt.$$ 

\begin{lem}\label{lem606}
Let $(r,s)\in\mathbb{R}_{+}^{2}$ such that $r>a$ and $s\geq a$. Then one has $$Y_{6,r,s}(T)\ll T^{5/4-r/4a}(\log T)^{\tau}+T^{3/4},$$ wherein $\tau=1$ if $s=a$ and $\tau=0$ if $s>a$. In particular, it transpires when $a<c\leq b$ that
$$I_{6}(T)\ll T^{5/4-c/4a}+T^{3/4}.$$
\end{lem}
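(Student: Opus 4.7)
The plan is to mimic the template of Lemmata~\ref{lem604} and \ref{lem6054}: approximate both $\chi$ factors using Lemma~\ref{lem601}, expand the three Dirichlet polynomials, and estimate the resulting oscillatory sum via Titchmarsh \cite[Lemmata 4.2, 4.4]{Tit}. First, I would replace $\chi(1/2+ait)$ and $\chi(1/2-rit)$ in $Y_{6,r,s}(T)$ by their leading terms, with the multiplicative $O(1/t)$ errors absorbed into an $O(T^\varepsilon)$ tail by the standard Hölder-plus-second-moment argument deployed after (\ref{lbr}) via (\ref{rrio}). This reduces matters to
\[
Y_{6,r,s}(T) = c \sum_{\bfn \in \mathcal{B}_{a,r,s}} P_{\bfn}^{-1/2} \int_{N_{\bfn}}^{T} e^{iF_{6}(t)}\,dt + O(T^{\varepsilon}),
\]
for some constant $c$, where a direct computation yields $F_{6}'(t) = (r-a)\log(t/2\pi) + r\log r - a\log a + a\log n_{1} - r\log n_{2} + s\log n_{3}$ and $F_{6}''(t) = (r-a)/t > 0$. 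Hence $F_{6}'$ is strictly increasing with unique zero $d_{\bfn} = 2\pi\bigl(n_{2}^{r} a^{a}/(n_{1}^{a} n_{3}^{s} r^{r})\bigr)^{1/(r-a)}$.

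Next, I would partition the tuples $\bfn$ according to whether $d_{\bfn} \in [N_{\bfn}, T]$. In the stationary case, Titchmarsh \cite[Lemma 4.4]{Tit} yields $\bigl|\int_{N_\bfn}^T e^{iF_6}\,dt\bigr| \ll \sqrt{T/(r-a)} \ll T^{1/2}$; otherwise $F_{6}'$ has constant sign throughout the interval and \cite[Lemma 4.2]{Tit} gives a bound by $|F_{6}'|_{\min}^{-1}$ at the appropriate endpoint. Setting $n_{2}^{*} := ra^{-a/r} T_{1}^{(r-a)/r} n_{1}^{a/r} n_{3}^{s/r}$, one has $|F_{6}'(T)| = r|\log(n_{2}/n_{2}^{*})|$, and the stationary condition $d_{\bfn} \leq T$ is equivalent to $n_{2} \leq n_{2}^{*}$. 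Summing $T^{1/2} P_{\bfn}^{-1/2}$ over stationary tuples, with the inner sum $\sum_{n_{2}} n_{2}^{-1/2}$ controlled by $(n_{2}^{*})^{1/2}$ and the outer sum $\sum_{n_{1}\leq Q} n_{1}^{-1/2 + a/(2r)} \ll T^{1/4 + a/(4r)}$ together with the $n_{3}$-sum (which after the interchange collapses to $\sum n_{3}^{-1/2 - s/(2a)}$, so $O(1)$ when $s > a$ and $O(\log T)$ when $s = a$, exactly accounting for $\tau$), produces the stated stationary contribution $T^{5/4 - r/(4a)}(\log T)^{\tau}$. The non-stationary tuples are handled by bounding $|F_6'|^{-1}$ against a harmonic series in $|n_2 - n_2^*|$, precisely as in the passage surrounding (\ref{N1aa}), which delivers the residual $O(T^{3/4})$.

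Finally, for the second assertion, $I_{6}(T) = Y_{6,b,c}(T) + Y_{6,c,b}(T)$ in the regime $a < c \leq b$: since the exponent $5/4 - r/(4a)$ is decreasing in $r$, the dominant summand is $Y_{6,c,b}$, giving $T^{5/4 - c/(4a)} + T^{3/4}$; no logarithmic factor arises since $s > a$ in each of the two summands. The principal obstacle is the stationary-phase summation itself: the naive estimate $T^{1/2}\sum P_{\bfn}^{-1/2}$ over all of $\mathcal{B}_{a,r,s}$ would give the unacceptable $T^{5/4}$, and one must exploit the interplay between the stationary constraint $n_{2} \leq n_{2}^{*}$, the basic cutoffs $n_{1} \leq (aT_{1})^{1/2}$ and $n_{3} \leq (sT_{1})^{1/2}$, and the lower-bound sub-condition $d_{\bfn} \geq N_{\bfn}$ (which effectively truncates the $n_{2}$-sum from below), so that the $T^{-r/(4a)}$ saving is actually realised; the case analysis $s = a$ versus $s > a$ in the $n_{3}$-summation is what isolates the logarithm governing $\tau$.
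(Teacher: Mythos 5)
Your overall template (approximate both $\chi$ factors, reduce to the oscillatory sum, find the stationary point $d_{\bfn}$, split according to whether $d_{\bfn}\in[N_{\bfn},T]$, sum up) matches the paper's, and your identification of $d_{\bfn}$ and the relation $F_6'(T)=r\log(n_2^*/n_2)$ are correct. But the arithmetic in the stationary case as you have written it does not produce the claimed exponent and in fact is internally inconsistent. Multiplying out the contributions you list, namely $T^{1/2}$ from the integral, $(n_2^*)^{1/2}\asymp T^{(r-a)/(2r)}n_1^{a/(2r)}n_3^{s/(2r)}$ from the $n_2$-sum, and $\sum_{n_1\leq Q}n_1^{-1/2+a/(2r)}\ll T^{1/4+a/(4r)}$, yields a $T$-exponent of $1/2+(r-a)/(2r)+1/4+a/(4r)=5/4-a/(4r)$, which is strictly larger than the target $5/4-r/(4a)$ since $a<r$. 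Moreover, with the $n_1$-sum taken over all of $[1,Q]$, the $n_3$-variable inherits the exponent $-1/2+s/(2r)$, not $-1/2-s/(2a)$: the latter exponent, which you assert ``after the interchange,'' simply does not arise from the steps you describe, and without it the $n_3$-sum would contribute another positive power of $T$.

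The missing ingredient is a constraint on $(n_1,n_3)$. For the stationary point to lie in $[N_{\bfn},T]$ one needs both $d_{\bfn}\lesssim T$ and $d_{\bfn}\gtrsim N_{\bfn}$; in the only regime where stationary points can occur (namely $N_{\bfn}=2\pi n_2^2/r$ with $r<2a$), the second inequality gives $n_2\gg(n_1^a n_3^s)^{1/(2a-r)}$, and together with $n_2\leq\sqrt{rT_1}$ this forces $n_1^a n_3^s\ll T^{a-r/2}$. If you replace your unconstrained $n_1$-sum by $\sum_{n_1\ll T^{1-r/(2a)}n_3^{-s/a}}n_1^{-1/2+a/(2r)}\ll T^{(1-r/(2a))(1/2+a/(2r))}n_3^{-(s/a)(1/2+a/(2r))}$, the $T$-exponent becomes $5/4-r/(4a)$ and the $n_3$-exponent becomes $-1/2+s/(2r)-s/(2a)-s/(2r)=-1/2-s/(2a)$, exactly as needed; this is where the $\log T$ for $s=a$ comes from. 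The paper instead localises the stationary integral to $[c_{\bfn}/2,2c_{\bfn}]$ and bounds it by $c_{\bfn}^{1/2}$ rather than $T^{1/2}$, which turns the $n_2$-exponent into $-1/2+r/(2(r-a))>0$ so the $n_2$-sum is automatically dominated at its upper end, and then splits by whether $n_1^a n_3^s$ is above or below $T^{a-r/2}$ with explicit use of $c_{\bfn}\ll T$ and $n_2\leq\sqrt{rT_1}$; you should also note that the cases $N_{\bfn}=2\pi n_1'^2$ and $N_{\bfn}=2\pi n_3'^2$, as well as $N_{\bfn}=2\pi n_2'^2$ with $r\geq 2a$, produce $F_6'(N_{\bfn})\gg 1$ and thus contribute only $O(T^{3/4})$, which the paper verifies explicitly and your sketch leaves implicit.
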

\begin{proof}
The approximation formulae for $\chi(1/2-rit)$ and $\chi(1/2+ait)$ in Lemma \ref{lem601} in conjunction with the argument following (\ref{lbr}) pertaining to the analysis of the corresponding error term thereof then yield
\begin{equation}\label{I7}Y_{6,r,s}(T)=\sum_{\bfn\in \mathcal{B}_{a,r,s}}P_{\bfn}^{-1/2}\int_{N_{\bfn}}^{T}e^{iF_{6}(t)}dt+O(T^{\varepsilon}),\end{equation}it having been thought convenient for the sake of brevity only to give account of the derivative of the above function, namely
$$F_{6}'(t)=(r-a)\log t+r\log r-a\log a-(r-a)\log2\pi+\log (n_{1}^{a}n_{3}^{s}/n_{2}^{r}).$$ We may discard first the case $N_{\bfn}=2\pi n_{1}^{2}/a$, since then $$F_{6}'(N_{\bfn})\geq (r-a)\log n_{1}'+s\log n_{3}'$$ and a customary application of Titchmarsh \cite[Lemmata 4.2 and 4.4]{Tit} would yield the conclusion that the contribution to (\ref{I7}) corresponding to tuples satisfying such a condition is $O(T^{3/4}).$ 
If instead $N_{\bfn}=2\pi n_{3}^{2}/s$ then $$F_{6}'(N_{\bfn})\geq (r+s-2a)\log n_{3}'+a\log n_{1}',$$ and a routinary application of Titchmarsh \cite[Lemmata 4.2 and 4.4]{Tit} would imply that the contribution to (\ref{I7}) corresponding to tuples satisfying such a condition is $O(T^{3/4}).$

If $N_{\bfn}=2\pi n_{2}^{2}/r$ and $r\geq 2a$ then an analogous argument reveals that $$F_{6}'(N_{\bfn})\geq \log (n_{1}^{a}n_{3}^{s})+(r-2a)\log n_{2}'+\frac{1}{2}r\log r-a\log a,$$ the contribution when both $n_{1},n_{3}$ are bounded or either $n_{1}$ or $n_{3}$ is large enough being $O(T^{3/4})$ by Titchmarsh \cite[Lemmata 4.2, 4.4]{Tit}. We then focus on the instance $r<2a$ and define for convenience the parameter $c_{\bfn}=2\pi \big(a^{a}r^{-r}n_{2}^{r}n_{1}^{-a}n_{3}^{-s}\big)^{1/(r-a)}$, which the reader may check that satisfies $F_{6}'(c_{\bfn})=0$. It seems worth noting that applying Titchmarsh \cite[Lemma 4.4]{Tit} one may deduce that the integral over $[N_{\bfn},T]\cap [c_{\bfn}/2, 2c_{\bfn}]$ is $O(c_{\bfn}^{1/2}).$ Likewise, the integral over the complement of the latter intersection in $[N_{\bfn},T]$ would then be $O(1)$ by Titchmarsh \cite[Lemma 4.2]{Tit}. It may also seem appropiate to remark that were the previous intersection non-empty then one would have $c_{\bfn}\ll T,$ which would in turn imply the inequality
\begin{equation}\label{Ecua1}n_{2}\ll n_{1}^{a/r}n_{3}^{s/r}T^{1-a/r}.\end{equation} We find it desirable to denote by $M_{6}(T)$ to the contribution stemming from the sums over integrals restricted to such intersections. We further divide such a contribution into the one corresponding to tuples $\bfn\in\mathcal{B}_{a,b,c}$ with the property that \begin{equation}\label{Ecua2}n_{1}^{a/r}n_{3}^{s/r}\leq CT^{a/r-1/2}\end{equation} for a suitable constant $C>0$, which will be denoted by $M_{6,1}(T)$, and $M_{6,2}(T)$, that shall denote the contribution stemming from tuples satisfying the converse of (\ref{Ecua2}), the set of which will be referred to by means of the letter $\mathcal{J}_{2}(T)$. For the sake of concision we write $\mathcal{J}_{1}(T)$ for the set of tuples satisfying the inequalities (\ref{Ecua1}) and (\ref{Ecua2}). One then has
\begin{align*}M_{6,1}(T)&
\ll \sum_{\bfn\in\mathcal{J}_{1}(T)}P_{\bfn}^{-1/2}c_{\bfn}^{1/2}\ll \sum_{\bfn\in\mathcal{J}_{1}(T)}n_{1}^{-1/2-a/2(r-a)}n_{2}^{-1/2+r/2(r-a)}n_{3}^{-1/2-s/2(r-a)}
\\
&\ll T^{1-a/2r}\sum_{n_{1}^{a}n_{3}^{s}\ll T^{a-r/2}}n_{1}^{-1/2+a/2r}n_{3}^{-1/2+s/2r}
\\
&\ll T^{1-a/2r+(2a-r)(a+r)/4ar}\sum_{n_{3}\leq \sqrt{cT_{1}}}n_{3}^{-1/2-s/2a}\ll T^{5/4-r/4a}(\log T)^{\tau},
\end{align*}
where $\tau$ was defined in the statement of the lemma. The reader should find it worth noting that in the second line we employed the inequality (\ref{Ecua1}) when summing over $n_{2}$. 

The analysis of $M_{6,2}(T)$, though similar in nature, will depart from the previous procedure in that we will instead utilise the bound $n_{2}\leq \sqrt{bT_{1}}$ in due course. We thus obtain
\begin{align*}
M_{6,2}(T)\ll& \sum_{\bfn\in\mathcal{J}_{2}(T)}P_{\bfn}^{-1/2}c_{\bfn}^{1/2}\ll T^{1/4+r/4(r-a)}\sum_{n_{1}^{a}n_{3}^{s}\gg T^{a-r/2}}n_{1}^{-1/2-a/2(r-a)}n_{3}^{-1/2-s/2(r-a)}.
\end{align*}
It seems appropiate to remark that in the above lines we utilised the fact that the tuples in $\mathcal{J}_{2}(T)$ satisfy (\ref{Ecua1}) and the converse of the inequality (\ref{Ecua2}). Therefore, summing over $n_{1}$ in the second line of inequalities and recalling the assumption $2a>r$ one gets
\begin{align*}M_{6,2}(T)\ll &T^{5/4-r/4a}\sum_{n_{3}\ll T^{(2a-r)/2s}}n_{3}^{-1/2-s/2a}+ T^{1/4+r/4(r-a)}\sum_{n_{3}\gg T^{(2a-r)/2s}}n_{3}^{-1/2-s/2(r-a)}
\\
\ll& T^{5/4-r/4a}(\log T)^{\tau}+T^{3/4+(2a-r)/4s}.
\end{align*}

The reader may find it worth observing that in view of the aforementioned assumption, it transpires that $r-a<a\leq s,$ whence the exponent of $n_{3}$ in the above sum is smaller than $-1$, such a remark justifying the subsequent line of argumentation thereof. We pause our analysis to examine and compare the bounds already obtained, it being worth noting
\begin{align*}3/4+(2a-r)/4s&
=(2a-r)/4s+r/4a-1/2+5/4-r/4a
\\
&=\frac{r(s-a)-2a(s-a)}{4as}+5/4-r/4a\leq 5/4-r/4a,
\end{align*}
where we used the fact that $r<2a$ and $s\geq a.$ The preceding estimates then yield $$\max\big(M_{6,1}(T),M_{6,2}(T)\big)\ll T^{5/4-r/4a}(\log T)^{\tau},$$ as desired. The second statement follows by recalling (\ref{I66}) and applying the result obtained above for $Y_{6,b,c}(T)$ and $Y_{6,c,b}(T)$.
\end{proof}

\emph{Proof of Theorems \ref{thm601}, \ref{prop3kr}, \ref{thm6025} and \ref{thm1.5}.} 
After the prolix discussion held above, in order to complete the proof of Theorem \ref{thm601} it just suffices to combine equation (\ref{osssa}) with Lemmata \ref{lemita601}, \ref{lem604}, \ref{lem6054}, \ref{lem605} and \ref{lem606} and equation (\ref{piresiti}), the proof of Theorem \ref{prop3kr} being in turn a consequence of the same sequel of results in conjunction with an application of Lemma \ref{lemr} instead of Lemmata \ref{lem6054} and \ref{lem605}. It might be worth noting that \begin{align*}5/4-c/4a&
=1/2-c/4a-a/4c+3/4+a/4c
\\
&=-(c-a)^{2}/4ac+3/4+a/4c<3/4+a/4c.
\end{align*}and that
\begin{equation*}3/4+a/4c=\frac{(a-c)(a-2)}{4ac}+1-1/2a+1/2c\leq 1-1/2a+1/2c\end{equation*} when $a\geq 2$ and $a<c$. Therefore, it transpires that the error term $T^{1-1/2a+1/2c}(\log T)^{\eta}$ stemming from Lemma \ref{prop478} dominates over that of $T^{3/4+a/4c}(\log T)^{\eta}$ from Lemma \ref{lem6054} and $T^{5/4-c/4a}$ arising after an application of Lemma \ref{lem606}. Likewise, the reader may find it useful to recall (\ref{prifs}) and observe that $$1/4+3a/4c<1/2+a/(a+c)$$ whenever $a<c$. Consequently, the term $T^{1/2+a/(a+c)+\varepsilon}$ dominates over $T^{1/4+3a/4c+\varepsilon},$ the latter arising after an application of equation (\ref{osssa1}) under the assumption of Conjecture \ref{conj11}. It also seems worth noting for use in future work that a combination of equation (\ref{osss}) with Lemmata \ref{lem604}, \ref{lem605d} and \ref{lem606} delivers 
\begin{equation}\label{superlabel}I_{a,b,c}(T)=\sigma_{a,b,c}T+M_{1}(T)+J_{2,2}(T)+O\big(T^{3/4}(\log T)+T^{1/2+a/2c}(\log T)^{2}+T^{5/4-c/4a}\big).\end{equation}

We conclude our discussion by mentioning that the proof of Theorem \ref{thm6025} departs from that of Theorem \ref{thm601} in the necessity of an application of Proposition \ref{diagono} instead of equation (\ref{osssa}), the rest of the argument being analogous save the absence of the requirement of Lemma \ref{lem605}. Similarly, Theorem \ref{thm1.5} is derived by utilising Proposition \ref{prop55} instead of Proposition \ref{diagono}.

\section{An application of Roth's theorem on diophantine approximation}\label{sec607}
The upcoming discussion shall be devoted to examine the integrals pertaining to $I_{a,b,a}(T)$ that exhibit a different behaviour from those in the setting of Theorem \ref{thm601}. We begin our journey by drawing the reader's attention back to the equation (\ref{piresiti}), wherein the $I_{j}(T)$ were defined at the beginning of each of the above corresponding sections. We recall (\ref{NNN}) and (\ref{Babc}) and write the formula
\begin{equation}\label{I1TTT}I_{1}(T)=S_{a,b}(T)T+J_{2}(T)-J_{4}(T),\end{equation} where the terms $J_{2}(T)$ and $J_{4}(T)$ were defined in (\ref{0.121}) and (\ref{0.1212}) and \begin{equation*}\label{Sab}S_{a,b}(T)=\sum_{\substack{\bfn\in \mathcal{B}_{a,b,a}\\  n_{1}^{a}=n_{2}^{b}n_{3}^{a}}}P_{\bfn}^{-1/2}.\end{equation*}
\begin{lem}
Whenever $a,b\in\mathbb{N}$ with $a<b$ and $(a,b)=1$ one has
$$I_{1}(T)=\frac{\zeta\big((a+b)/2\big)}{2}T\log T+J_{2}(T)+O(T).$$
\end{lem}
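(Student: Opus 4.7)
The starting point is the identity (\ref{I1TTT}), which reduces the task to evaluating
$$S_{a,b}(T) = \sum_{\substack{\bfn\in \mathcal{B}_{a,b,a}\\ n_1^a = n_2^b n_3^a}} P_{\bfn}^{-1/2}$$
up to $O(1)$ and showing $J_{4}(T)\ll T$. First, I would parametrise the diagonal solutions. The equation $n_1^a=n_2^b n_3^a$ forces $n_3\mid n_1$, so writing $n_1=n_3 k$ gives $k^a=n_2^b$; since $(a,b)=1$, this yields $k=m^{b}$ and $n_2=m^{a}$ for some $m\in\mathbb{N}$. Hence the diagonal solutions are exactly
$$(n_1,n_2,n_3)=(n_3 m^{b},\,m^{a},\,n_3), \qquad m,n_3\in\mathbb{N}.$$
Substituting, $P_{\bfn}^{-1/2}=n_{3}^{-1}m^{-(a+b)/2}$. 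The condition $\bfn\in\mathcal{B}_{a,b,a}$ reduces, after comparing $(aT_1)^{1/(2b)}$ with $(bT_1)^{1/(2a)}$, to $n_3 m^{b}\le \sqrt{aT_1}$, the constraint on $n_2$ being weaker because $a<b$.

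Next, I would evaluate $S_{a,b}(T)$ by the standard estimate $\sum_{n\le X}n^{-1}=\log X+\gamma+O(X^{-1})$ applied to the inner sum over $n_3$, obtaining
$$S_{a,b}(T)=\sum_{m\le (aT_1)^{1/(2b)}} m^{-(a+b)/2}\Bigl(\tfrac{1}{2}\log(aT_1)-b\log m+\gamma\Bigr)+O\bigl(T^{-1/2}\sum_{m\le (aT_1)^{1/(2b)}} m^{(b-a)/2}\bigr).$$
Since $a<b$ with $(a,b)=1$ forces $a+b\ge 3$, the series $\sum m^{-(a+b)/2}$ and $\sum m^{-(a+b)/2}\log m$ both converge, and completing the main sum to infinity produces an error $O(T^{-(a+b-2)/(4b)}\log T)=o(1)$. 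The error sum in the parenthesis is likewise $o(1)$. Hence
$$S_{a,b}(T)=\tfrac{1}{2}\zeta\bigl((a+b)/2\bigr)\log T+O(1),$$
which gives $S_{a,b}(T)\,T=\tfrac{1}{2}\zeta((a+b)/2)\,T\log T+O(T)$.

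Finally, I would estimate $J_{4}(T)$. From the definitions in (\ref{n1prima}) and (\ref{NNN}), and noting that $n_1=n_3 m^b\ge\max(n_2,n_3)$ on the parametrisation, one has $N_{\bfn}\asymp n_1^2=n_3^{2}m^{2b}$. Thus
$$J_{4}(T)\ll \sum_{m\ge 1}m^{(3b-a)/2}\sum_{n_3\le \sqrt{aT_1}/m^{b}} n_3\ll T\sum_{m\ge 1}m^{-(a+b)/2}\ll T,$$
using convergence of $\sum m^{-(a+b)/2}$. Combining these three pieces via (\ref{I1TTT}) yields the claimed asymptotic. The only step requiring any care is the explicit parametrisation of the diagonal, which uses $(a,b)=1$ crucially; everything else is routine partial summation.
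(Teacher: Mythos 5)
Your proof is correct and follows the same route as the paper: you use the decomposition (\ref{I1TTT}), parametrise the diagonal solutions of $n_1^a = n_2^b n_3^a$ as $(n_1,n_2,n_3)=(m_3 m^b, m^a, m_3)$ using $(a,b)=1$, evaluate $S_{a,b}(T)$ by summing the inner $n_3^{-1}$ to get the factor $\tfrac12\log T+O(1)$, and bound $J_4(T)\ll T$. The only cosmetic difference is the order of summation in the $J_4(T)$ estimate (you sum over $n_3$ first; the paper sums over $m_2$ first), but the conclusion is identical.
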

\begin{proof}
We begin the discussion by observing that the solutions of the underlying equation in both $S_{a,b}(T)$ and $J_{4}(T)$ can be parametrized by means of the expressions $$n_{1}=m_{3}m_{2}^{b},\ \ \ \ n_{2}=m_{2}^{a},\ \ \ \ n_{3}=m_{3}.$$ Making use of the above remark we obtain
\begin{align*}\label{Sabab}S_{a,b}(T)=&
\sum_{\substack{m_{2}^{b}m_{3}\leq \sqrt{aT_{1}}}}m_{3}^{-1}m_{2}^{-(a+b)/2}=\frac{\log T}{2}\sum_{m_{2}\leq (aT_{1})^{1/2b}}m_{2}^{-(a+b)/2}
\\
&+O(1)-b\sum_{m_{2}\leq (aT_{1})^{1/2b}}(\log m_{2})m_{2}^{-(a+b)/2},\nonumber
\end{align*}
which then yields
\begin{equation*}S_{a.b}(T)=\frac{\zeta\big((a+b)/2\big)}{2}\log T+O(1).\end{equation*}Likewise, we employ the parametrization at hand in the way alluded above to get
\begin{equation*}J_{4}(T)\ll \sum_{m_{3}m_{2}^{b}\leq \sqrt{aT_{1}}}m_{2}^{(3b-a)/2}m_{3}\ll T^{3/4+1/2b-a/4b}\sum_{m_{3}\leq \sqrt{aT_{1}}}m_{3}^{-1/2-1/b+a/2b}\ll T.\end{equation*}
Combining the preceding equations with (\ref{I1TTT}) delivers the desired result.
\end{proof}
The reader may have noticed that one could have refined the above analysis to obtain lower order terms in the asymptotic formula at hand, such an avenue being further explored in the context underlying Theorem \ref{loweuse}. These improvements on this instance though would have been wrought in vain due to the poor understanding of $J_{2}(T)$ that we have.
\begin{lem}\label{rothi}
Let $a,b\in\mathbb{N}$ with $a<b$ as above. Then
$$J_{2}(T)=o(T\log T).$$ Moreover, upon recalling (\ref{QQQQ}) and (\ref{0.121}), if $a=1$ and $b\geq 1$ then for $0<\theta<1$ one has
$$J_{2}^{\theta}(T)\ll T^{1/2}Q^{1/2}.$$
\end{lem}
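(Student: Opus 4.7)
The plan is to reduce $J_2^\theta(T)$ to the analysis of $J_{2,2}(T)$ by invoking (\ref{effi}) and (\ref{bJ21}), in the spirit of Lemma \ref{prop478}. The novelty compared to that setting is that with $c=a$ the integer spacing bound $|n_2^b n_3^a - N_1^a|\geq 1$ no longer extracts cancellation through (\ref{logilogi}); I would replace it by Roth's theorem applied to the algebraic number $\alpha_{n_2}:=n_2^{b/a}$.

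For the first statement, I begin by observing that tuples with $n_2=m^a$ a perfect $a$-th power contribute nothing to $J_{2,2}(T)$: one then has $N_1=m^b n_3$, so the condition $n_1=N_1$ would force $n_1^a=n_2^b n_3^a$, contrary to the off-diagonal proviso. For the remaining $n_2$, the coprimality $(a,b)=1$ ensures that $\alpha_{n_2}$ is irrational and algebraic of degree $a\geq 2$, so Roth's theorem delivers $\norm{n_3\alpha_{n_2}}\gg_{n_2,\varepsilon} n_3^{-1-\varepsilon}$ for every $\varepsilon>0$; coupling this with (\ref{logilogi}) yields
\[
\bigl|\log(n_2^b n_3^a/N_1^a)\bigr|^{-1}\ll_{n_2,\varepsilon} N_1\, n_3^{1+\varepsilon}.
\]
The next step is a double truncation. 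Fixing a cutoff $K\in\mathbb{N}$, I bound the integral in (\ref{J22}) trivially by $T$ for $n_2>K$ and sum over $n_3\leq\sqrt{aT_1}/n_2^{b/a}$; since $\sum n_2^{-1/2-b/(2a)}$ converges (owing to $b>a$), the tail contributes at most $\varepsilon_1(K)T\log T$ with $\varepsilon_1(K)\to 0$ as $K\to\infty$. For $n_2\leq K$ I split the $n_3$-summation at the crossover $Y=\bigl(T/(C(n_2,\varepsilon)n_2^{b/a})\bigr)^{1/(2+\varepsilon)}$ where the Roth-derived bound matches $T$: the routine calculation in the two regimes produces a contribution of size $O(\varepsilon\,T\log T)+O_K(T)$, the key identity being that $\log(\sqrt{T}/Y)=(\varepsilon/(2(2+\varepsilon)))\log T+O_K(1)$. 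Sending $T\to\infty$, then $K\to\infty$, and finally $\varepsilon\to 0$ yields $J_{2,2}(T)=o(T\log T)$, which combined with (\ref{bJ21}) completes the first part.

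The second assertion is essentially immediate: when $a=1$, the quantity $n_2^b n_3^c$ is already an integer, so $N_1=n_2^b n_3^c$ and the condition $n_1=N_1$ directly contradicts the off-diagonal requirement $n_1\neq n_2^b n_3^c$. Consequently the summation range of $J_{2,2}(T)$ is empty, and (\ref{bJ21}) furnishes $J_2^\theta(T)=O(J_{2,1}(T))\ll Q^{1/2}T^{1/2}$ with no further argument.

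The principal obstacle is the ineffectiveness of Roth's constant $C(n_2,\varepsilon)$ and the lack of any known control on its dependence on $n_2$, which forces the qualitative $o(T\log T)$ conclusion and the layered limiting procedure in $\varepsilon$ and $K$. The use of the Roth exponent $2+\varepsilon$ rather than a fixed $2+\eta$ is equally essential, as the vanishing factor $\varepsilon\log T$ arising in the crossover computation is precisely what permits the small-$n_2$ contribution to be absorbed into $o(T\log T)$; a strictly larger fixed exponent would produce an irreducible proportion of $T\log T$ and defeat the approach.
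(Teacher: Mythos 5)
Your proof is correct and follows essentially the same route as the paper: reduce to $J_{2,2}(T)$ via (\ref{effi}) and (\ref{bJ21}), discard the perfect-$a$-th-power values of $n_{2}$, apply Roth's theorem to $n_{2}^{b/a}$ to obtain the $n_{3}^{-(1+\varepsilon)}$ lower bound on $\lVert n_{3}n_{2}^{b/a}\rVert$, split the $n_{3}$-range at the crossover where the Roth bound matches the trivial bound $T$, and then pass to the limit in $\varepsilon$ and the $n_{2}$-truncation parameter; the $a=1$ case is handled identically. One cosmetic slip: $\alpha_{n_{2}}=n_{2}^{b/a}$ is algebraic of degree dividing $a$ but not necessarily equal to $a$ (e.g.\ $n_{2}=4$, $a=4$, $b=5$ gives degree $2$), though this is immaterial since Roth's theorem requires only that $\alpha_{n_{2}}$ be an irrational algebraic number, which the coprimality $(a,b)=1$ does guarantee for non-$a$-th-power $n_{2}$.
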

\begin{proof}
We shall focus first on the instance $a>1$ and $\theta=1/2.$ It seems pertinent to define, for each tuple $(n_{2},n_{3})$ the number $N_{b,a}=[n_{2}^{b/a}n_{3}],$ and on recalling (\ref{J212}), (\ref{J22}) and (\ref{bJ21}) we observe that \begin{equation*}J_{2}(T)= J_{2,2}(T)+O(T^{3/4}).\end{equation*} In order to analyse $J_{2,2}(T)$ we note first that 
\begin{equation}\label{J22ef}J_{2,2}(T)\ll\sum_{\substack{n_{2}\leq \sqrt{bT_{1}}\\ n_{3}\leq \sqrt{aT_{1}}\\ n_{1}=N_{b,a}}}n_{2}^{-1/2-b/2a}n_{3}^{-1}\Big\lvert\int_{N_{\bfn}}^{T}e^{it\log(n_{2}^{b}n_{3}^{a}/n_{1}^{a})}dt\Big\rvert,\end{equation} wherein the above sum runs over tuples satisfying the property that $n_{2}^{b/a}n_{3}$ is not an integer. We also note that Roth's theorem on rational approximation \cite{Roth} implies that for each pair $(n_{2},n_{3})$ with the property that $n_{2}^{b/a}$ is not an integer and for every fixed $\varepsilon>0$  then the inequality
\begin{equation}\label{Cep}\Big\lvert n_{2}^{b/a}n_{3}-N_{b,a}\Big\rvert\geq \frac{C'(\varepsilon,n_{2})}{n_{3}^{1+\varepsilon}}\end{equation} holds, where $C'(\varepsilon,n_{2})>0$ only depends on $\varepsilon$ and $n_{2}$. Therefore, the above estimate in conjunction with (\ref{logilogi}) delivers the lower bound 
\begin{equation}\label{Croth}\big\lvert \log \big(n_{2}^{b}n_{3}^{a}/N_{b,a}^{a}\big)\big\rvert\geq \frac{C(\varepsilon,n_{2})}{n_{3}^{2+\varepsilon}}.\end{equation} For the purpose of organising our argument rather neatly it seems pertinent to denote $W_{1}(T)$ the contribution to $J_{2,2}(T)$ of tuples satisfying $C(\varepsilon,n_{2})^{-1}n_{3}^{2+\varepsilon}\leq T$. Likewise, we write $W_{2}(T)$ for the contribution of tuples with the property that $C(\varepsilon,n_{2})^{-1}n_{3}^{2+\varepsilon}> T.$ Then by the preceding discussion it transpires that
$$W_{1}(T)\ll \sum_{C(\varepsilon,n_{2})^{-1}n_{3}^{2+\varepsilon}\leq T}C(\varepsilon,n_{2})^{-1}n_{2}^{-1/2-b/2a}n_{3}^{1+\varepsilon}\ll T\sum_{n_{2}\leq \sqrt{bT_{1}}}n_{2}^{-1/2-b/2a}\ll T,$$ where we estimated the integral in (\ref{J22ef}) by the inverse of the corresponding logarithm and we employed (\ref{Croth}) appropiately. 

In order to make progress we find it desirable to introduce the parameter $N$ and write $$C_{\varepsilon}(N)=\min_{1\leq n_{2}\leq N}C(\varepsilon,n_{2})$$ for further convenience. We then estimate the integral on the right side of (\ref{J22ef}) by the trivial bound $T$ and thus obtain
$$W_{2}(T)\ll T\sum_{C(\varepsilon,n_{2})^{-1}n_{3}^{2+\varepsilon}> T}n_{2}^{-1/2-b/2a}n_{3}^{-1}\ll T\big(W_{2,1}(T)+W_{2,2}(T)\big),$$wherein we splitted the above sum accordingly, namely $$W_{2,1}(T)=\sum_{\substack{n_{2}\geq N\\ n_{3}\leq \sqrt{aT_{1}}}}n_{2}^{-1/2-b/2a}n_{3}^{-1}\ \ \ \ \ \ \ \text{and}\ \ \ \ \ \ \ \ W_{2,2}(T)=\sum_{(C_{\varepsilon}(N)T)^{1/(2+\varepsilon)}< n_{3}\leq \sqrt{aT_{1}}}n_{3}^{-1}.$$ 
By summing over $n_{2}$ and $n_{3}$ we obtain the bound
\begin{equation}\label{L21L21}W_{2,1}(T)\ll N^{1/2-b/2a}\log T\end{equation}
Likewise, for fixed $0<\varepsilon\leq 1$ one finds that
\begin{equation}\label{L22L22}W_{2,2}(T)\ll \varepsilon\log T+\log C_{\varepsilon}(N).\end{equation}

It is of great importance to emphasize that the implicit constants cognate to the above bounds for $W_{2,1}(T)$ and $W_{2,2}(T)$ do not depend on neither $\varepsilon$ nor $N$. We also observe that in view of the analysis of $W_{2,2}(T)$ in conjunction with a careful perusal of the underlying argument underpinning the choice of the above cutoff parameters, it transpires that the presence of the exponent $2+\varepsilon$ in (\ref{Cep}) is essential. Therefore, by the preceding discussion we obtain for any fixed $\varepsilon>0$ the estimate
$$\lim_{T\to\infty}\frac{\lvert J_{2}(T)\rvert}{T\log T}\ll \varepsilon+N^{1/2-b/2a}.$$ Consequently, letting $N\to\infty$ and $\varepsilon\to 0$ in the above line and recalling that $a<b$ we obtain the desired result. If $a=1$ and $0<\theta<1$ then upon recalling (\ref{J22}) it is apparent that $J_{2,2}(T)=0$ since $n_{2}n_{3}$ is always an integer, such an observation in conjunction with (\ref{J212}) and (\ref{bJ21}) thereby delivering the desired conclusion.
\end{proof}

Before progressing in the proof we draw the reader's attention to the estimates (\ref{L21L21}) and (\ref{L22L22}) for the purpose of emphasizing on the fact that the ineffectiveness in Roth's theorem with respect to both $\varepsilon$ and $n_{2}$ is then transferred to the ineffectiveness of the error term cognate to the asymptotic formula deduced herein. We then find it desirable to combine the above lemmata to the end of obtaining the equation
$$I_{1}(T)\sim\frac{1}{2}\zeta\big((a+b)/2\big)T\log T,$$
and note that the application of Lemmata \ref{lem604} and \ref{lem6054} then yields $$\sum_{i=2}^{5}\lvert I_{i}(T)\rvert\ll T.$$ We finally use the observation that $$\chi(1/2+ait)\chi(1/2-ait)=1$$ to deduce the equality
\begin{align}\label{Y6a}Y_{6,a,b}(T)&
=\int_{0}^{T}D(1/2-ait)D(1/2-bit)D(1/2+ait)dt=I_{1}(T),
\end{align} where $Y_{6,a,b}(T)$ was defined right before Lemma \ref{lem606}. We also employ such a lemma to obtain the estimate
$$Y_{6,b,a}(T)\ll T^{5/4-b/4a}(\log T)+T^{3/4},$$ and find it adecquate to recall for further purposes the equation $$I_{6}(T)=Y_{6,a,b}(T)+Y_{6,b,a}(T)$$presented in (\ref{I66}). The combination of the above estimates and identities then delivers the required asymptotic formula for $I_{a,b,a}(T)$ and completes the proof of Theorem \ref{thm602}.

\section{The instance $c=1$}\label{hannah}
We shift our focus to the proof of Theorem \ref{loweuse}, it being worth anticipating that additional terms that were negligible on previous ocassions stemming from the analysis of $I_{5}(T)$ contribute in this setting to the main terms in this framework, a reappraisal of those computations thereby being required herein. We recall (\ref{piresiti}) and Lemma \ref{lem604} to deduce that in this context it also follows that $$\sum_{i=2}^{4}\lvert I_{i}(T)\rvert\ll T^{3/4}.$$ The upcoming proposition shall be devoted to evaluate $I_{5}(T)$, it being pertinent to introduce first
\begin{equation}\label{nunu}\nu_{b}=\zeta\big((b+1)/2\big)\big(2\gamma-1-\log 2\pi\big)+b\zeta'\big((b+1)/2\big),\ \ \ \ \ \ \gamma_{1}=\frac{1}{2}\lim_{s\to 1}((s-1)\zeta(s))''.\end{equation} It also seems worth  defining the universal parameters
\begin{equation}\label{c1c2c3} c_{1}=3\gamma-1-\log(2\pi),\ \ \ \ \ c_{0}=3\gamma^{2}-3\gamma+3\gamma_{1}+1+(1-3\gamma)\log 2\pi+(\log 2\pi)^{2}/2,\end{equation} wherein $\gamma$ is the Euler's constant. 

\begin{lem}\label{lemota}
Let $a=c=1$ and $b>1$. Then with the above notation and recalling (\ref{I1TTT}) one has that
$$I_{5}(T)=\zeta\big((b+1)/2\big)T\log T+\nu_{b}T-2TS_{1,b}(T)+2J_{4}(T)+O\big(T^{3/4}\log T+T^{1/2+1/2b+\varepsilon}\big).$$ If on the contrary $a=b=c=1$ then it follows that
$$I_{5}(T)=\frac{T(\log T)^{2}}{2}+c_{1}T\log T+c_{0}T-3TS_{1,1}(T)+3J_{4}(T)+O\big(T^{3/4}\log T\big).$$
\end{lem}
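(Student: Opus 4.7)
The proof begins by invoking Lemma \ref{lemot}, which (for $a=c=1$, where the exponential $e(n_{1}n_{2}^{b}n_{3})\equiv 1$ and the factor $n_{3}^{(c-1)/2}$ reduces to $1$) gives $I_{5}(T)=K_{1}(T)+O(T^{3/4}\log T)$, with the error being $O(T^{3/4+\varepsilon})$ when $b=2$, where
\[
K_{1}(T)=2\pi\sum_{\bfn:\,N_{\bfn}\le c_{\bfn}\le T}n_{2}^{(b-1)/2}.
\]
For fixed $n_{2}$, the admissible pairs $(n_{1},n_{3})\in\mathbb{N}^{2}$ are those with $n_{1}n_{3}\le L_{n_{2}}:=T_{1}/n_{2}^{b}$ together with the angular constraint $n_{2}^{-b}\le n_{1}/n_{3}\le n_{2}^{b}$ (the lower bound $n_{1}n_{3}\ge n_{2}^{2-b}/b$ being automatic for $b\ge 2$). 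I would count these pairs by the Dirichlet hyperbola method: from the full divisor sum $\sum_{m\le L_{n_{2}}}d(m)=L_{n_{2}}\log L_{n_{2}}+(2\gamma-1)L_{n_{2}}+O(L_{n_{2}}^{1/2})$ one subtracts the two symmetric tails violating the angular constraint, each of the form $\sum_{k\le\sqrt{L_{n_{2}}/n_{2}^{b}}}(\lfloor L_{n_{2}}/k\rfloor-n_{2}^{b}k)$, and evaluates them via $\sum_{k\le M}\lfloor L/k\rfloor=L\log M+\gamma L+O(M)$. The $L_{n_{2}}\log L_{n_{2}}$ and $(2\gamma-1)L_{n_{2}}$ terms cancel pairwise and one arrives at
\[
W(n_{2})=bL_{n_{2}}\log n_{2}+O(L_{n_{2}}^{1/2}).
\]

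Summing $n_{2}^{(b-1)/2}W(n_{2})$ over $n_{2}$ and using $\sum_{n_{2}\ge 1}n_{2}^{-(b+1)/2}\log n_{2}=-\zeta'((b+1)/2)$ (convergent for $b>1$) then yields
\[
K_{1}(T)=-bT\zeta'((b+1)/2)+O\bigl(T^{3/4}\log T+T^{1/2+1/(2b)+\varepsilon}\bigr),
\]
where the first error comes from the tail $\sum_{n_{2}>T_{1}^{1/b}}n_{2}^{-(b+1)/2}\log n_{2}$ and the second from perimeter-type lattice errors weighted by $n_{2}^{(b-1)/2}$. The stated form of the lemma is then obtained from an algebraic identity: inserting the Euler--Maclaurin asymptotics $S_{1,b}(T)=\tfrac{1}{2}\zeta((b+1)/2)(\log T-\log 2\pi)+b\zeta'((b+1)/2)+\gamma\zeta((b+1)/2)+\cdots$ and $J_{4}(T)=\tfrac{1}{2}\zeta((b+1)/2)\,T+\cdots$ into the right-hand side of the lemma, and using the definition $\nu_{b}=\zeta((b+1)/2)(2\gamma-1-\log 2\pi)+b\zeta'((b+1)/2)$, the $T\log T$ cancels against $2TS_{1,b}(T)$ and the $T$-constants match exactly, giving
\[
\zeta((b+1)/2)T\log T+\nu_{b}T-2TS_{1,b}(T)+2J_{4}(T)=-b\zeta'((b+1)/2)\,T+\cdots,
\]
which is exactly the asymptotic of $K_{1}(T)$ modulo the stated error.

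For $a=b=c=1$, the pole of $\zeta((b+1)/2)=\zeta(1)$ forces the harmonic-log truncation $\sum_{n_{2}\le N}n_{2}^{-1}\log n_{2}=\tfrac{1}{2}(\log N)^{2}+\gamma_{1}+O(N^{-1}\log N)$ to replace the convergent evaluation, which produces the leading $\tfrac{1}{2}T(\log T)^{2}$. The Laurent expansion $\zeta(s)=(s-1)^{-1}+\gamma-\gamma_{1}(s-1)+O((s-1)^{2})$ likewise governs the expansions of $S_{1,1}(T)$ and $J_{4}(T)$, and the constants $c_{0},c_{1}$ of (\ref{c1c2c3}) emerge from assembling $\gamma$, $\gamma_{1}$, and $\log 2\pi$ (the shift coming from $T_{1}=T/(2\pi)$). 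The chief obstacle is the bookkeeping of subleading terms: verifying that the subdominant $T^{(3b+1)/(4b)}$-order contributions from the Euler--Maclaurin expansions of $S_{1,b}(T)$ and $J_{4}(T)$ combine consistently within the stated error $O(T^{1/2+1/(2b)+\varepsilon})$, and, in the case $b=1$, faithfully propagating the Laurent coefficients of $\zeta$ at $s=1$ through to the precise form of $c_{0}$ and $c_{1}$.
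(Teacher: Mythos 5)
Your reduction of $I_5(T)$ to $K_1(T)$ via Lemma~\ref{lemot} coincides with the paper's first step, but the strategy you propose afterwards has a genuine gap. The paper never produces a clean asymptotic for $K_1(T)$: it writes $K_1(T)/(2\pi)$ as the unconstrained count $\sum_{n_1 n_2^b n_3\leq T_1}n_2^{(b-1)/2}$ minus twice the angular correction $\sum_{n_2^b n_3 < n_1\leq T_1/(n_2^b n_3)}n_2^{(b-1)/2}$, and identifies the latter as \emph{exactly} $T_1 S_{1,b}(T)-\tfrac{1}{2\pi}J_4(T)$ up to a floor error of $O(1)$ per pair $(n_2,n_3)$, hence $O(T^{1/2})$ in total. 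The sums $S_{1,b}(T)$ and $J_4(T)$ are carried symbolically, which is essential: in the proof of Theorem~\ref{loweuse} they cancel exactly against the $TS_{1,b}(T)-J_4(T)$ coming from $I_1(T)$ via (\ref{I1TTT}).

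Your plan --- evaluate $K_1(T)$ cleanly as $-bT\zeta'((b+1)/2)+O(T^{3/4}\log T+T^{1/2+1/(2b)+\varepsilon})$ and then recover the lemma by inserting Euler--Maclaurin expansions of $S_{1,b}(T)$ and $J_4(T)$ --- cannot reach the stated error, for two independent reasons. First, $W(n_2)=bL_{n_2}\log n_2+O(L_{n_2}^{1/2})$ is not correct uniformly in $n_2\leq T_1^{1/b}$: for $T_1^{1/(2b)}<n_2\leq T_1^{1/b}$ one has $L_{n_2}<n_2^b$, both tails are empty (no $k\leq\sqrt{L_{n_2}/n_2^b}<1$), and $W(n_2)=L_{n_2}\log L_{n_2}+(2\gamma-1)L_{n_2}+O(L_{n_2}^{1/2})$, which differs from $bL_{n_2}\log n_2$ by terms of size $L_{n_2}\log T$; weighted by $n_2^{(b-1)/2}$ and summed, this range alone contributes $\asymp T^{(3b+1)/(4b)}\log T$. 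Even for $n_2\leq T_1^{1/(2b)}$ the tail evaluation leaves a residual of size $n_2^b$ (from the comparison of $n_2^bM(M+1)/2$ with $L_{n_2}/2$ and the dropped $L_{n_2}/(2M)$ term in your estimate for $\sum_{k\leq M}\lfloor L/k\rfloor$), again giving $\asymp T^{(3b+1)/(4b)}$ after weighted summation. Second, and decisively, the genuine second-order fluctuations of $TS_{1,b}(T)$ and of $J_4(T)$ themselves are of size $\asymp T^{(3b+1)/(4b)}\log T$, and $(3b+1)/(4b)=3/4+1/(4b)$ strictly exceeds both $3/4$ and $1/2+1/(2b)$ for every $b>1$ (it equals $7/8$ at $b=2$); thus the correct clean asymptotic is only $K_1(T)=-bT\zeta'((b+1)/2)+O(T^{(3b+1)/(4b)}\log T)$, which is strictly weaker than the lemma. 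What you defer as \emph{the chief obstacle} is therefore not a bookkeeping chore but an intrinsic obstruction: the lemma can only be proved, as the paper does, by keeping $S_{1,b}(T)$ and $J_4(T)$ as unevaluated exact pieces of $K_1(T)$, and the same objection applies verbatim to your treatment of the $b=1$ case.
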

\begin{proof}
We shall begin our discussion by focusing first on the instance $b>1$ and draw the reader's attention to the statement of Lemma \ref{lemot} to the end of observing that then, when $c=1$ one has that
$$\frac{1}{2\pi} K_{1}(T)=\mathop{{\sum_{\substack{n_{1}\leq n_{2}^{b}n_{3}\\ n_{1}n_{2}^{b}n_{3}\leq T_{1}\\ n_{3}\leq n_{2}^{b}n_{1}}}}^*}n_{2}^{b/2-1/2}=\mathop{{\sum_{\substack{n_{1}n_{2}^{b}n_{3}\leq T_{1}\\ n_{3}\leq n_{2}^{b}n_{1}}}}^*}n_{2}^{b/2-1/2}-\sum_{\substack{n_{2}^{b}n_{3}< n_{1}\leq \sqrt{T_{1}}}}n_{2}^{b/2-1/2},$$ wherein upon recalling (\ref{Babc}) the corresponding triples in the sum with * satisfy the proviso $(n_{1},n_{2},n_{3})\in \mathcal{B}_{1,b,1}$, an application of the same argument in conjunction with the underlying symmetry delivering
\begin{equation*}\label{KK1}\frac{1}{2\pi}K_{1}(T)=\sum_{\substack{n_{1}n_{2}^{b}n_{3}\leq T_{1}\\ (n_{1},n_{2},n_{3})\in \mathcal{B}_{1,b,1}}}n_{2}^{b/2-1/2}-2\sum_{\substack{n_{2}^{b}n_{3}< n_{1}\leq \sqrt{T_{1}}}}n_{2}^{b/2-1/2}.\end{equation*}

It also seems worth observing that one may rewrite the first summand in the above equation by means of 
\begin{align*}\sum_{\substack{n_{1}n_{2}^{b}n_{3}\leq T_{1}\\ (n_{1},n_{2},n_{3})\in \mathcal{B}_{1,b,1}}}n_{2}^{b/2-1/2}=&\sum_{\substack{n_{1}n_{2}^{b}n_{3}\leq T_{1}}}n_{2}^{b/2-1/2}-2\sum_{\substack{\sqrt{T_{1}}< n_{1}\leq T_{1}/(n_{2}^{b}n_{3})}}n_{2}^{b/2-1/2},\end{align*}whence combining the above equations delivers
\begin{equation}\label{KKIO}\frac{1}{2\pi}K_{1}(T)=\sum_{\substack{n_{1}n_{2}^{b}n_{3}\leq T_{1}}}n_{2}^{b/2-1/2}-2\sum_{\substack{n_{2}^{b}n_{3}< n_{1}\leq T_{1}/(n_{2}^{b}n_{3})}}n_{2}^{b/2-1/2}.\end{equation}
We find it pertinent to focus on the last summand in the previous line for the purpose of noting that then 
\begin{align*}\sum_{\substack{n_{2}^{b}n_{3}< n_{1}\leq T_{1}/(n_{2}^{b}n_{3})}}n_{2}^{b/2-1/2}=&T_{1}\sum_{\substack{n_{2}^{b}n_{3}\leq \sqrt{T_{1}}}}n_{2}^{-(b+1)/2}n_{3}^{-1}-\sum_{\substack{n_{2}^{b}n_{3}\leq \sqrt{T_{1}}}}n_{2}^{(3b-1)/2}n_{3}
\\
&+O\Big(\sum_{\substack{n_{2}^{b}n_{3}\leq \sqrt{T_{1}}}}n_{2}^{b/2-1/2}\Big).
\end{align*}
The reader may observe that a routinary argument then delivers the bound
$$\sum_{\substack{n_{2}^{b}n_{3}\leq \sqrt{T_{1}}}}n_{2}^{b/2-1/2}\ll T^{1/4+1/4b}\sum_{n_{3}\leq \sqrt{T_{1}}}n_{3}^{-1/2-1/2b}\ll T^{1/2},$$whence recalling (\ref{I1TTT}) once again to the reader and combining the preceding formulae yields
$$\sum_{\substack{n_{2}^{b}n_{3}< n_{1}\leq T_{1}/(n_{2}^{b}n_{3})}}n_{2}^{b/2-1/2}=T_{1}S_{1,b}(T)-\frac{1}{2\pi}J_{4}(T)+O(T^{1/2}).$$

The perusal of the first summand in (\ref{KKIO}) entails utilising the classical asymptotic formula for the Dirichlet problem, namely
\begin{align*}\sum_{\substack{n_{1}n_{2}^{b}n_{3}\leq T_{1}}}n_{2}^{b/2-1/2}=&\sum_{n_{2}^{b}\leq T_{1}}n_{2}^{b/2-1/2}\Big(\Big\lfloor T_{1}/n_{2}^{b}\Big\rfloor\log \Big\lfloor T_{1}/n_{2}^{b}\Big\rfloor+(2\gamma-1)\Big\lfloor T_{1}/n_{2}^{b}\Big\rfloor\Big)
\\
&+O\Big(T^{1/2}\sum_{n_{2}^{b}\leq T_{1}}n_{2}^{-1/2}\Big).
\end{align*}
It then transpires by using a routine argument that then
\begin{align*}\sum_{\substack{n_{1}n_{2}^{b}n_{3}\leq T_{1}}}n_{2}^{b/2-1/2}=&\big(T_{1}(\log T_{1}+2\gamma-1)\big)\sum_{n_{2}^{b}\leq T_{1}}n_{2}^{-(b+1)/2}-bT_{1}\sum_{n_{2}^{b}\leq T_{1}}n_{2}^{-(b+1)/2}\log n_{2}
\\
&+O\Big(\log T\sum_{n_{2}^{b}\leq T_{1}}n_{2}^{b/2-1/2}\Big)+O(T^{1/2+1/2b}),
\end{align*}
whence rearranging the terms appropiately then enables one to conclude that
$$2\pi\sum_{\substack{n_{1}n_{2}^{b}n_{3}\leq T_{1}}}n_{2}^{b/2-1/2}=\zeta\big((b+1)/2\big)T\log T+\nu_{b}T+O\big(T^{3/4}\log T\big),$$ as desired. Therefore, the combination of the preceding formulae in conjunction with Lemma \ref{lemot} yields the desired result for the instance $b>1$.

We shift our attention to the case $b=1$ and employ a similar argument to that utilised for the former case. We thus draw the reader's attention to the statement of Lemma \ref{lemot} and observe that then
$$\frac{1}{2\pi}K_{1}(T)=\mathop{{\sum_{\substack{n_{1}n_{2}n_{3}\leq T_{1}\\ n_{1}\leq n_{2}n_{3}\\  n_{3}\leq n_{2}n_{1}\\ n_{2}\leq n_{1}n_{3}}}}^*}1=\mathop{{\sum_{\substack{n_{1}n_{2}n_{3}\leq T_{1}\\ n_{3}\leq n_{2}n_{1}\\ n_{2}\leq n_{1}n_{3}}}}^*}1-\sum_{\substack{n_{2}n_{3}< n_{1}\leq \sqrt{T_{1}}}}1,$$ wherein we omitted upon recalling (\ref{Babc}) writing the proviso $(n_{1},n_{2},n_{3})\in \mathcal{B}_{1,1,1}$, an iteration of the same argument combined with the underlying symmetry delivering
\begin{equation*}\label{KK1w}\frac{1}{2\pi}K_{1}(T)=\sum_{\substack{n_{1}n_{2}n_{3}\leq T_{1}\\ (n_{1},n_{2},n_{3})\in \mathcal{B}_{1,1,1}}}1-3\sum_{\substack{n_{2}n_{3}< n_{1}\leq \sqrt{T_{1}}}}1.\end{equation*}
It also seems worth noting as is customary that one may rewrite the first summand in the above equation by means of the expressions
\begin{align*}\sum_{\substack{n_{1}n_{2}n_{3}\leq T_{1}\\ (n_{1},n_{2},n_{3})\in \mathcal{B}_{1,1,1}}}1=&\sum_{\substack{n_{1}n_{2}n_{3}\leq T_{1}}}1-3\sum_{\substack{\sqrt{T_{1}}< n_{1}\leq T_{1}/(n_{2}n_{3})}}1,\end{align*}whence combining the above equations delivers
\begin{equation*}\label{KKIOP}\frac{1}{2\pi}K_{1}(T)=\sum_{\substack{n_{1}n_{2}n_{3}\leq T_{1}}}1-3\sum_{\substack{n_{2}n_{3}< n_{1}\leq T_{1}/(n_{2}n_{3})}}1.\end{equation*}The same argument utilised in the discussion cognate to the instance $b>1$ enables one to deduce upon recalling (\ref{I1TTT}) the formula
$$\sum_{\substack{n_{2}n_{3}< n_{1}\leq T_{1}/(n_{2}n_{3})}}1=T_{1}S_{1,1}(T)-\frac{1}{2\pi}J_{4}(T)+O\big(T^{1/2}\log T\big).$$ 

To complete the proof it just remains to observe that $$\sum_{\substack{n_{1}n_{2}n_{3}\leq T_{1}}}1=\sum_{n\leq T_{1}}d_{3}(n),$$wherein $d_{3}(n)$ denotes the number of ways of writing $n$ as a product of three positive integers, whence an application of Kolesnik \cite{Kol} yields
$$\sum_{\substack{n_{1}n_{2}n_{3}\leq T_{1}}}1=\frac{1}{2}T_{1}(\log T_{1})^{2}+(3\gamma-1)\log T_{1}+(3\gamma^{2}-3\gamma+3\gamma_{1}+1)T_{1}+O(T^{43/96+\varepsilon}).$$
The preceding discussion then in conjunction with Lemma \ref{lemot} delivers the desired result.
\end{proof}

The last lines of this section shall be devoted to combine the above lemmata to the end of deducing Theorem \ref{loweuse}. We employ first Lemma \ref{lem606} when $b>1$ to derive the estimate $$Y_{6,b,1}(T)\ll T^{3/4}\log T,$$ 
and combine the above discussion with the observation (\ref{Y6a}) and Lemma \ref{lem604} to obtain 
\begin{align*}I_{1,b,1}(T)=&2I_{1}(T)+\zeta\big((b+1)/2\big)T\log T+\nu_{b}T-2TS_{1,b}(T)+2J_{4}(T)
\\
&+O\big(T^{3/4}\log T+T^{1/2+1/2b+\varepsilon}\big),
\end{align*} the application of Lemma \ref{rothi} for the choice $\theta=1/2$ in conjunction with (\ref{I1TTT}) and the preceding line thus delivering the result in Theorem \ref{loweuse} concerning the instance $b>1$. We shift our attention to the case $b=1$ and recall (\ref{I66}) for the purpose of observing that then 
$$I_{6}(T)=2Y_{1,1,1}(T)=2I_{1}(T),$$ whence the preceding discussion combined with equation (\ref{piresiti}) and Lemmata \ref{lem604} and \ref{lemota} assures the validity of the asymptotic formula \begin{align*}I_{1,1,1}(T)=&3I_{1}(T)+\frac{T(\log T)^{2}}{2}+c_{1}T\log T+c_{0}T-3TS_{1,1}(T)+3J_{4}(T)
\\
&+O\big(T^{3/4}\log T\big).\end{align*} The above equation then combined with (\ref{I1TTT}) for the choice $\theta=1/2$ and Lemma \ref{rothi} yields the desired conclusion and completes the proof of Theorem \ref{loweuse}.

\end{document}